 \def\NZQ{\mathbb}               
 \def\NN{{\NZQ N}}
 \def\ZZ{{\NZQ Z}}
 \def\frk{\mathfrak}               
 \def\mm{{\frk m}}
 \def\nn{{\frk n}}
 \def\opn#1#2{\def#1{\operatorname{#2}}} 
 \opn\chara{char} \opn\length{\ell} \opn\pd{pd} \opn\rk{rk}
 \opn\projdim{proj\,dim} \opn\injdim{inj\,dim} \opn\rank{rank}
 \opn\depth{depth} \opn\grade{grade} \opn\height{height}
 \opn\embdim{emb\,dim} \opn\codim{codim}
 \opn\Tr{Tr} \opn\bigrank{big\,rank}
 \opn\superheight{superheight}\opn\lcm{lcm}
 \opn\trdeg{tr\,deg}
 \opn\reg{reg} \opn\lreg{lreg} \opn\ini{in} \opn\lpd{lpd}
 \opn\size{size} \opn\sdepth{sdepth}
 \opn\link{link}\opn\fdepth{fdepth}\opn\lex{lex}
 \opn\div{div} \opn\Div{Div} \opn\cl{cl} \opn\Cl{Cl}
 \opn\Spec{Spec} \opn\Supp{Supp} \opn\supp{supp} \opn\Sing{Sing}
 \opn\Ass{Ass} \opn\Min{Min}\opn\Mon{Mon}
 \opn\Ann{Ann} \opn\Rad{Rad} \opn\Soc{Soc}
 \opn\Im{Im} \opn\Ker{Ker} \opn\Coker{Coker} \opn\Am{Am}
 \opn\Hom{Hom} \opn\Tor{Tor} \opn\Ext{Ext} \opn\End{End}
 \opn\Aut{Aut} \opn\id{id}
 \opn\nat{nat}
 \opn\pff{pf}
 \opn\Pf{Pf} \opn\GL{GL} \opn\SL{SL} \opn\mod{mod} \opn\ord{ord}
 \opn\Gin{Gin} \opn\Hilb{Hilb}\opn\sort{sort}
 \opn\aff{aff} \opn
\opn\relint{relint} \opn\st{st}
 \opn\lk{lk} \opn\cn{cn} \opn\core{core} \opn\vol{vol}  \opn\inp{inp} \opn\nilpot{nilpot}
 \opn\link{link} \opn\star{star}\opn\lex{lex}\opn\set{set}
 \opn\width{wd}
 \opn\gr{gr}
 \def\pot#1#2{#1[\kern-0.28ex[#2]\kern-0.28ex]}
 \opn\dirlim{\underrightarrow{\lim}}
 \opn\inivlim{\underleftarrow{\lim}}
 \let\to=\rightarrow
 \def\Implies{\ifmmode\Longrightarrow \else
         \unskip${}\Longrightarrow{}$\ignorespaces\fi}
 \def\implies{\ifmmode\Rightarrow \else
         \unskip${}\Rightarrow{}$\ignorespaces\fi}
 \def\iff{\ifmmode\Longleftrightarrow \else
         \unskip${}\Longleftrightarrow{}$\ignorespaces\fi}
 \newtheorem{Theorem}{Theorem}[section]
 \newtheorem{Lemma}[Theorem]{Lemma}
 \newtheorem{Corollary}[Theorem]{Corollary}
 \newtheorem{Proposition}[Theorem]{Proposition}
 \newtheorem{Remark}[Theorem]{Remark}
 \newtheorem{Example}[Theorem]{Example}
 \newtheorem{Definition}[Theorem]{Definition}
 \newtheorem{Question}[Theorem]{Question}
 \let\epsilon\varepsilon
 \let\kappa=\varkappa
 \def\qed{\ifhmode\textqed\fi
       \ifmmode\ifinner\quad\qedsymbol\else\dispqed\fi\fi}
 \def\textqed{\unskip\nobreak\penalty50
        \hskip2em\hbox{}\nobreak\hfil\qedsymbol
        \parfillskip=0pt \finalhyphendemerits=0}
 \def\dispqed{\rlap{\qquad\qedsymbol}}
 \opn\dis{dis}
 \def\pnt{{\raise0.5mm\hbox{\large\bf.}}}
 \opn\Lex{Lex}
\begin{document}
\title {Quadratic numerical semigroups and the Koszul property}

\author {J\"urgen Herzog, Dumitru I.\ Stamate}

\address{J\"urgen Herzog, Fachbereich Mathematik, Universit\"at Duisburg-Essen, Campus Essen, 45117
Essen, Germany} \email{juergen.herzog@uni-essen.de}

\address{Dumitru I. Stamate, Faculty of Mathematics and Computer Science, University of Bucharest, Str. Academiei 14, Bucharest, Romania, and  \newline  \indent
Simion Stoilow Institute of Mathematics of the Romanian Academy, Research group
of the project PN-II-RU-PD-2012-3-0656, P.O.Box 1-764, Bucharest 014700, Romania}
\email{dumitru.stamate@fmi.unibuc.ro}

\dedicatory{ }

\begin{abstract}
Let $H$ be a numerical semigroup. We give effective bounds for the multiplicity $e(H)$ when the associated graded ring $\gr_\mm K[H]$ is defined by quadrics.
We classify Koszul complete intersection semigroups in terms of gluings.
Furthermore, for several classes of numerical semigroups considered in the literature
(arithmetic, compound, special almost complete intersections, $3$-semigroups, symmetric or pseudo-symmetric $4$-semigroups) we classify those which are Koszul.
\end{abstract}

\thanks{}
\subjclass[2010]{Primary 13A30, 16S37, 16S36; Secondary 13C40, 13H10, 13P10}

\keywords{Koszul ring, quadratic ring, numerical semigroup, tangent cone, complete intersection, gluing, standard basis, arithmetic sequence, symmetric and pseudo-symmetric semigroups}

\maketitle

\section*{Introduction}

Let $K$ be a field. A standard graded $K$-algebra $R$ with graded maximal ideal $\mm$ is called {\em Koszul} if the $R$-module $K\cong R/\mm$
has an $R$-linear resolution. It is known that if $I$, the defining ideal of $R$ has a Gr\"obner basis of quadrics, then $R$ is Koszul,
and also that if $R$ is Koszul, then $I$ is generated by quadrics.
Although it is in general difficult to certify that an algebra is Koszul, the   properties of this class of rings make  it an interesting  endeavour.
We refer to  the survey articles \cite{Froberg-fez}  and \cite{CdNR} for more details.

Due to the promise of a rich theory, it is of interest to study the Koszul property for a larger class of rings. Inspired by an idea of Fr\"oberg \cite{Fro},
in \cite{HRW} the first author, Reiner and Welker consider the Koszul property for the associated graded ring of an affine semigroup ring,
with respect to the maximal multigraded ideal.  For instance, it is proved that for a $2$-dimensional normal affine semigroup ring  its associated graded ring is Koszul,
see \cite[Proposition 5.3]{HRW}.

In this paper we focus on  the case of $1$-dimensional affine semigroup rings, i.e. those coming from numerical semigroups.
Recall that a  numerical semigroup $H$ is  a subset of the nonnegative integers that is closed under addition, contains $0$ and $\NN \setminus H$ is finite or,
equivalently, the gcd of all elements in $H$ equals $1$.
We denote by $G(H)$ the unique minimal system of generators for $H$. The multiplicity and the embedding dimension of $H$
are defined as $e(H)= \min G(H)$ and $\embdim (H)= |G(H)|$, respectively.
If $n= \embdim (H)$ we   say that $H$ is an $n$-semigroup. We denote $K[H]=\oplus_{h\in H} Kt^h \subset K[t]$ the semigroup ring associated to $H$.

The tangent cone of $K[H]$ is the associated graded ring $\gr_\mm K[H]=\oplus_{i\geq 0} \mm^i/\mm^{i+1}$  with respect to the maximal ideal $\mm=(t^h: h \in H, h \neq 0)K[H]$.

If $G(H)= \{ a_1, \dots, a_n \}$, the toric ideal $I_H$ is defined as the kernel of the $K$-algebra map 
$\phi: K[x_1, \dots, x_n] \to K[H]$ letting $\phi(x_i)=t^{a_i}$ for $i=1, \dots, n$.
It is known  that $I_H$ is generated by the binomials $f=\prod_{i=1}^n x_i ^{\alpha_i}- \prod_{i=1}^n x_i^{\beta_i}$
where $\alpha_i, \beta_i \geq 0$ for all $i=1,\dots, n$ and $\sum_{i=1}^{n} \alpha_i a_i= \sum_{i=1}^{n} \beta_i a_i$.
It is enough to use only such binomials where $\alpha_i \beta_i=0$ for all $i=1, \dots, n$.

For a nonzero polynomial $f$   its {\em initial form} $f^*$ is the homogeneous component of $f$ of least degree and the initial degree of $f$ is defined as $\deg f^*$.
For an ideal $I$ we let $I^*= (f^*: f\in I, f\neq 0)$. A {\em standard basis} for $I $ is a set of polynomials in $I $ whose initial forms generate $I^*$.
It is known and easy to see that a standard basis is also a generating set for $I$.

With this notation, one can check that  $\gr_\mm K[H] \cong K[x_1,\dots, x_n]/I^*_H$.
From the algorithms that may be used to compute  $I_H^*$ (see \cite{Eis} or \cite{EH}) one gets that $I_H^*$
is generated by  monomials and possibly homogeneous binomials.

\medskip

We are interested in numerical semigroups $H$ such that $\gr_\mm K[H]$ is Koszul.
In general, even if $I_H^*$ is quadratic, the   tangent cone $R=\gr_\mm K[H]$ may not be Koszul. 
For instance, one can check with Singular (\cite{Sing}) that for $H= \langle 12,14, 15, 16, 18, 19 \rangle$,
  $I_H^*$ is generated in degree 2 and $\beta^{R}_{4,5}(K) =1$, hence the resolution of $K$ over $R$ is not linear.

We say that $H$ is a {\em Koszul, quadratic}, or {\em G-quadratic semigroup}
if $\gr_\mm K[H]$ is a Koszul ring, respectively $I_H^*$ is generated in degree 2 or
(possibly after a suitable change of coordinates) it has a Gr\"obner basis of quadrics with respect to some term order.
Note that by  \cite{RS}, the quadratic property  depends only on the generators  of the semigroup and it  does not depend  on the field $K$.
When discussing the Koszul property of $H$ we work over a fixed field $K$, although we do not know of any semigroup $H$ where the Koszul property depends on the field of coefficients.

For some of our arguments to work we need to assume that the field $K$ is infinite.

The ideal $I$  is called a {\em complete intersection ideal} (CI for short) if it is minimally generated by $\height I$ elements.
In case $\mu(I)=1+\height I$, one says that $I$ is an {\em almost complete intersection ideal}.
We say that a numerical semigroup $H$ is an (almost) complete intersection if $I_H$ has that property. Note that in general, if
$I_H$ is an (almost) complete intersection ideal, that property may no longer hold for $I^*_H$. 
However, if $I_H^*$ is generated in degree $2$, we prove in Lemma \ref{lemma:quad-basis} that  $I_H$ is almost CI if and only if $I_H^*$ is of the same kind.

\medskip

Let $n=\embdim(H)$. It is easy to see that  $n\leq e(H)$.  In Section \ref{sec:bounds}
we show that if $H$ is quadratic, there is also an upper bound, namely $e(H)\leq 2^{n-1}$.
It is shown that if either one of these bounds is reached, then $H$ is a Koszul semigroup.
The upper bound is reached if and only if $I_H^*$ is generated by a regular sequence of quadrics.
These results are valid more generally for $1$-dimensional Cohen-Macaulay local rings with infinite residue  field, and our proofs are given in this generality.

Numerical experiments with Singular (\cite{Sing}) make us believe that not all the values in the interval $[n, 2^{n-1}]$
are possible for the multiplicity of a quadratic semigroup $H$.
If $\gr_\mm K[H]$ is not a complete intersection, under the extra assumption that $\gr_\mm K[H]$ is Cohen-Macaulay,
in Theorem \ref{thm:forbidden} we prove that
$e(H) \leq 2^{n-1}-2^{n-3}$, and if equality holds then $H$ is $G$-quadratic and it is an almost complete intersection semigroup.

Interesting classes of semigroups arise from semigroups of smaller embedding dimension by the so-called {\em gluing} construction.
If $H_1, H_2$ are numerical semigroups and $c_1 , c_2$ are coprime integers such that
$c_1 \in H_2\setminus G(H_2)$ and $c_2\in H_1\setminus G(H_1)$,  we say that
$H=\langle c_1H_1, c_2 H_2\rangle$ is obtained by gluing $H_1$ and $H_2$.
The most prominent result in this direction is Delorme's characterization of complete intersection numerical semigroups (\cite{Delorme}). Namely,
any such semigroup is obtained by a sequence of gluings starting from $\NN$, see Theorem \ref{thm:Delorme}.

If $c_1=2$ and $H_2=\NN$ we say that $H= \langle 2 H_1,  c_2  \rangle$ is obtained from $H_1$ by a  {\em quadratic gluing}.
As a main result  of Section \ref{sec:gluing} we complement Delorme's theorem by showing that any quadratic complete intersection
numerical semigroup is obtained by a sequence of quadratic gluings, see Theorem~\ref{thm:quad-ci}.
This result is a consequence of Theorem \ref{thm:glue-quadratic} and Corollary \ref{cor:glue-2} where we show that
the semigroup $H=\langle 2 L, \ell \rangle$ is quadratic, Koszul or G-quadratic if and only if $L$ has the respective property.

In Section \ref{sec:examples} we apply the methods described so far to study the occurrence of quadratic or Koszul members
in several important families of numerical semigroups for which the defining equations of the toric ring are  better understood.
In Propositions \ref{prop:arithmetic} and \ref{prop:compound} we show that
the multiplicity of a quadratic semigroup generated by an arithmetic, respectively by a geometric sequence is either very small, compared to the embedding dimension,
or, respectively, as large as it is allowed by Theorem \ref{thm:bounds}.

This extremal property resembles another extremal behaviour for these classes of  semigroups. When $H$ is generated by a geometric sequence,
the Betti numbers in the resolution of the tangent cone $\gr_\mm K[H]$ are the smallest possible fixing the embedding dimension
(because now  $\gr_\mm K[H]$ is a complete intersection, see Proposition \ref{prop:compound}).
In previous joint work we conjectured that for a given width of $H$,
the largest Betti numbers for   $\gr_\mm K[H]$ are obtained by some arithmetic sequences, see \cite[Conjecture 2.1]{HeS}.

In the rest of Section \ref{sec:examples} we describe completely the quadratic $3$-generated and the quadratic symmetric or pseudo-symmetric $4$-semigroups.
We refer to Subsection \ref{ssec:symmetric} for the exact definitions. It is worth mentioning that these quadratic semigroups
are also G-quadratic.

\section{Bounds for the multiplicity}
\label{sec:bounds}

In this section we present some restrictions  for the multiplicity of a  quadratic numerical semigroup.

\begin{Theorem}
\label{thm:bounds}
Let $H$ be a quadratic numerical semigroup minimally generated by $n>1$  elements, and let  $K[H]$ be  its semigroup ring.  Then
\begin{enumerate}
\item[{\em(a)}] $n\leq e(H)\leq 2^{n-1}$;
\item[{\em (b)}] $e(H)=n$  \iff  $I^*_H$ has a linear resolution;
\item[{\em (c)}] $e(H)=2^{n-1}$ \iff  $I^*_H$ is a CI ideal \iff  $I_H$ is a CI ideal.
\end{enumerate}
\end{Theorem}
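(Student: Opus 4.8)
The plan is to prove (a), (b), and the first equivalence of (c) at the level of an arbitrary one-dimensional Cohen--Macaulay local ring $A$ with infinite residue field $K$ (extending the field changes none of the invariants), embedding dimension $n>1$, and quadratic tangent cone $R=\gr_\mm A\cong S/I$ ($S=K[x_1,\dots,x_n]$, $I$ generated in degree $2$); in the semigroup case $A=K[H]_\mm$, $I=I_H^*$, and $e(A)=e(H)$. Fix a superficial element $x\in\mm\setminus\mm^2$ and a generic linear form $z\in S_1$. I will use that $e(A)=e(R)$ and, for the filter-regular $z$, the identity $H_{R/zR}(t)=(1-t)H_R(t)+t\,H_{(0:_Rz)}(t)$; since $(1-t)H_R(t)=h(t)$ is the $h$-polynomial and $(0:_Rz)$ has finite length, evaluating at $t=1$ gives $e(A)=\length(R/zR)-\length(0:_Rz)$. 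The lower bound in (a) follows from $e(A)=\length(A/xA)\ge\length\big(A/(\mm^2+xA)\big)=1+\dim_K\mm/(\mm^2+xA)=n$. For the upper bound, note $\bar R:=R/zR\cong S'/\bar I$ is a \emph{quadratic Artinian} algebra in $n-1$ variables ($S'=S/zS$), with $\bar I$ of height $n-1=\dim S'$; hence $n-1$ general $K$-linear combinations $q_1,\dots,q_{n-1}$ of a degree-$2$ generating set of $\bar I$ form a minimal reduction of $\bar I$, thus a system of parameters, thus a regular sequence of quadrics (as $S'$ is Cohen--Macaulay). Then $S'/(q_1,\dots,q_{n-1})$ has Hilbert series $(1+t)^{n-1}$ and surjects onto $\bar R$, so $\length\bar R\le 2^{n-1}$ and $e(A)\le\length\bar R\le 2^{n-1}$.

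For (b): if $I$ has a linear resolution then $\reg R=1$, hence $\reg\bar R\le\reg R=1$, and since $\bar R$ is Artinian this means $\bar R_i=0$ for $i\ge2$, so $\length\bar R=1+(n-1)=n$ and $e(A)=n-\length(0:_Rz)\le n$; with the lower bound, $e(A)=n$. Conversely, $e(A)=n$ forces equality $\length(A/xA)=\length\big(A/(\mm^2+xA)\big)$, i.e.\ $\mm^2\subseteq xA$, and as $x\notin\mm^2$ this gives $\mm^2=x\mm$, so $A$ has maximal embedding dimension. By the classical theorem on rings of maximal embedding dimension (Sally) $R$ is then Cohen--Macaulay with $h$-vector $(1,n-1)$, so $\reg R=\deg h(t)=1$; since $I$ is generated in the single degree $2$, this is equivalent to $I$ having a $2$-linear resolution.

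For (c): if $I$ is a complete intersection then, being generated by quadrics, it is a regular sequence of $n-1$ quadrics, whence $H_R(t)=(1+t)^{n-1}/(1-t)$ and $e(A)=2^{n-1}$. Conversely, if $e(A)=2^{n-1}$ then equality holds in $e(A)\le\length\bar R\le 2^{n-1}$, so $0:_Rz=0$ (hence $R$ is Cohen--Macaulay) and the surjection $S'/(q_1,\dots,q_{n-1})\twoheadrightarrow\bar R$ of algebras of equal finite length is an isomorphism; lifting $q_1,\dots,q_{n-1}$ to the corresponding $K$-linear combinations $G_1,\dots,G_{n-1}\in I$ of the quadratic generators, the $G_i$ form a regular sequence in $S$ (they cut out $\bar R$ modulo $z$) with $e\big(S/(G_1,\dots,G_{n-1})\big)=2^{n-1}=e(R)$; as $S/(G_1,\dots,G_{n-1})$ is Cohen--Macaulay of dimension one it has no nonzero finite-length submodule, so a nonzero kernel of $S/(G_1,\dots,G_{n-1})\twoheadrightarrow R$ would be one-dimensional and lower the multiplicity --- impossible --- hence $I=(G_1,\dots,G_{n-1})$ is a complete intersection. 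Thus $e(H)=2^{n-1}\iff I_H^*$ is a complete intersection. Next, $I_H^*$ CI $\Rightarrow I_H$ CI: lifting a minimal generating set of $I_H^*$ (chosen among initial forms) to a standard basis of $I_H$ gives $\mu(I_H)\le\mu(I_H^*)=n-1\le\height I_H\le\mu(I_H)$. Finally, for $I_H$ CI $\Rightarrow e(H)=2^{n-1}$ I would invoke Delorme's theorem: writing $H=\langle c_1H_1,c_2H_2\rangle$ as a gluing of complete intersection semigroups with $\embdim H_i=n_i$, $n_1+n_2=n$, and using that $c_j$ is a positive non-generator of the opposite semigroup so $c_1\ge 2e(H_2)$ and $c_2\ge 2e(H_1)$, an induction on $n$ (base $H=\NN$) yields $e(H)=\min(c_1e(H_1),c_2e(H_2))\ge 2e(H_1)e(H_2)\ge 2^{n-1}$; combined with the upper bound of (a), $e(H)=2^{n-1}$.

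The steps I expect to be delicate are the upper bound of (a) --- really the assertion that a quadratic Artinian algebra in $m$ variables has length at most $2^m$, which I would extract from the "general regular sequence of quadrics inside the ideal" argument above --- and the lifting in (c), where one must verify both that the lifted quadrics $G_i$ stay a regular sequence and that Cohen--Macaulayness forces the kernel to vanish; part (b) leans on Sally's classical description of rings of maximal embedding dimension, and the only external input is Delorme's classification of complete intersection numerical semigroups, used solely for the implication $I_H$ CI $\Rightarrow e(H)=2^{n-1}$.
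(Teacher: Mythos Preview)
Your proof is correct. Parts (a), (b), and the equivalence $e(H)=2^{n-1}\iff I_H^*$ CI follow the same route as the paper: pass to $\bar R=R/zR$ for a filter-regular linear form, bound $\ell(\bar R)$ via a regular sequence of quadrics inside $\bar I$, and for (b) invoke Sally's theorem together with the regularity comparison $\reg\bar R\le\reg R$. Your lifting argument for $e(H)=2^{n-1}\Rightarrow I_H^*$ CI (using additivity of multiplicity to kill the kernel) is slightly more laborious than the paper's --- once you know $z$ is regular on $R$ you get $I/zI\cong\bar I$, hence $\mu(I)=\mu(\bar I)=n-1$ immediately --- but it works.

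The one place you genuinely diverge is the implication $I_H$ CI $\Rightarrow e(H)=2^{n-1}$. The paper handles this via an elementary lemma (Lemma~\ref{lemma:quad-basis}): when $I^*$ is generated in degree~$2$, a minimal generating set of $I$ is automatically a minimal standard basis, so $\mu(I_H)=\mu(I_H^*)$ and the CI property transfers directly in both directions. Your argument instead appeals to Delorme's classification and proves inductively that \emph{every} CI numerical semigroup, quadratic or not, satisfies $e(H)\ge 2^{\embdim H-1}$; combined with the upper bound in (a) this forces equality. Your route is heavier machinery (Delorme's theorem is a nontrivial structural result) but it yields a pleasant byproduct --- the inequality $e(H)\ge 2^{n-1}$ for arbitrary CI semigroups --- that the paper's local-algebra argument does not see. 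The paper's route, by contrast, is self-contained and works uniformly for any $1$-dimensional Cohen--Macaulay local ring with quadratic tangent cone, not just for numerical semigroup rings.
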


More generally, this theorem, formulated for semigroup rings, is true for any $1$-dimensional local Cohen-Macaulay ring $(A,\mm)$ with  a presentation
$A=B/I$ where $(B,\nn)$ is a  regular local ring with infinite residue field $S/\nn$, and where $I\subseteq \nn^2$.
The next sequence of propositions shows this result in this generality.

Let $\widehat{K[H]}$ be the local ring obtained as the $\mm$-adic completion of $K[H]$.
Theorem ~\ref{thm:bounds} follows  from   the fact  that $\gr_\mm K[H] \cong \gr_\mm \widehat{K[H]}$
and  $e(H)$ coincides with the multiplicity of $\widehat{K[H]}$.

Let $R=\gr_\mm A$.  Then $R\cong  S/I^*$, where $S=\gr_\nn B$ is a polynomial ring and $I^*$ is the ideal of initial forms of  $I$.
We say that $A$ is {\em quadratic} if $I^*$ is generated by quadrics.

\begin{Proposition}
\label{prop:local-ineq}
If $A$ is quadratic, then
$$
\embdim A \leq e(A) \leq 2^{\embdim A -1}.
$$
If $e(A)=\embdim(A)$ we say that $A$ has {\em minimal multiplicity}.
\end{Proposition}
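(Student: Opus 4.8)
The plan is to pass to the associated graded ring $R=\gr_\mm A\iso S/I^*$ and to produce a homogeneous complete intersection ``between'' $S$ and $R$ that controls the multiplicity. Write $n=\embdim A$; then $S$ is a polynomial ring in $n$ variables over the infinite field $k=A/\mm=B/\nn$, the quadratic hypothesis says that $I^*$ is generated by elements of degree $2$, and $I^*\subseteq S_+^2$ is a proper ideal because $I\subseteq\nn^2$. Since $\gr_\mm A$ and $A$ have the same Krull dimension we have $\dim R=1$, hence $\height I^*=n-1$ (the ring $S$ being Cohen--Macaulay), and $e(A)=e(R)$, where the right-hand side denotes the multiplicity of the standard graded $k$-algebra $R$; these two identifications are routine.

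For the lower bound $\embdim A\le e(A)$ one may invoke Abhyankar's inequality $e(A)\ge\embdim A-\dim A+1$ for Cohen--Macaulay local rings, but it is just as quick to argue directly: since $k$ is infinite there is a general element $x\in\mm\setminus\mm^2$ such that $xA$ is a minimal reduction of $\mm$ and $x$ is a nonzerodivisor on $A$ (as $\depth A=\dim A=1$), so $\bar A=A/xA$ is an Artinian local ring with $\length\bar A=e(A)$ and $\dim_k\bar\mm/\bar\mm^2=\dim_k\mm/(\mm^2+xA)=n-1$; filtering $\bar A$ by powers of $\bar\mm$ gives $e(A)=\length\bar A\ge 1+(n-1)=n$.

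For the upper bound we use that $I^*$ contains a regular sequence of $n-1$ quadrics, which we extract one at a time. Having found $q_1,\dots,q_i\in (I^*)_2$ with $\height(q_1,\dots,q_i)=i<n-1$, each minimal prime $\pp$ of $(q_1,\dots,q_i)$ has height $i<\height I^*$, so $I^*\not\subseteq\pp$, and hence the $k$-vector space $(I^*)_2$ (which generates $I^*$) is not contained in $\pp$; as there are only finitely many such $\pp$ and $k$ is infinite, $(I^*)_2$ is not the union of the proper subspaces $(I^*)_2\cap\pp$, so some $q_{i+1}\in(I^*)_2$ lies outside all of them and $\height(q_1,\dots,q_{i+1})=i+1$. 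In the Cohen--Macaulay ring $S$ this makes $q_1,\dots,q_{n-1}$ a regular sequence. Set $J=(q_1,\dots,q_{n-1})\subseteq I^*$. Then $S/J$ is a homogeneous complete intersection of dimension $1$ with Hilbert series $(1-t^2)^{n-1}/(1-t)^n=(1+t)^{n-1}/(1-t)$, whence $e(S/J)=2^{n-1}$. Finally, the exact sequence $0\to I^*/J\to S/J\to R\to 0$ of finitely generated graded modules, together with the additivity of multiplicity (both $S/J$ and $R$ have dimension $1$, so $\dim I^*/J\le 1$), yields $e(R)\le e(S/J)=2^{n-1}$, i.e. $e(A)=e(R)\le 2^{n-1}$.

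The one genuinely delicate point is the extraction of a regular sequence of quadrics from $I^*$: this is exactly where infiniteness of the residue field is used, via the elementary fact that a vector space over an infinite field is not a finite union of proper subspaces. The remaining ingredients --- the identifications $e(A)=e(R)$ and $\dim R=1$, the Hilbert series of a quadratic complete intersection, and additivity of multiplicity along short exact sequences --- are all standard.
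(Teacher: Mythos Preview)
Your proof is correct. For the lower bound you essentially reprove Abhyankar's inequality in the $1$-dimensional case, which the paper simply cites. For the upper bound the two arguments diverge: the paper first passes to the Artinian quotient $\bar R=R/(x)$ for a linear form $x\in R_1$ with $\ell(0:_R x)<\infty$, shows $e(R)\le e(\bar R)$ via Hilbert series, and only then extracts a regular sequence of $n-1$ quadrics from the image of $I^*$ inside the $(n-1)$-variable polynomial ring $\bar S$; the inequality $e(\bar R)\le 2^{n-1}$ is then just a comparison of lengths of Artinian rings coming from an ideal containment. You instead extract the regular sequence of quadrics directly inside $S$ (using $\height I^*=n-1$), form the $1$-dimensional complete intersection $S/J$, and compare $e(R)$ to $e(S/J)=2^{n-1}$ via additivity of multiplicity along $0\to I^*/J\to S/J\to R\to 0$. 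Your route is a bit more streamlined in that it avoids the filter-regular reduction step; the paper's route has the minor advantage that the final inequality is a transparent length comparison rather than an appeal to additivity of multiplicity.
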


\begin{proof}
Since  $A$ is Cohen-Macaulay and its residue field $K=A/\mm$ is infinite, a classical result of Abhyankar \cite{A} gives that  $e(A) \geq \embdim (A) - \dim A +1= \embdim(A)$.

Let $n=\embdim(R)$.
Since $K$ is infinite, there exists  $x \in R_1$ such that $\ell (0:x) < \infty$, see  \cite[Lemma 4.3.1]{HH-monomials}. Denote $\bar{R}=R/(x)$.
Then $H_R(t)= Q(t)/(1-t)$ with $Q(1) =e(R)$. From the exact sequence
\begin{eqnarray*}
0  \rightarrow  (0:_R x)  \rightarrow R(-d) \stackrel{x}{\rightarrow}  R \rightarrow \bar{R} \rightarrow 0
\end{eqnarray*}
we obtain that
$$
H_{\bar{R}}(t)= (1-t)H_R(t) + H_L(t)= Q(t)+ H_L(t).
 $$
This yields
\begin{equation}
\label{eq:rbar}
e(R) = Q(1) \leq Q(1)+H_L(1) =H_{\bar{R}}(1)=    e(\bar{R}).
\end{equation}

Since $R$ is quadratic, we get that $\bar{R} \cong \bar{S}/J$, where $\bar{S}$ is a polynomial ring in $n-1$
variables and where $J$ is generated by quadrics.
As $K$ is infinite, $J$ contains a regular sequence  $q_1,\dots, q_{n-1}$ of quadrics.
It follows that
\begin{equation}
\label{eq:q-ci}
e(R) \leq e(\bar{R}) \leq e(\bar{S}/(q_1, \dots, q_{n-1}))=2^{n-1}.
\end{equation}
\end{proof}

\begin{Proposition}
\label{prop:linear}
The local ring $A$ has minimal multiplicity if and only if  $I^*$ has a    $2$-linear $S$-resolution.
\end{Proposition}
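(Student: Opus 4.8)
The plan is to translate ``minimal multiplicity'' into a statement about the Hilbert series of $R=\gr_\mm A=S/I^*$, using that for a $1$-dimensional Cohen--Macaulay local ring with infinite residue field minimal multiplicity is the same as reduction number one. Concretely: since $K$ is infinite there is a principal minimal reduction $xA\subseteq\mm$ with $x$ a nonzerodivisor, and then $e(A)=\ell(A/xA)$. Running the $\mm$-adic filtration on $A/xA$ yields $\ell(A/xA)=1+(n-1)+\sum_{k\ge 2}\ell\big((\mm^k+xA)/(\mm^{k+1}+xA)\big)$, where $n=\embdim A$; using $\mm^2\cap xA=x\mm$ together with Nakayama's lemma one checks that the tail sum vanishes (equivalently $e(A)=n$) precisely when $\mm^2=x\mm$, and then $\mm^{k+1}=x^k\mm$ for all $k\ge 0$. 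So I would first record: $A$ has minimal multiplicity if and only if $\mm^2=x\mm$.

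For the implication ``$A$ has minimal multiplicity $\Rightarrow$ $I^*$ has a $2$-linear resolution'': from $\mm^2=x\mm$ and the identities $\mm^{k+1}\cap xA=x\mm^k$ (immediate since $x$ is a nonzerodivisor on $A$), the Valabrega--Valla criterion shows that the initial form $x^*\in R_1$ is a nonzerodivisor, so $R$ is Cohen--Macaulay; moreover multiplication by $x^{k-1}$ identifies $R_k$ with $\mm/x\mm=R_1$ for every $k\ge 1$, whence $H_R(t)=\big(1+(n-1)t\big)/(1-t)$. Thus the $a$-invariant of $R$ is $0$, so $\reg R=a(R)+\dim R=1$ (as $R$ is Cohen--Macaulay), i.e.\ $\reg I^*=2$. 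Since $I\subseteq\nn^2$, the ideal $I^*$ lies in $S_{\ge 2}$ and is generated in degree $2$, so a minimal graded free resolution of $I^*$ begins in degree $2$; combined with $\reg I^*=2$ this forces $\beta^S_{i,j}(I^*)=0$ for $j\ne i+2$, i.e.\ the resolution is $2$-linear.

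For the converse, assume $I^*$ has a $2$-linear resolution and write the numerator of $H_R(t)=N(t)/(1-t)^n$ as $N(t)=1+\sum_{i\ge 1}(-1)^i b_i t^{i+1}$ with $b_i=\beta^S_{i,i+1}(R)$. Since $\dim R=1$ we have $N(t)=(1-t)^{n-1}\widetilde Q(t)$ for a polynomial $\widetilde Q$ with $\widetilde Q(1)=e(R)=e(A)$, and comparing the constant and linear coefficients gives $\widetilde Q(t)=1+(n-1)t+\cdots$. Because $n-1=n-\dim R\le\pd_S R\le n$, the polynomial $\widetilde Q$ has degree at most $2$. If its degree were exactly $2$, then $\pd_S R=n$, the last Betti number $b_n$ is positive, and matching leading coefficients forces the quadratic coefficient of $\widetilde Q$ to equal $-b_n$; this gives $e(A)=e(R)=\widetilde Q(1)=n-b_n<n$, contradicting Abhyankar's inequality $e(A)\ge\embdim A$. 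Hence $\widetilde Q(t)=1+(n-1)t$, so $e(A)=n$ and $A$ has minimal multiplicity.

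The delicate point is precisely this last step. Having a $2$-linear resolution does \emph{not} by itself make $R=\gr_\mm A$ Cohen--Macaulay --- there exist quadratic ideals with a $2$-linear resolution whose quotient is not Cohen--Macaulay --- so one cannot read off $e(R)=n$ directly from the shape of the resolution; it is the a priori bound $e(A)\ge\embdim A$ that rules out the spurious Betti number $b_n$ at the end. Dually, in the forward direction the essential input is that reduction number one passes to $\gr_\mm A$ via Valabrega--Valla, which simultaneously pins down $H_R(t)$.
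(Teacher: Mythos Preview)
Your argument is correct, and the converse direction in particular follows a genuinely different path from the paper's. For the forward implication both you and the paper first establish that $R=\gr_\mm A$ is Cohen--Macaulay (the paper by quoting Sally's theorem, you by re-deriving it via Valabrega--Valla from $\mm^2=x\mm$); then the paper passes to $\bar R=R/x^*R\cong\bar S/\bar\mm^2$ and invokes the known $2$-linear resolution of $\bar\mm^2$, whereas you compute $H_R(t)=(1+(n-1)t)/(1-t)$ and read off $\reg R=1$ directly. For the converse the paper again reduces modulo a general linear form, uses the characterization that an $\bar\mm$-primary ideal in $\bar S$ has a linear resolution iff it is a power of $\bar\mm$, and then appeals to the formula $\reg R=\max\{\reg(0:_R x),\reg\bar R\}$; your numerical argument bypasses both ingredients by reading the $h$-polynomial $\widetilde Q$ off the shape of the linear resolution and using Abhyankar's bound $e(A)\ge n$ to exclude the possibility $\deg\widetilde Q=2$. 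Your route is more self-contained and makes transparent exactly where the Cohen--Macaulayness of $A$ (rather than of $R$) enters, while the paper's route is quicker once the cited lemmas are available. One minor correction: the identity $\mm^{k+1}\cap xA=x\mm^k$ is not a consequence of $x$ being a nonzerodivisor alone; it holds here because $\mm^{k+1}=x\mm^k\subseteq xA$ once you know $\mm^2=x\mm$, which is what your argument actually uses.
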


\begin{proof}
As in the proof of Proposition \ref{prop:local-ineq} we denote $\bar{R}=R/(x)$ for some $x\in R_1$ with $\ell (0:x) < \infty$,
and we write $\bar{R}=\bar{S}/J$.
We use the fact that an $\bar{\mm}$-primary ideal in $\bar{S}$ has a linear resolution if and only if it is
a power of the graded maximal ideal $\bar{\mm}$ of $\bar{S}$, see \cite[Exercise 4.1.17(b)]{BH}.

We denote $n=\embdim(A)$.

Suppose that $e(A)=n$. Then  $R$ is Cohen-Macaulay by a result of J.~Sally, see \cite[Theorem 2]{Sally-cm}.
This implies that $x$ is regular on $R$, hence $e(\bar{R})=e(R)$.
This is only possible if $J= \bar{\mm}^2$, which has  a $2$-linear resolution over $\bar{S}$ by the remark before.
Since $x$ is regular on $R$, it follows that $I^*$ itself is quadratic and it has a linear resolution over $S$.

Conversely, assume that $I^*$ has a    $2$-linear $S$-resolution. Therefore $I^*$ and $J$ are generated by quadrics.
If  $e(R)>n$, by \eqref{eq:rbar} we get  $e(\bar{R}) >n$, which implies that $J \subsetneq \bar{\mm}^2$. Therefore, $\reg \bar{R} >1$.
By \cite[Proposition 20.20]{Eis}
$$
\reg R = \max \{ \reg (0:_R x), \reg \bar{R} \}  >1,
$$
hence $I^*$ does not have a linear resolution over $S$, a contradiction.
\end{proof}

\begin{Proposition}
\label{prop:ci-local}
Assume $A$ is quadratic.
The following statements are equivalent:
\begin{enumerate}
\item[{\em (a)}] $e(A)= 2^{\embdim A-1}$;
\item[{\em (b)}] $I^*$ is a complete intersection ideal;
\item[{\em (c)}] $I$  is a complete intersection ideal.
\end{enumerate}
\end{Proposition}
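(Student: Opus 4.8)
The plan is to run the cycle $(b)\Rightarrow(a)\Rightarrow(b)$, which already gives $(a)\Leftrightarrow(b)$, and then to close it with $(b)\Rightarrow(c)$ and $(c)\Rightarrow(a)$. Throughout I would set $n=\embdim A$; since $I\subseteq\nn^2$ this equals $\embdim B$, and because $\dim A=\dim R=1$ while $B$ and $S$ are regular of dimension $n$, both $I$ and $I^*$ have height $n-1$. Thus ``complete intersection'' means, for $I$, that $\mu(I)=n-1$, and for $I^*$ — using that $A$ is quadratic — that $I^*$ is generated by $n-1$ quadrics. I would also use $e(A)=e(R)$ freely. For $(b)\Rightarrow(a)$: if $I^*$ is a complete intersection then, being generated in degree $2$ and of height $n-1$ in the Cohen--Macaulay ring $S$, it is generated by a regular sequence of $n-1$ quadrics, so $R=S/I^*$ has Hilbert series $(1-t^2)^{n-1}/(1-t)^n=(1+t)^{n-1}/(1-t)$, and hence $e(A)=e(R)=2^{n-1}$.

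The main work is $(a)\Rightarrow(b)$. Here I would reuse the set-up from the proof of Proposition~\ref{prop:local-ineq}: pick $x\in R_1$ with $\ell(0:_Rx)<\infty$, write $\bar R=R/(x)\cong\bar S/J$ with $\bar S$ a polynomial ring in $n-1$ variables and $J$ generated by quadrics, and fix a regular sequence $q_1,\dots,q_{n-1}\in J$ of quadrics. From $e(\bar R)=e(R)+\ell(0:_Rx)$ together with $e(R)\le e(\bar R)\le e(\bar S/(q_1,\dots,q_{n-1}))=2^{n-1}$, the hypothesis $e(A)=2^{n-1}$ forces equality throughout. This has two consequences: first $\ell(0:_Rx)=0$, so $x$ is a nonzerodivisor on $R$ and $R$ is Cohen--Macaulay; second $\bar R$ is Artinian with $\ell(\bar R)=2^{n-1}=\ell(\bar S/(q_1,\dots,q_{n-1}))$, and since $\bar R$ is a quotient of $\bar S/(q_1,\dots,q_{n-1})$ the surjection is an isomorphism, so $J=(q_1,\dots,q_{n-1})$ is a complete intersection minimally generated by $n-1$ quadrics. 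It then remains to transfer this back to $I^*$. Since $I^*\subseteq\mm^2$ we have $R_1=S_1$, so $x$ lifts to a nonzero linear form of $S$, which is $S$-regular; combined with $x$ being $R$-regular, this means tensoring a minimal graded free $S$-resolution of $R$ with $\bar S=S/(x)$ yields a minimal graded free $\bar S$-resolution of $\bar R$, whence $\beta^S_i(R)=\beta^{\bar S}_i(\bar R)$ for all $i$. In particular $\mu(I^*)=\beta^S_1(R)=\beta^{\bar S}_1(\bar R)=\mu(J)=n-1$, i.e.\ $I^*$ is a complete intersection.

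Finally I would handle $(b)\Rightarrow(c)$ and $(c)\Rightarrow(a)$. For the first: $I^*$ is generated by the homogeneous elements $f^*$ with $f\in I$, so by graded Nakayama one can choose $f_1,\dots,f_{n-1}\in I$ with $f_1^*,\dots,f_{n-1}^*$ a minimal generating set of $I^*$; then $\{f_1,\dots,f_{n-1}\}$ is a standard basis of $I$, hence (as recalled in the introduction) a generating set, so $\mu(I)\le n-1=\height I$ and $I$ is a complete intersection. For the second: write $I=(f_1,\dots,f_{n-1})$ with $f_1,\dots,f_{n-1}$ a $B$-regular sequence; since $I\subseteq\nn^2$, each $f_i$ has $\nn$-adic order at least $2$, so by the classical lower bound for multiplicities of complete intersections $e(A)=e(B/I)\ge\prod_{i=1}^{n-1}\ord_\nn(f_i)\ge2^{n-1}$, and combining with $e(A)\le2^{n-1}$ from Proposition~\ref{prop:local-ineq} we obtain $e(A)=2^{n-1}$, i.e.\ $(a)$.

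The delicate point is $(a)\Rightarrow(b)$: the whole role of maximal multiplicity is to force the auxiliary linear form $x$ to be a nonzerodivisor on $R$, which is exactly what makes reduction modulo $x$ reversible at the level of free resolutions, so that the complete intersection property descends from $J$ to $I^*$. The only genuinely external input is the complete-intersection multiplicity inequality $e(B/(f_1,\dots,f_{n-1}))\ge\prod_i\ord_\nn(f_i)$ used in $(c)\Rightarrow(a)$, which is well known.
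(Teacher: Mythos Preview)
Your proof is correct. The treatment of $(a)\Leftrightarrow(b)$ is essentially the paper's argument, only spelled out in more detail: the paper also forces equality throughout the chain $e(R)\le e(\bar R)\le 2^{n-1}$ to conclude that $x$ is $R$-regular and $J$ is a complete intersection, and then passes back to $I^*$; your Betti-number justification for this last step is a clean way to make explicit what the paper leaves implicit.

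The genuine difference is in closing the loop with $(c)$. The paper proves $(b)\Leftrightarrow(c)$ directly via Lemma~\ref{lemma:quad-basis}, which shows (using only that $I^*$ is generated in degree~$2$) that any minimal generating set of $I$ is already a minimal standard basis, hence $\mu(I)=\mu(I^*)$; together with $\height I=\height I^*$ this gives both implications at once. Your $(b)\Rightarrow(c)$ is one half of that lemma, but for the return direction you instead prove $(c)\Rightarrow(a)$ by invoking the complete-intersection multiplicity bound $e(B/(f_1,\dots,f_{n-1}))\ge\prod_i\ord_\nn(f_i)$. Both routes are valid. The paper's is entirely elementary and self-contained, and Lemma~\ref{lemma:quad-basis} is reused several times later (Theorem~\ref{thm:forbidden}, Corollary~\ref{cor:ci-aci}, Theorem~\ref{thm:quad-ci}); your route is shorter here but imports an external, if standard, inequality.
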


\begin{proof}
(a) \implies (b):  Since $e(A)=2^{\embdim A-1}$, by \eqref{eq:q-ci} it follows that $J$ is generated by a regular sequence of quadrics and  $e(R)=e(\bar{R})$.
The latter implies that $x$ is regular on $R$, therefore $I^*$ is generated by a regular sequence.

(b) \implies (a):  follows from the fact that $I$ is generated by a regular sequence of $n-1$ quadrics.

The equivalence of (b) and (c) is a consequence of Lemma \ref{lemma:quad-basis}.
\end{proof}

\begin{Lemma}
\label{lemma:quad-basis}
Let $(B,\nn)$ be a regular local ring and $I\subseteq \nn^2$ any ideal such that $I^*$ is generated in degree $2$.
\begin{enumerate}
\item[{\em (a)}] Let $\mathcal{F}\subset I$ be a finite set. Then $\mathcal{F}$ is a (minimal) standard basis for $I$ if and only if  it is a (minimal) generating set for $I$.
\item[{\em (b)}] The ideal $I$ is  an (almost) complete intersection ideal  if and only if $I^*$ is an (almost) complete intersection ideal.
\end{enumerate}
\end{Lemma}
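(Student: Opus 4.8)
The plan is to establish (a) first, since (b) follows quickly from it together with an elementary dimension count.

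For (a), one implication is free: as recalled in the introduction, a standard basis of $I$ always generates $I$, with no hypothesis needed, so the whole point is the converse. Suppose $\mathcal{F}=\{f_1,\dots,f_m\}$ generates $I$; I want the initial forms $f_1^*,\dots,f_m^*$ to generate $I^*$. Because $I\subseteq\nn^2$ one has $(I^*)_0=(I^*)_1=0$, and since by hypothesis $I^*$ is generated in degree $2$, it suffices to show $(I^*)_2\subseteq(f_1^*,\dots,f_m^*)$. Every nonzero element of $(I^*)_2=(I+\nn^3)/\nn^3$ equals $f^*$ for some $f\in I$ with $\ord f=2$; writing $f=\sum_i g_if_i$ with $g_i\in B$, each $f_i\in\nn^2$ forces $g_if_i\in\nn^3$ whenever $\ord f_i\ge 3$, while for $\ord f_i=2$ the class of $g_if_i$ in $S_2=\nn^2/\nn^3$ depends only on $g_i$ modulo $\nn$ and equals $\bar g_i\,f_i^*$, where $\bar g_i\in K\subseteq S$ is the residue of $g_i$. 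Hence $f^*=\sum_{\ord f_i=2}\bar g_i f_i^*\in(f_1^*,\dots,f_m^*)$, as wanted. This is exactly the step that uses the degree-$2$ hypothesis in an essential way, and I expect it to be the only real obstacle: in higher initial degree the lower-order pieces of the $g_if_i$ may cancel against one another, so $f^*$ need not lie in the ideal generated by the $f_i^*$.

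Next I would record that, as a consequence, the finite generating sets of $I$ and the standard bases of $I$ are one and the same. In particular $\mu(I)=\mu(I^*)$: lifting a minimal homogeneous generating set of $I^*$ (all of whose elements have degree $2$) to elements of $I$ with the prescribed initial forms produces a standard basis of $I$ of cardinality $\mu(I^*)$, so $\mu(I)\le\mu(I^*)$, while the reverse inequality is the implication just proved, applied to a minimal generating set of $I$. Finally, since $I^*$ is a graded ideal with homogeneous minimal generators, a standard basis is minimal precisely when it has $\mu(I^*)$ elements, whereas by Nakayama a generating set of $I$ is minimal precisely when it has $\mu(I)$ elements; as these numbers agree, ``minimal standard basis'' and ``minimal generating set'' coincide, which finishes (a).

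For (b), I would first note that $\height I=\height I^*$: indeed $\dim B=\dim S$ (both equal the number of variables of the polynomial ring $S=\gr_\nn B$) and $\dim(B/I)=\dim\gr_\nn(B/I)=\dim(S/I^*)$, so $\height I=\dim B-\dim(B/I)=\dim S-\dim(S/I^*)=\height I^*$. Combining this with $\mu(I)=\mu(I^*)$ from (a), and recalling that by Krull's height theorem $\mu(J)\ge\height J$ for every ideal $J$, the conditions ``$\mu=\height$'' (complete intersection) and ``$\mu=\height+1$'' (almost complete intersection) are preserved verbatim when passing between $I$ and $I^*$ in either direction, which gives (b).
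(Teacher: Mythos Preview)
Your proof is correct and follows essentially the same approach as the paper. The key computation---writing $f=\sum g_if_i$ for $f\in I$ of order $2$ and reading off $f^*$ modulo $\nn^3$ as a $K$-linear combination of the $f_i^*$---is identical; the only difference is organizational: the paper argues the minimal case directly (showing a minimal standard basis that is not a minimal generating set would force a redundancy among the $f_i^*$), whereas you first prove the unadorned equivalence ``standard basis $\Leftrightarrow$ generating set'' and then deduce the minimal version via the equality $\mu(I)=\mu(I^*)$. For (b) both proofs invoke $\height I=\height I^*$; you spell out the dimension argument, the paper simply asserts it.
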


\begin{proof}
 (a) Let   $\mathcal{F}= \{ f_1, \dots, f_r \}$ be a minimal standard basis for $I$.
As a general fact, $\mathcal{F}$ is also a generating set for $I$.  Assume $\mathcal{F}$ is not a minimal generating  set.
Without loss of generality we may write $f_1=\sum_{i=2}^r g_i f_i$ with $g_i\in B$, for $i=2,\dots, n$.
Then
\begin{equation}
\label{eq:star}
f_1^*= \sum_{\genfrac{}{}{0pt}{}{i=2}{g_i f_i \notin \nn^3}}^r g_i^*f_i^*,
\end{equation}
contradicting the fact that $\mathcal{F}$ is a minimal generating set for $I^*$.

Conversely, assume $\mathcal{F}=\{f_1, \dots, f_r\}$ is a minimal generating system for $I$.
Since $I^*$ is generated in degree $2$, it suffices to show that  $f^*\in (f_1^*, \dots, f_r^*)$ for  $f\in I$ with  $\deg f^*=2$.
We may write $f=\sum_{i=1}^r g_i f_i$ with $g_i\in B$, $i=1, \dots, r$.
Then
$$
f^*= \sum_{\genfrac{}{}{0pt}{}{i=1}{g_i f_i \notin \nn^3}}^r g_i^* f_i^*,
$$
because $\deg f^*=2$.

Part (b)  follows from part (a) and the fact that $\height I = \height I^*$.
\end{proof}

There are further restrictions for the multiplicity of a quadratic semigroup $H$ if we assume that $\gr_\mm K[H]$ is Cohen-Macaulay.
Before proving them, we list in the next lemma some useful arithmetic properties of the generators of a  quadratic numerical semigroup.

\begin{Lemma}
\label{lemma:quad-semi}
Let $H$ be a numerical semigroup minimally generated by $a_1<a_2<\dots <a_n$ with  $n>1$.
If $H$ is quadratic, then
\begin{enumerate}
\item[{\em (a)}] there exist $ k, \ell \geq 2$ such that $a_1| a_k +a_\ell$.
\item [{\em (b)}] $2 a_i \in \langle a_1, \dots a_{i-1}, a_{i+1}, \dots a_n\rangle$, for all $2\leq i \leq n$.
\end{enumerate}
\end{Lemma}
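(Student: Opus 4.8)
Throughout I would identify $R:=\gr_\mm K[H]\cong S/I^*_H$ with $S=K[x_1,\dots,x_n]$ and $\phi(x_i)=t^{a_i}$, and use the combinatorial description of $R$: for $d\ge 0$ the space $R_d$ has $K$-basis $\{\overline{t^h}:h\in H,\ \ord(h)=d\}$, where $\ord(h)$ is the maximal length of a representation of $h$, and the canonical surjection $S_d\twoheadrightarrow R_d$ sends a monomial $x^\gamma$ to $\overline{t^{\langle\gamma,\ab\rangle}}$ if $\ord(\langle\gamma,\ab\rangle)=|\gamma|$ and to $0$ otherwise; thus $(I^*_H)_d$ is spanned by the monomials $x^\gamma$ with $\ord(\langle\gamma,\ab\rangle)>|\gamma|$ together with the binomials $x^\gamma-x^\delta$ with $\langle\gamma,\ab\rangle=\langle\delta,\ab\rangle$ of $\ord$ equal to $d$. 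Two facts I would record first: $\bar x_1$ is not nilpotent in $R$ (since $\ord(ma_1)=m$ for all $m$, any representation of $ma_1$ having length $\le ma_1/a_1$); and for every $j\ge 2$, $\bar x_j$ is nilpotent, because $x_j^{a_1}-x_1^{a_j}\in I_H$ has initial form $x_j^{a_1}$ (as $a_1<a_j$), so $x_j^{a_1}\in I^*_H$. It follows (setting all $x_j=0$, $j\ge 2$, annihilates every monomial/binomial generator of $I^*_H$) that $I^*_H\subseteq(x_2,\dots,x_n)$ and $\sqrt{I^*_H}=(x_2,\dots,x_n)$, with $R/(x_2,\dots,x_n)\cong K[x_1]$.

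For (b) I would argue by contradiction. Fix $i\ge 2$ and suppose $2a_i\notin\langle a_1,\dots,a_{i-1},a_{i+1},\dots,a_n\rangle$. Since $a_i$ is a minimal generator, no representation of $2a_i$ can have coefficient $1$ on $a_i$; hence the only representation of $2a_i$ in $H$ is $a_i+a_i$, so $\ord(2a_i)=2$ and $x_i^2\notin I^*_H$. The key step is to show no element of $(I^*_H)_2$ has a nonzero $x_i^2$-coefficient: if $v=cx_i^2+\sum_{(p,q)\ne(i,i)}c_{pq}x_px_q$ lay in $(I^*_H)_2$ with $c\ne 0$, then in $R_2$ the coefficient of the basis vector $\overline{t^{2a_i}}$ in $\bar v=0$ equals $c$ plus a sum of the $c_{pq}$ over pairs $(p,q)\ne(i,i)$ with $a_p+a_q=2a_i$; as no such pair exists, this forces $c=0$, a contradiction. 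Therefore $(I^*_H)_2\subseteq\operatorname{span}_K\{x_px_q:(p,q)\ne(i,i)\}$. Since $R$ is quadratic, $(I^*_H)_{a_1}=S_{a_1-2}\cdot(I^*_H)_2$, and every monomial occurring in this space is divisible by some $x_px_q$ with $(p,q)\ne(i,i)$; but $x_i^{a_1}$ (with $a_1\ge 2$) is divisible by no such monomial. Hence $x_i^{a_1}\notin I^*_H$, contradicting the second fact above, and (b) is proved.

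For (a) I would, again by contradiction, assume $a_1\nmid a_k+a_\ell$ for all $k,\ell\ge2$. Each $a_j$ with $j\ge 2$ lies in the Apéry set $\mathrm{Ap}(H,a_1)$ (as $a_j-a_1\notin H$ by minimality), so the residues $\bar a_2,\dots,\bar a_n$ are pairwise distinct and nonzero in $\ZZ/a_1$; under the hypothesis the $2(n-1)$ residues $\pm\bar a_2,\dots,\pm\bar a_n$ are then distinct and nonzero, whence $a_1\ge 2n-1$. Combined with the bound $e(H)=a_1\le 2^{n-1}$ of Theorem~\ref{thm:bounds} this already settles $n\le 3$ (and $n=2$ is impossible since quadraticity forces $a_1=2$). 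For general $n$ one must exploit quadraticity through the quadric generators of $I^*_H$: using that $I^*_H$ is generated (as an ideal) by monomials and binomials and is contained in $(x_2,\dots,x_n)$, a minimal quadric generator has one of the forms $x_1x_j$, $x_jx_k$ ($j,k\ge 2$), $x_1x_j-x_px_q$ or $x_jx_k-x_px_q$ ($j,k,p,q\ge 2$); translating each into an identity in $H$, using the minimality of the generators and part (b) to remove any generator $\ne a_1$ from the complementary monomial, and iterating, one should arrive at a relation $a_k+a_\ell=ma_1$. I expect the main obstacle to be precisely controlling this descent when a complementary monomial still involves generators other than $a_1$; this may need an induction (on $n$, or on $e(H)$, compatible with the gluing structure of Section~\ref{sec:gluing}) or a closer analysis of the Apéry set of a quadratic semigroup.
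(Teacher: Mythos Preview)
Your proof of (b) is correct. It differs from the paper's route in that you work directly with the graded pieces $(I_H^*)_d$ and the combinatorial basis of $R_d$, whereas the paper argues via a minimal standard basis of $I_H$ (this is where Lemma~\ref{lemma:quad-basis} enters): since $x_i^{a_1}\in I_H^*$ and a minimal binomial generating set $\{f_1,\dots,f_r\}$ of $I_H$ is already a standard basis with every $f_j^*$ of degree $2$, some term of some $f_j$ must equal $x_i^2$, whence $c_i=2$. Both arguments are short; yours avoids Lemma~\ref{lemma:quad-basis} at the price of a small linear-algebra computation in $R_2$.

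Your argument for (a), by contrast, is incomplete, as you yourself concede, and the gap is not merely technical. The residue count in $\ZZ/a_1$ combined with $e(H)\le 2^{n-1}$ gives no contradiction once $n\ge 4$, and the ``descent through quadric generators'' you sketch does not isolate the right relation. The missing idea is precisely the standard-basis observation used above. Pick a minimal binomial standard basis $\{f_1,\dots,f_r\}$ of $I_H$; it is a general fact about toric ideals of numerical semigroups that for each $i$ some $f_j$ has the pure power $x_i^{c_i}$ as one of its terms. Take the one for $i=1$, say $f=x_1^{c_1}-\prod_{j\ge 2}x_j^{r_j}$. Since $a_1=\min_j a_j$, the equation $c_1a_1=\sum_{j\ge 2} r_ja_j$ forces $c_1>\sum_{j\ge 2} r_j$, so $f^*=\prod_{j\ge 2}x_j^{r_j}$; by Lemma~\ref{lemma:quad-basis} this has degree $2$, hence $\prod_{j\ge 2} x_j^{r_j}=x_kx_\ell$ with $k,\ell\ge 2$ and $c_1a_1=a_k+a_\ell$. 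No induction, Ap\'ery-set analysis, or gluing is needed.
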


\begin{proof}
We may pick $\mathcal{B}=\{ f_1, \dots, f_r \}$ a minimal standard basis  of $I_H$ consisting of binomials.
Since  $H$ is quadratic, $\deg f_j^*=2$ for $j=1, \dots, r$.

For all $i=1, \dots, n$, let $c_i$ be the smallest positive integer such that $c_i a_i$ is a sum of the other generators.
Then for any $i$ there exists $1\leq n_i \leq r$ such that  $f_{n_i}= x_i^{c_i}-\dots$.

Assume $f_{n_1}=x_1^{c_1}-\prod_{j\neq 1} x_j^{r_{j}} \in \mathcal{B}$, which gives the relation
\begin{eqnarray}
\label{eq:c1}
c_1 a_1= \sum_{j\neq 1} r_{j} a_j \text{ with  $r_{j}$ nonnegative integers}.
\end{eqnarray}
Since $a_1= e(H)$ we get $c_1> \sum_{j\neq 1} r_{j}$ and $f_{n_1}^*= \prod_{j\neq 1} x_j^{r_{j}}$.  As $\deg f_{n_1}^*=2$, using \eqref{eq:c1}
we conclude that there exist $k, \ell>1$ such that $c_1 a_1 = a_k+ a_\ell$.

Let $i>1$. Then $g_i=x_1^{a_i}-x_i^{a_1}$ is in $I_H$ and $g_i^*=x_i^{a_1} \in I_H^*$. Therefore, there exists a pure power of $x_i$, namely $x_i^2$,
among the terms of $f_1^*, \dots, f_r^*$. On the other hand, $x_i^{c_i}$ is the smallest pure power of $x_i$ occurring in any binomial in  $I_H$.
We get that  $1<c_i\leq 2$, hence $c_i=2$. This concludes the proof.
\end{proof}

\begin{Example}
\label{ex:one}{\em
A Singular (\cite{Sing}) computation, and also Propositions \ref{prop:3-semi-quad} and \ref{prop:arithmetic} show that the semigroups
$H_1= \langle 3,4,5\rangle$  and $H_2= \langle 4,5,6\rangle$ are quadratic. With notation as in Lemma \ref{lemma:quad-semi}
we notice that $a_1| a_2+ a_3$, respectively $a_1| 2a_3$.
Therefore the indices $k$ and $\ell$ in Lemma \ref{lemma:quad-semi}(a) may be distinct or the same. }
\end{Example}

\begin{Remark}{\em
Part (a) of Lemma \ref{lemma:quad-semi} appeared as Proposition 5.11 in the Ph.D. thesis of the second author \cite{St-thesis}.
There it was derived using the topological properties of the intervals in a quadratic semigroup, as described in \cite{RS}. }
\end{Remark}

\begin{Theorem}
\label{thm:forbidden}
Let $H$ be a quadratic numerical semigroup minimally generated by $a_1<\dots <a_n$ such that $\gr_\mm K[H]$ is Cohen-Macaulay.
The following hold:
\begin{enumerate}
\item[\em{(a)}] either
$
n \leq e(H) \leq 2^{n-1}-2^{n-3}, \text{ or } e(H)= 2^{n-1}.
$
\item[\em{(b)}] If $e(H)=2^{n-1}-2^{n-3}$, then   $I_H^*$ is an almost CI ideal.
\end{enumerate}
In the situation of {\em (b)},   $I_H^*$ has a quadratic Gr\"obner basis with respect to 
the degree reverse lexicographic order induced by $x_n>\dots >x_1$. 
\end{Theorem}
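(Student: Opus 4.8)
The plan is to reduce to an Artinian quotient and to identify an explicit initial ideal. Write $R=\gr_\mm K[H]=S/I_H^*$ with $S=K[x_1,\dots,x_n]$, and let $<$ be the degree reverse lexicographic order induced by $x_n>\dots>x_1$, so that $x_1$ is its smallest variable. Since $R$ is Cohen--Macaulay of dimension one and $a_1=e(H)$ is the multiplicity, the image of $x_1$ in $R$ (the initial form of $t^{a_1}$) is a nonzerodivisor; hence $\bar R:=R/(x_1)\cong\bar S/J$ is Artinian with $\dim_K\bar R=e(H)$, where $\bar S=K[x_2,\dots,x_n]$ and $J=I_H^*\bar S$ is $\bar\mm$-primary and, because $H$ is quadratic, generated by quadrics.

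Next I would produce from the proof of Lemma~\ref{lemma:quad-semi}(b) special quadrics in $J$. For each $i\in\{2,\dots,n\}$ a minimal standard basis of $I_H$ contains a binomial $x_i^2-\prod_{j\neq i}x_j^{s_j}$; its initial form lies in $I_H^*$ and reduces modulo $x_1$ to an element $q_i\in J$ equal to $x_i^2$ or to $x_i^2-x_kx_\ell$ with $2\le k<\ell\le n$ and $2a_i=a_k+a_\ell$. Since the $a_j$ are increasing, the relation $2a_i=a_k+a_\ell$ forces $k<i<\ell$, and a reverse lexicographic comparison then gives $\ini_<(q_i)=x_i^2$ in all cases. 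Put $J_0=(q_2,\dots,q_n)\subseteq J$; as the leading terms $x_2^2,\dots,x_n^2$ are pairwise coprime, $\{q_2,\dots,q_n\}$ is a Gr\"obner basis of $J_0$ by Buchberger's criterion, so $\ini_<(J_0)=(x_2^2,\dots,x_n^2)$ and $\dim_K\bar S/J_0=2^{n-1}$. If $J=J_0$, then $e(H)=\dim_K\bar S/J=2^{n-1}$. If $J\supsetneq J_0$, then, $J$ and $J_0$ being generated in degree two, there is a quadric $q\in J$ not in $J_0$; its normal form $q'$ modulo $\{q_2,\dots,q_n\}$ is a nonzero quadric in $J$ all of whose monomials are squarefree, so $\ini_<(q')$ is a squarefree quadratic monomial $m$ and $\ini_<(J)\supseteq(x_2^2,\dots,x_n^2,m)$. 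Counting the squarefree monomials of $\bar S$ not divisible by $m$ gives $e(H)=\dim_K\bar S/\ini_<(J)\le 2^{n-1}-2^{n-3}$; combined with $e(H)\ge n$ from Theorem~\ref{thm:bounds}, this proves~(a).

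For (b), assume $e(H)=2^{n-1}-2^{n-3}$. Then $J\supsetneq J_0$ and the last inequality is an equality, so $\ini_<(J)=(x_2^2,\dots,x_n^2,m)$; comparing initial ideals gives $J=(q_2,\dots,q_n,q')$, hence $\mu(J)\le n$. On the other hand $\mu(J)>\height J=n-1$, since $J$ is not a complete intersection (otherwise it would be a complete intersection of $n-1$ quadrics and $e(H)=\dim_K\bar S/J=2^{n-1}$); thus $\mu(J)=n=\height J+1$ and $J$ is an almost complete intersection. To transfer this to $I_H^*$ I would use that $x_1$ is the last variable of $<$ and a nonzerodivisor on $R$: by the saturation property of degree reverse lexicographic orders, $\ini_<(I_H^*):x_1=\ini_<(I_H^*:x_1)=\ini_<(I_H^*)$, so $x_1$ is a nonzerodivisor on $S/\ini_<(I_H^*)$ and the monomial ideal $\ini_<(I_H^*)$ is extended from $\bar S$; a Hilbert function comparison (legitimate since $x_1$ is regular on both $R$ and $S/\ini_<(I_H^*)$) identifies $\ini_<(I_H^*)\bar S$ with $\ini_<(J)$, so $\ini_<(I_H^*)=(x_2^2,\dots,x_n^2,m)S$. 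This ideal is generated by quadrics, so $I_H^*$ is $G$-quadratic for $<$; furthermore a minimal monomial generating set of $\ini_<(I_H^*)$ lifts to a Gr\"obner basis of $I_H^*$, so $\mu(I_H^*)\le\mu(\ini_<(I_H^*))=n$, while $\mu(I_H^*)>\height I_H^*=n-1$ because $I_H^*$ is not a complete intersection (as $e(H)\neq 2^{n-1}$, by Theorem~\ref{thm:bounds}). Hence $\mu(I_H^*)=n=\height I_H^*+1$, i.e. $I_H^*$ is an almost complete intersection ideal; this proves~(b) and the final assertion.

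The squarefree monomial counts, the reverse lexicographic comparisons, and the bookkeeping of which standard basis binomials produce the $q_i$ are routine. The step requiring the most care --- and the likely main obstacle --- is the descent from $J$ to $I_H^*$ in part~(b): one must check that, for the degree reverse lexicographic order with $x_1$ as its last variable, reduction modulo $x_1$ commutes with passing to the initial ideal, and this is exactly where the hypothesis that $\gr_\mm K[H]$ is Cohen--Macaulay (equivalently, that $x_1$ is regular on $R$) is used; one should also confirm that the order named in the statement is the one making $x_1$ its smallest variable.
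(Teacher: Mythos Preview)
Your proposal is correct and follows essentially the same route as the paper: reduce modulo the regular element $x_1$, use Lemma~\ref{lemma:quad-semi} to exhibit the quadrics $q_i$ with $\ini_<(q_i)=x_i^2$, pick up an extra squarefree leading monomial when $J\neq J_0$, and count. The only cosmetic difference is in the transfer step of~(b): the paper lifts the Gr\"obner basis $\{g_2,\dots,g_n,f\}$ of $J$ to quadrics in $I_H^*$ and checks these form a Gr\"obner basis via a Hilbert series comparison, whereas you invoke the Bayer--Stillman property that for degrevlex with $x_1$ smallest one has $\ini_<(I_H^*):x_1=\ini_<(I_H^*)$ and $\ini_<(I_H^*)\bar S=\ini_<(J)$; both arguments are equivalent, and your concern about this step is unfounded --- it goes through exactly as you outline, and the order $x_n>\dots>x_1$ in the statement does make $x_1$ the smallest variable.
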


\begin{proof}
(a) Assume $e(H) <2^{n-1}$.   Let $S=K[x_1, \dots, x_n]$.
Since $S/I_H^*$ is Cohen-Macaulay, we get that $x_1$ is   regular  on $S/I_H^*$. Going modulo $x_1$ we have
$$
e(H)=e(S/I_H^*)= e(S/(x_1, I_H^*)) = e(K[x_2,\dots,x_n]/J),
$$
where $J$ denotes the image of the ideal $I_H^*$ through the $K$-algebra map sending $x_1$ to $0$ and keeping the other variables unchanged.

By Lemma \ref{lemma:quad-semi}, for $j=2, \dots, n$ there exist distinct polynomials $g_j=x_j^2-m_j$ in $J$,
where $m_j=0$ or  $m_j=x_{i} x_{k}$ with $i<j<k$. Therefore, with respect to the degree reverse lexicographic order  induced by $x_1<x_2<\dots$ we have that
 $\ini_<(g_j)=x_j^2$, for $2\leq j \leq n$.

By Theorem \ref{thm:bounds} we get that $I_H^*$ is not a CI, hence $\mu(I_H^*)=\mu(J) \geq n$. So besides $g_2, \dots, g_n$
there is at least one more generator $f$ in $J$, $\deg f=2$, and without loss of generality we may assume that $f$ is either a monomial
or a homogeneous binomial whose terms are not pure powers.
We let $T= K[x_2, \dots, x_n]/(x_2^2, \dots, x_n^2)$ and $g$ be the residue class of $\ini_<(f)$ in $T$.
Hence
\begin{eqnarray*}
e(H) &=& \ell(K[x_2,\dots,x_n]/J ) = \ell(K[x_2,\dots,x_n]/ \ini_< (J)) \\
		 &\leq& \ell(K[x_2, \dots, x_n]/(x_2^2, \dots, x_n^2, \ini_<(f))) \\
		 &=& \ell (T/(g)) =\ell ((0: _T g)).
\end{eqnarray*}

Let us denote $x_U=\prod_{k\in U} x_k$, for all $U\subset [2, n]$, where $x_\emptyset=1$.

If $g=x_ix_j$, with $i\neq j$,  a $K$-basis for $(0:_T g)$ is given by the monomials
$$
\{ x_i x_U: U\subset [2, n]\setminus \{i,j\}  \}
\cup \{ x_j x_V: V\subset[2,n]\setminus \{i,j\} \}
\cup \{x_i x_j x_W: W\subset[2,n]\setminus \{i,j\}	\},
$$
hence $e(H)\leq \dim_K (0:_T g)= 3 \cdot 2^{n-3}= 2^{n-1}-2^{n-3}$.

(b) From the above arguments we note that the equality $e(H)= 2^{n-1}-2^{n-3}$ holds if and only if
 $\ini_<(J)= (x_2^2, \dots, x_n^2, \ini_<(f))$, i.e.
$\{g_2, \dots,g_n, f \}$ is a (clearly reduced) Gr\"obner basis of $J$. Therefore, $\mu(J)= n$, which reads as  $I^*_H$ being
an almost CI ideal.

Clearly $g_1, \dots, g_{n-1}$ and $f$ may be lifted to $S$ to quadratic polynomials   $f_1, \dots, f_n$ in $I^*_H$, respectively,  such that
$\ini_<(f_i)= \ini_<(g_i)$ for $1\leq i \leq n-1$ and  $\ini_<(f_n)= \ini_<(f)$. Let $\mathcal{F}= \{ f_1, \dots, f_n\}$.

We claim that $\mathcal{F}$ is a Gr\"obner basis for $I_H^*$.
Clearly $ (\ini_<(f_1), \dots, \ini_<(f_n))  \subseteq \ini_<(I^*_H)$. For the reverse inclusion it is enough to show that these
two ideals have the same Hilbert series.

Indeed, since $x_1$ is regular on $S/I^*_H$ and on $S/(\ini_<(f_1), \dots , \ini_<(f_n))$
we may write
\begin{eqnarray*}
H_{S/\ini_<(I^*_H)}(t)= H_{S/I^*_H}(t)= \frac{1}{1-t} H_{S/(x_1, I^*_H)}(t)=  \frac{1}{1-t} H_{K[x_2, \dots, x_n]/J}(t),  \\
H_{S/(\ini_<(f_1), \dots, \ini_<(f_n))}(t)= \frac{1}{1-t} H_{K[x_2, \dots, x_n]/\ini_<(J)} (t)  =\frac{1}{1-t} H_{K[x_2, \dots, x_n]/J}(t).
\end{eqnarray*}

This ends the proof.
\end{proof}

In Example \ref{ex:aci} we present a family of semigroups satisfying the hypotheses of Theorem \ref{thm:forbidden}(b).

\begin{Remark}
\label{rem:aci-nonkoszul}
{\em
Note that the converse to the implication in Theorem \ref{thm:forbidden}(b) is not true. One may check with Singular (\cite{Sing}) that $H= \langle  11,13,14,15,19 \rangle$ is quadratic,
almost complete intersection, but it is not a  Koszul semigroup.}
\end{Remark}

It is natural to ask the following:

\begin{Question} {\em
\label{que:forbidden-aci}
Do the conclusions of Theorem \ref{thm:forbidden} stay true for  any quadratic numerical semigroup $H$, without assuming that $\gr_\mm K[H]$ is Cohen-Macaulay? }
\end{Question}

The answer is positive if $\embdim(H) \leq 5$, as shown by the second author in \cite[Proposition 1.5, Theorem 1.8]{St-quadcm}. 
Moreover, if $\embdim(H)\leq 7$ it follows from 
Rossi and Valla's work (see \cite[Theorem 5.9]{RV}) that if $H$ is quadratic and not a complete intersection, then $e(H) \leq 2^{n-1}-2^{n-3}$.

\begin{Proposition}
\label{prop:g-quadratic}
Let $H$ be a quadratic semigroup with $\embdim(H)=n$. Assume that  $e(H)$ attains one of the extremal values, namely $e(H) \in \{n, 2^{n-1}\}$,
or that $\gr_\mm K[H]$ is Cohen-Macaulay and $e(H)= 2^{n-1}-2^{n-3}$. Then $H$ is $G$-quadratic and in particular it is a Koszul semigroup.
\end{Proposition}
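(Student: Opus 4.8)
The plan is to establish, in each of the three cases, the stronger assertion that $I_H^*$ already has a quadratic Gr\"obner basis --- with no change of coordinates --- with respect to the degree reverse lexicographic order $<$ on $S=K[x_1,\dots,x_n]$ induced by $x_1<x_2<\dots<x_n$; then $H$ is $G$-quadratic, and it is Koszul by the implication recalled in the Introduction. I will use repeatedly the two elementary facts that $H_{S/\ini_<(I_H^*)}=H_{S/I_H^*}$, and that an inclusion $B\subseteq B'$ of homogeneous ideals of $S$ with $H_{S/B}=H_{S/B'}$ must be an equality. The case in which $\gr_\mm K[H]$ is Cohen--Macaulay and $e(H)=2^{n-1}-2^{n-3}$ needs no further argument: this is precisely the last assertion of Theorem~\ref{thm:forbidden}.

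Assume $e(H)=2^{n-1}$. By Theorem~\ref{thm:bounds}(c), $I_H^*$ is a complete intersection of $n-1$ quadrics, hence $S/I_H^*$ has Hilbert series $(1+t)^{n-1}/(1-t)$, the same as that of $S/(x_2^2,\dots,x_n^2)$. For each $i\ge 2$, Lemma~\ref{lemma:quad-semi}(b) provides a binomial $x_i^2-w_i\in I_H$ with $w_i$ a monomial of degree $\ge 2$ in the variables other than $x_i$; its initial form lies in $I_H^*$, and by the same case analysis as in the proof of Theorem~\ref{thm:forbidden} --- if $\deg w_i=2$, say $w_i=x_jx_k$ (the case $w_i=x_j^2$ being excluded since it would give $2a_i=2a_j$), the relation $2a_i=a_j+a_k$ together with $a_1<\dots<a_n$ forces one of $j,k$ to be $<i$ and the other $>i$, unless one of them equals $1$, while $x_1$ is the $<$-smallest variable --- one checks that its leading monomial is $x_i^2$. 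Thus $(x_2^2,\dots,x_n^2)\subseteq\ini_<(I_H^*)$, and the Hilbert series comparison forces equality; so the reduced (hence homogeneous) Gr\"obner basis of $I_H^*$ consists of $n-1$ quadrics.

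Assume now $e(H)=n$. By Sally's theorem (see the proof of Proposition~\ref{prop:linear}), $R=S/I_H^*$ is Cohen--Macaulay, so the image $x_1$ of $t^{e(H)}$ is a nonzerodivisor on $R$, and, as in the proof of Proposition~\ref{prop:linear}, $R/(x_1)\cong K[x_2,\dots,x_n]/(x_2,\dots,x_n)^2$; hence $H_R(t)=(1+(n-1)t)/(1-t)=H_{S/(x_2,\dots,x_n)^2}(t)$. For each pair $2\le i\le j\le n$ we have $x_ix_j\in I_H^*+(x_1)$, so, comparing degree-$2$ components, $x_ix_j=g_{ij}+x_1\ell_{ij}$ with $g_{ij}\in I_H^*$ homogeneous of degree $2$ and $\ell_{ij}$ linear; since $x_1$ is $<$-smallest, $\ini_<(g_{ij})=x_ix_j$. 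Hence $(x_2,\dots,x_n)^2\subseteq\ini_<(I_H^*)$, equality follows as before, and the $g_{ij}$ form a Gr\"obner basis of quadrics.

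The point needing the most care is this last case: one must know that all products $x_ix_j$ with $i,j\ge 2$ --- not merely the squares $x_i^2$ --- are leading monomials of elements of $I_H^*$, and this is exactly why one passes through $R/(x_1)$, which is legitimate because $R$ is Cohen--Macaulay with $x_1$ a nonzerodivisor. When $e(H)=2^{n-1}$ the much stronger complete-intersection structure is what lets the squares alone pin down $\ini_<(I_H^*)$ via the Hilbert-series squeeze. In either extremal case the argument produces the reduced Gr\"obner basis in the given coordinates, so $H$ is $G$-quadratic and, in particular, Koszul.
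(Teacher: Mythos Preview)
Your proof is correct. The overall strategy matches the paper's --- Sally's theorem and the quotient $R/(x_1)$ in the minimal-multiplicity case, the complete-intersection structure together with Lemma~\ref{lemma:quad-semi}(b) in the maximal case, and a direct appeal to Theorem~\ref{thm:forbidden} in the almost-CI case --- but your verification step differs in a way worth noting.

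Where the paper invokes Conca's lifting lemma (\cite[Lemma~4(2)]{Conca-quadrics}) to pass from the $G$-quadraticity of $R/(x_1)$ back to $R$ when $e(H)=n$, and applies Buchberger's criterion (pairwise coprime leading terms) when $e(H)=2^{n-1}$, you instead run a uniform Hilbert-series squeeze: exhibit explicit quadrics whose leading monomials generate a monomial ideal with the same Hilbert series as $\ini_<(I_H^*)$, forcing equality. This makes the argument self-contained (no external citation to Conca) and handles both extremal cases by the same device; the price is the small extra work of lifting each $x_ix_j$ by hand to a quadric $g_{ij}$, which in effect reproves the relevant special case of Conca's lemma. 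Note also that the paper chooses the opposite variable order ($x_1>x_2>\cdots>x_n$) in the CI case, but as you correctly observe, with $x_1$ smallest the relation $2a_i=a_j+a_k$ forces $j<i<k$, so $x_i^2$ is still the leading term under your order.
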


\begin{proof}
In case $e(H)=n$, as noted in the proof of Proposition \ref{prop:linear} we have that $R=\gr_\mm K[H]$ is Cohen-Macaulay,  $x_1$ is regular on $R$ and
$(x_1,I_H^*)=(x_1, (x_2, \dots, x_n)^2)$, which is a monomial ideal. Therefore $R/(x_1)$ is G-quadratic, and by using Conca's \cite[Lemma 4.(2)]{Conca-quadrics},
we conclude that $\gr_\mm K[H]$ is G-quadratic, too.

In case $e(H)=2^{n-1}$, by Proposition \ref{prop:ci-local} we have that $I_H^*$ is a CI. On the other hand,
by Lemma \ref{lemma:quad-semi}, for $j=2, \dots, n$ there exist distinct $n-1$ quadratic  polynomials $f_j=x_j^2-m_j$ in $I_H^*$,
where $m_j=0$ or  $m_j=x_{i} x_{k}$ with $i<j<k$. Therefore $I_H^*=(f_2, \dots, f_n)$.  
With respect to the degree reverse lexicographic order induced by $x_1>x_2>\dots$ we have that
$\gcd(\ini_<(f_i), \ini_<(f_j))= \gcd(x_i^2, x_j^2)=1$ for all $2\leq i<j \leq n$, hence $\{ f_2, \dots, f_n\}$ is a quadratic Gr\"obner basis for $I^*_H$.

The case $e(H)=  2^{n-1}-2^{n-3}$ was discussed in Theorem \ref{thm:forbidden}(b).
\end{proof}

\begin{Remark}{\em
It was proven in \cite[Theorem 5.2]{HRW} that if $\Lambda$ is any affine semigroup such that $K[\Lambda]$ is Cohen-Macaulay and of minimal multiplicity, 
then $\gr_\mm K[\Lambda]$ is Koszul. With essentially the same argument as in the proof of Proposition \ref{prop:g-quadratic} one obtains that 
$\gr_\mm K[\Lambda]$ is $G$-quadratic.}
\end{Remark} 

\begin{Remark}
\label{rem:cdnr}
{\em
The fact from Proposition \ref{prop:g-quadratic} that if $e(H)=n$  then $\gr_\mm K[H]$ is Koszul, is  folklore, 
and it is also mentioned in the survey 
\cite{CdNR}. 

By applying \cite[\S 6, Proposition 8]{CdNR} it follows that if $e(H)=n+1$ and the Cohen-Macaulay type $\tau$ of $K[H]$ satisfies
$\tau < n-1$, then $\gr_\mm K[H]$ is Koszul.  
The proof can be easily continued to conclude that  $H$ is $G$-quadratic. 
}
\end{Remark}

\begin{Remark} {\em  Let $H$ be a numerical semigroup with $e(H)= 2^{\embdim(H)-1}$.
It is easy to see   that if   $I_H^*$ is CI,  then $I_H^*$ is generated in degree $2$.

However, if we assume that $I_H$ is a CI, can we also derive that $I_H^*$ is CI, hence  quadratic?}
\end{Remark}

\medskip


\section{Quadratic semigroups and   gluings}
\label{sec:gluing}

The following construction on numerical semigroups was introduced by Watanabe (\cite{Wat}) and extended later
by Delorme (\cite{Delorme}) and Rosales (\cite{Rosales}) who seems to have coined the name {\em gluing}.

\begin{Definition}{\em
Given the numerical semigroups $H_1$ and $H_2$ and  the   integers $c_1, c_2 >1$,
the semigroup $H= \langle c_1H_1, c_2H_2 \rangle$ is called a {\em gluing of $H_1$ and $H_2$}
if $c_1\in H_2$, $c_2\in H_1$ and $\gcd(c_1, c_2)=1$.}
\end{Definition}

We are interested in the situation when one of the glued semigroups is $\NN$ itself.
\begin{Definition}
\label{def:gluing}
{\em
Given the numerical semigroup $L$, the   integers $c >1$ and $\ell$ such that $\ell \in  L\setminus G(L)$ and $\gcd(c, \ell)=1$,
the numerical semigroup
$H= \langle cL, \ell \NN\rangle$ is called a {\em simple gluing} of $L$.

If moreover $c=2$ we call $\langle 2L, \ell \rangle$ a {\em quadratic gluing}.}
\end{Definition}

For the rest of the paper, when we describe a semigroup as $\langle cL, \ell \rangle$ we assume it is obtained from a simple gluing as in Definition \ref{def:gluing}.

If in Definition \ref{def:gluing} we allowed $\ell \in G(L)$, then (exactly) one of the generators of $H$ is superfluous
and $\embdim(H)=\embdim(L)$, cf. \cite[Proposition 10.(ii)]{Delorme}.
We want to avoid this case in order to have $\embdim(H)= \embdim(L)+1$.

Consider the simple gluing $H=\langle cL, \ell \rangle$.  Assume $\embdim(L)=n-1$. We may write
\begin{equation}
\label{eq:glue-eq}
\ell =\sum_{i=1}^{n-1} \lambda_i a_i
\end{equation}
 as a sum  of the minimal generators
$a_1,\dots, a_{n-1}$ for $L$ and such that $\sum_{i=1}^{n-1} \lambda_i \geq 2$ is maximal.
This gives a so called {\em gluing relation}
\begin{equation}
\label{eq:glue-poly}
f=x_{n}^c- \prod_{i=1}^{n-1}x_i^{\lambda_i}  \in I_H.
\end{equation}
The largest value of $\sum_{i=1}^{n-1} \lambda_i$ such that \eqref{eq:glue-eq} holds is called the {\em order} of $\ell$ in $L$ and it is denoted $\ord_L(\ell)$.

By convention, unless it is otherwise specified,
when we work with the toric ideal of $H$ we assume that $\ell$ corresponds to the last variable $x_n$ and the rest of the generators of $H$ are ordered as in $L$.
If we denote $S=K[x_1,\dots, x_n]$, it is easy to see
(e.g. in the proof of Lemma 1 in \cite{Wat}) that
\begin{equation}
\label{eq:glue}
I_H= (I_L S , f).
\end{equation}

The next result describes the transfer of quadraticity (and of the Koszul property) via gluings.

\begin{Theorem}
\label{thm:glue-quadratic}
Consider the numerical semigroup $H= \langle cL, \ell \rangle$ where $\gcd(c, \ell)=1$, $c>1$ and $\ell \in L\setminus G(L)$. Let $f$ be a gluing relation as in \eqref{eq:glue-poly}.

If $c \leq \ord_L(\ell)$ the following hold:
\begin{enumerate}
\item [{\em (a)}] $I_H^*= (I_L^* S , f^*)$;
\item [{\em (b)}] $H$ is quadratic \iff  $c=2$ and $L$ is quadratic;
\item [{\em (c)}] $H$ is Koszul \iff $c=2$ and $L$ is Koszul;
\item [{\em (d)}] $I_H^*$ has a quadratic Gr\"obner basis \iff $c=2$ and $I_L^*$ has a quadratic Gr\"obner basis.
\end{enumerate}
\end{Theorem}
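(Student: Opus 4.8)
The plan is to prove part (a) first, since parts (b)--(d) follow from it by relatively soft arguments. For part (a), the inclusion $I_L^* S + (f^*) \subseteq I_H^*$ is immediate from \eqref{eq:glue}, so the content is the reverse inclusion. I would argue as follows. Take a binomial $g \in I_H$ with $g \neq 0$; by the structure of toric ideals we may assume $g = \prod x_i^{\alpha_i} - \prod x_i^{\beta_i}$ with $\alpha_i\beta_i = 0$. Using the gluing relation $f = x_n^c - \prod_{i=1}^{n-1} x_i^{\lambda_i}$ and division, reduce so that $x_n$ appears to a power $< c$ in both terms (as recalled in the commented-out computation leading to \eqref{eq:glue}); this may change $g$ by an element of $(f) \subseteq I_H$, and one must check the effect on initial forms. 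The key numerical input is the hypothesis $c \leq \ord_L(\ell)$: it guarantees $\deg f^* = c$ (the initial form of $f$ is $x_n^c$, because $\sum \lambda_i = \ord_L(\ell) \geq c$, so the monomial $\prod x_i^{\lambda_i}$ has degree $\geq c$ and does not lower the order), and more importantly it controls the interplay between the $x_n$-adic reduction and the passage to initial forms. After reduction, either $g \in I_L S$ (when $x_n$ does not appear) and then $g^* \in I_L^* S$, or $g$ genuinely involves $x_n$ at a power between $1$ and $c-1$; in the latter case a congruence-counting argument with $\gcd(c,\ell)=1$ (again as in the excerpt's suppressed proof of \eqref{eq:glue}) shows this cannot happen once $x_n$-powers are reduced below $c$, unless $g$ already was a multiple of $f$. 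The main obstacle is precisely bookkeeping the initial forms through this reduction: one must verify that reducing $g$ modulo $f$ never drops the initial degree in a way that creates an element of $I_H^*$ not captured by $I_L^* S + (f^*)$, and this is where $c \leq \ord_L(\ell)$ is used, since it forces $\deg f^* = c$ and ensures the monomial $\prod x_i^{\lambda_i}$ (which has degree $\geq c$) does not interfere.

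For part (b): if $c = 2$ and $L$ is quadratic, then by (a) $I_H^* = I_L^* S + (f^*)$ with $I_L^*$ generated in degree $2$ and $\deg f^* = c = 2$, so $I_H^*$ is generated by quadrics. Conversely, suppose $H$ is quadratic. Since $f^* \in I_H^*$ and $\deg f^* = c$ (using $c \leq \ord_L(\ell)$), and since $f^* = x_n^c$ or has $x_n^c$ as a term, a quadratic generating set of $I_H^*$ together with the fact that $x_n^c$ must lie in the ideal generated by quadrics forces $c = 2$ (the pure power $x_n^c$ cannot be a nontrivial combination of quadrics unless $c = 2$, by a degree/support argument, cf. the reasoning in Lemma~\ref{lemma:quad-semi}). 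Then $I_H^* = I_L^* S + (f^*)$ is quadratic, and intersecting with $K[x_1,\dots,x_{n-1}]$, or equivalently setting $x_n = 0$, shows $I_L^*$ is quadratic, so $L$ is quadratic.

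For part (c): I would use that, by (a) with $c = 2$, the ring $R = S/I_H^*$ is obtained from $R' = S'/I_L^*$ (where $S' = K[x_1,\dots,x_{n-1}]$) by adjoining the variable $x_n$ modulo one quadric $f^* $ whose initial term $x_n^2$ makes $x_n$ "almost" a new variable adjoined with a quadratic relation. Concretely, $R \cong R'[x_n]/(f^*)$; since $f^*$ is a quadric and, by the shape $m_j = 0$ or $x_i x_k$ forced by $\deg f^* = 2$, the element $x_n$ satisfies a monic quadratic relation over $R'$, so $R$ is a "quadratic extension" of $R'$ in the sense of Backelin--Fröberg / Conca (fiber product or Segre-type construction); such extensions preserve Koszulness in both directions. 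Alternatively, invoke Corollary~\ref{cor:glue-2} and the machinery of the previous section directly. Conversely, if $H$ is Koszul then $I_H^*$ is quadratic, so by (b) $c = 2$ and $L$ is quadratic; and since $R' = R/(x_n, f^*) \cdot$ — more precisely $R'$ is a retract of $R$ (the inclusion $S' \hookrightarrow S$ and projection $x_n \mapsto 0$ induce $R' \hookrightarrow R \twoheadrightarrow R'$ with composite the identity, using $I_H^* \cap S' = I_L^*$ from (a)) — Koszulness descends along algebra retracts, so $L$ is Koszul.

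For part (d): if $c = 2$ and $I_L^*$ has a quadratic Gröbner basis $\mathcal{G}$ with respect to some term order on $S'$, extend the order to $S$ so that $x_n$ is the largest variable; then $\ini_<(f^*) = x_n^2$ is coprime to every $\ini_<(g)$ for $g \in \mathcal{G}$ (those monomials involve only $x_1,\dots,x_{n-1}$), so all $S$-pairs between $f^*$ and elements of $\mathcal G$ reduce to zero by Buchberger's criterion, and $S$-pairs among elements of $\mathcal G$ reduce to zero as before; hence $\mathcal G \cup \{f^*\}$ is a quadratic Gröbner basis of $I_H^* = I_L^* S + (f^*)$. Conversely, if $I_H^*$ has a quadratic Gröbner basis then in particular $I_H^*$ is quadratic, so $c = 2$ by (b); restricting/specializing the Gröbner basis by setting $x_n = 0$ (choosing the order to eliminate $x_n$) yields a quadratic Gröbner basis of $I_H^* \cap S' = I_L^*$. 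I expect the residue-field-infinite hypothesis from the start of the section is not needed for (d) but is implicitly in force for (c).
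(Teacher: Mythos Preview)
Your approach to part (a) misses the key simplification. You attempt a direct reduction of binomials in $I_H$ modulo $f$ and acknowledge that ``bookkeeping the initial forms through this reduction'' is the obstacle, but you do not resolve it; showing that $(I_L S + (f))^* \subseteq I_L^* S + (f^*)$ by tracking individual binomials is delicate, because subtracting a multiple of $f$ from $g$ can alter $g^*$ in ways not obviously captured by $I_L^*S + (f^*)$. The paper bypasses this entirely: under lex with $x_n$ largest one has $\ini_<(f^*) = x_n^c$ (the hypothesis $c \le \ord_L(\ell)$ gives $\deg f^* = c$, though $f^*$ need not equal $x_n^c$ as you first assert), so $f^*$ is regular on $S/I_L^* S$ (Lemma~\ref{lemma:glue-istar}); then the general fact that $f^*$ regular on $S/I^*$ forces $(I,f)^* = (I^*, f^*)$ (Lemma~\ref{lemma:i-star}) finishes (a). This regularity of $f^*$ is also the engine for (b) and (c): for (b), it makes a minimal standard basis of $I_L$ together with $f$ a \emph{minimal} standard basis of $I_H$, so quadraticity of $I_H^*$ is equivalent to $I_L^*$ quadratic and $\deg f^* = c = 2$; for (c), one applies \cite[Lemma~2]{BF-poincare} (a degree-two regular element preserves Koszulness in both directions) directly. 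Your retract argument for (c) fails when $\ord_L(\ell) = c = 2$: then $f^* = x_n^2 - m$ with $0 \ne m \in S'$, so $x_n \mapsto 0$ sends $f^*$ to $-m$ and the induced map lands in $S'/(I_L^*, m)$, not in $R'$. Invoking Corollary~\ref{cor:glue-2} is circular, as that corollary is deduced from this theorem.

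The most serious gap is the converse in (d). You propose ``specializing the Gr\"obner basis by setting $x_n = 0$ (choosing the order to eliminate $x_n$)'', but the hypothesis only furnishes a quadratic Gr\"obner basis for \emph{some} term order, which you are not free to replace by an elimination order; and for an arbitrary order, neither setting $x_n = 0$ nor discarding the elements involving $x_n$ yields a Gr\"obner basis of $I_L^*$. The paper's argument here is substantially more involved: it first uses the arithmetic of the gluing (from $\ell + c a_k = c\sum \mu_i a_i$ and $\gcd(c,\ell)=1$) to rule out any monomial divisible by $x_n$ to the first power among the terms of the reduced Gr\"obner basis $\mathcal G$; it then shows $\mathcal G$ contains a unique element $g = x_n^2 - m$ with $\ini_<(g) = x_n^2$, so that $\mathcal G' = \mathcal G \setminus \{g\}$ lies in $S'$; finally, it compares Hilbert series (using regularity of $f^*$ on $S/I_L^*S$ and of $x_n^2$ on $S/\ini_<(J)$) to conclude that the ideal $J = (\mathcal G')$ equals $I_L^*S$, whence $\mathcal G'$ is the desired quadratic Gr\"obner basis of $I_L^*$. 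Your forward direction of (d) is correct and matches the paper's.
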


With notation as above, the condition $c\leq \ord_L(\ell)$ implies, using \eqref{eq:glue-eq}, that $c \cdot e(L) \leq \ord_L(\ell) \cdot e(L) \leq \ell$.
Since $\gcd(c, \ell)=1$ and $c>1$  we obtain
\begin{equation}
\label{eq:small-c}
e(\langle cL, \ell \rangle)= c\cdot e(L) < \ell.
\end{equation}

For the proof of Theorem \ref{thm:glue-quadratic} and later we need the following technical lemmas.

The first one follows from \cite[Lemma, part (a), pp. 185]{He-reg}.
\begin{Lemma}
\label{lemma:i-star}
Let $I$ be an ideal in the polynomial ring $S=K[x_1,\dots, x_n]$  such that $I \subset \mm= (x_1, \dots, x_n)$.
 If $f\in S$ is such that $\deg f^* >0$ and 
  $f^*$ is regular on $S/I^*$, then
$$
(I, f)^*= (I^*, f^*).
$$
\end{Lemma}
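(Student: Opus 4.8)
The plan is to reduce the statement to the cited result \cite[Lemma, part (a), pp. 185]{He-reg} by checking its hypotheses are met, and along the way to make explicit the inclusion that is ``obvious'' and the one that requires the regularity assumption. First I would observe that the inclusion $(I^*, f^*) \subseteq (I,f)^*$ holds for purely formal reasons: if $g \in I$ then $g^* \in I^* \subseteq (I,f)^*$ since $g \in (I,f)$, and $f^* \in (I,f)^*$ since $f \in (I,f)$; as $(I,f)^*$ is by definition generated by initial forms of elements of $(I,f)$, this settles one containment. So the content is the reverse inclusion $(I,f)^* \subseteq (I^*, f^*)$, i.e. that every initial form of an element $h = a + bf$ with $a \in I$, $b \in S$, already lies in $(I^*, f^*)$.

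The key step is to analyze $h^*$ in terms of $a^*$, $b^*$ and $f^*$. Writing $d = \deg h^*$, one splits according to whether the lowest-degree parts of $a$ and $bf$ cancel in degree $d$. If they do not cancel, then $h^*$ is (a sum of) $a^*$ and/or $(bf)^* = b^* f^*$ in the relevant degree, hence lies in $(I^*) + (f^*) = (I^*, f^*)$ immediately, with no hypothesis on $f^*$ needed. The delicate case is cancellation: the degree-$d'$ components of $a$ and of $bf$ agree up to sign for all $d' < d$, so in particular $a^*$ (if $\deg a^* < d$) equals $-b^* f^*$ in $S/I^*$, forcing $b^* f^* \in I^*$; this is exactly where regularity of $f^*$ on $S/I^*$ enters, letting us conclude $b^* \in I^*$ and then bootstrap: replace $b$ by $b - (\text{a lift of } b^*)$, which strictly raises $\operatorname{ord}(b)$ or kills the cancellation, and recurse. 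The Noetherian/degree reasons guarantee this terminates, and at the end $h^*$ is exhibited in $(I^*, f^*)$. Since the paper already has \cite{He-reg} at hand and states the result follows from it, I would in practice just cite it; the remarks above are the skeleton of why that citation applies — in particular they record that $\deg f^* > 0$ is used to make sense of ``initial form of $f$'' as a genuine form of positive degree and to ensure $f \notin I$ does not trivialize the statement.

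The main obstacle is bookkeeping in the cancellation case: one must be careful that after subtracting off a lift of $b^*$ the new representation $h = a' + b'f$ still has $a' \in I$ and that the recursion decreases a well-defined invariant (e.g. $\deg h^* - \operatorname{ord}(b)$, or the pair ordered lexicographically). This is routine but is the only place where something could go wrong, and it is precisely the point at which the regularity hypothesis is indispensable — without it $b^* f^* \in I^*$ would not give $b^* \in I^*$, the induction would stall, and indeed the conclusion $(I,f)^* = (I^*, f^*)$ can fail.
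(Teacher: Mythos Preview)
Your proposal is correct and matches the paper's approach: the paper simply cites \cite[Lemma, part (a), pp.~185]{He-reg} without further argument, and your plan is precisely to invoke that citation while sketching the standard cancellation-and-recursion argument that underlies it. The sketch you give (easy inclusion, then for $h=a+bf$ use regularity of $f^*$ to peel off $b^*\in I^*$ in the cancellation case and iterate on a decreasing invariant) is the content of the cited lemma, so there is nothing to add.
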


As an immediate corollary of Lemma \ref{lemma:i-star} we obtain the next statement.
\begin{Lemma}
\label{lemma:strict-ci}
Let $f_1, \dots, f_r$ be a regular sequence in $S=K[x_1, \dots, x_n]$ such that $f_1^*, \dots, f_r^*$ is a regular sequence, too.
Then
$$
(f_{i_1}, \dots, f_{i_s})^*= (f_{i_1}^*, \dots, f_{i_s}^*),
$$
for   $s=1,\dots, r$ and   $1 \leq i_1 < \dots < i_s \leq r$.
\end{Lemma}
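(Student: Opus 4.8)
The plan is a straightforward induction on $s$, at each step stripping off one generator and applying Lemma~\ref{lemma:i-star}. Before starting, I would observe that every $f_i^*$ is a non-unit (being an element of a regular sequence), hence a homogeneous polynomial of positive degree; consequently each $f_i$ has vanishing constant term, so any of the ideals $(f_{i_1},\dots,f_{i_j})$ is contained in $\mm=(x_1,\dots,x_n)$, and the hypothesis ``$\deg f^*>0$'' of Lemma~\ref{lemma:i-star} holds automatically for $f=f_{i_j}$.

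The one point that requires a word of justification is that every subsequence of $f_1^*,\dots,f_r^*$ is again a regular sequence on $S$. This is where homogeneity is essential: since the $f_i^*$ are homogeneous of positive degree in the Noetherian graded ring $S$, permutations of a regular sequence of such elements remain regular sequences, and an initial segment of a regular sequence is trivially one; choosing a permutation that lists $f_{i_1}^*,\dots,f_{i_s}^*$ first shows that these form a regular sequence, so in particular $f_{i_s}^*$ is a nonzerodivisor on $S/(f_{i_1}^*,\dots,f_{i_{s-1}}^*)$.

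Now for the induction. The case $s=1$ is the tautology $(f_{i_1})^*=(f_{i_1}^*)$. For $s>1$ put $I=(f_{i_1},\dots,f_{i_{s-1}})$; by the inductive hypothesis $I^*=(f_{i_1}^*,\dots,f_{i_{s-1}}^*)$, and by the previous paragraph $f_{i_s}^*$ is regular on $S/I^*$. Since also $I\subseteq\mm$, Lemma~\ref{lemma:i-star} applied to the pair $(I,f_{i_s})$ yields
$$
(f_{i_1},\dots,f_{i_s})^*=(I,f_{i_s})^*=(I^*,f_{i_s}^*)=(f_{i_1}^*,\dots,f_{i_s}^*),
$$
which closes the induction.

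I expect no serious obstacle; the only thing one must not skip is the permutation/subsequence remark, since regularity modulo an ideal is not inherited by sub-ideals and a subsequence of a non-homogeneous regular sequence need not be regular — so the conclusion genuinely rests on the $f_i^*$ being initial forms, i.e. homogeneous.
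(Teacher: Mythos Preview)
Your proof is correct and follows exactly the approach the paper intends: the paper merely says that Lemma~\ref{lemma:strict-ci} is ``an immediate corollary of Lemma~\ref{lemma:i-star}'', and your induction on $s$ spells this out, including the necessary remark that subsequences of a homogeneous regular sequence remain regular (via permutation). The only cosmetic quibble is that the base case $(f_{i_1})^*=(f_{i_1}^*)$ is not literally a tautology---it uses that $S$ is a domain so that $(gf)^*=g^*f^*$---but this is immediate in a polynomial ring.
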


\begin{Lemma}
\label{lemma:glue-istar}
Consider the ideal $I \subset K[x_1, \dots, x_{n-1}] \subset S=K[x_1, \dots, x_n]$ and the polynomial $f= x_n^c-m$ with $c>0$ and $m\in K[x_1, \dots, x_{n-1}]$. Then
\begin{enumerate}
\item[{\em (a)}] $f$ is regular  on $S/IS$, and
\item[{\em (b)}] if $\deg m \geq c$, then $f^*$ is regular on $S/I^*S$ and $(IS, f)^*= (I^*S, f^*)$.
\end{enumerate}
\end{Lemma}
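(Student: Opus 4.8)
Let me plan a proof.

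The plan is to prove (a) first, then derive (b) using Lemma \ref{lemma:i-star}. For part (a), the key observation is that since $I \subset K[x_1,\dots,x_{n-1}]$, the quotient $S/IS$ is a polynomial extension: $S/IS \cong (K[x_1,\dots,x_{n-1}]/I)[x_n]$. Write $A = K[x_1,\dots,x_{n-1}]/I$, so $S/IS = A[x_n]$. Then $f = x_n^c - \bar{m}$ where $\bar{m} \in A$. A polynomial in $A[x_n]$ that is monic in $x_n$ (up to the leading coefficient $1$ on $x_n^c$) is a nonzerodivisor: if $g \in A[x_n]$ satisfies $fg = 0$, comparing leading coefficients in $x_n$ (the leading coefficient of $f$ being the unit $1$) forces the leading coefficient of $g$ to be $0$, hence $g = 0$ by induction on $\deg_{x_n} g$. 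This gives regularity of $f$ on $S/IS$.

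For part (b), first I would identify $f^*$. Since $f = x_n^c - m$ with $m \in K[x_1,\dots,x_{n-1}]$ and $\deg m \geq c$, the term of least degree in $f$ is $x_n^c$, so $f^* = x_n^c$ (note if $\deg m = c$ this is still true because $m$ has no $x_n$, so $x_n^c$ and $m$ are distinct monomials of the same degree — actually one must be slightly careful: when $\deg m = c$, the initial form is $x_n^c - m^*$ where $m^*$ is the degree-$c$ part of $m$, unless $m$ is already homogeneous of degree $c$ in which case $f$ itself is homogeneous and $f^* = f = x_n^c - m$; in either case $f^*$ has the shape $x_n^c - (\text{poly in } x_1,\dots,x_{n-1})$, and when $\deg m > c$ we get cleanly $f^* = x_n^c$). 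In all cases $f^* = x_n^c - m'$ with $m' \in K[x_1,\dots,x_{n-1}]$ (possibly $m'=0$), so the same monic-in-$x_n$ argument as in (a), now applied over $S/I^*S = (K[x_1,\dots,x_{n-1}]/I^*)[x_n]$, shows $f^*$ is regular on $S/I^*S$. Since $\deg f^* = c > 0$ and $I S \subset \mm$, Lemma \ref{lemma:i-star} applies directly and yields $(IS, f)^* = ((IS)^*, f^*) = (I^*S, f^*)$, using that $(IS)^* = I^*S$ — this last equality holds because initial forms of elements of $K[x_1,\dots,x_{n-1}]$ computed in $S$ agree with those computed in the smaller ring, and extension to $S$ commutes with taking the initial ideal for ideals generated in the subring.

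The main obstacle, and the only place needing real care, is the case $\deg m = c$ in part (b): one must check that $f^*$ still has the form "$x_n^c$ minus something in $x_1,\dots,x_{n-1}$" so that the monic-in-$x_n$ nonzerodivisor argument still runs, rather than $f^*$ degenerating into something that mixes $x_n$ in a way that breaks monicity — but since $m$ never involves $x_n$, the coefficient of $x_n^c$ in $f^*$ is always $1$, so this is fine. A secondary routine point is justifying $(IS)^* = I^*S$, which I would dispatch with the observation that a standard basis of $I$ in $K[x_1,\dots,x_{n-1}]$ remains a standard basis of $IS$ in $S$ (the reduction/division process introduces no $x_n$), so their initial ideals correspond. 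With these in hand the proof is complete.
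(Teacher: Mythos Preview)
Your proof is correct and takes a genuinely different route from the paper's. The paper argues via Gr\"obner bases: choosing the lex order with $x_n$ largest, one has $\ini_<(f)=x_n^c$ (and, under the hypothesis of (b), $\ini_<(f^*)=x_n^c$); since $x_n$ does not occur in $\ini_<(IS)$ or $\ini_<(I^*S)$, the monomial $x_n^c$ is regular modulo these initial ideals, and regularity modulo $IS$ and $I^*S$ then follows from \cite[Proposition~15.15]{Eis}. Your argument bypasses this machinery entirely by observing that $S/IS\cong A[x_n]$ with $A=K[x_1,\dots,x_{n-1}]/I$ and that a polynomial monic in $x_n$ is automatically a nonzerodivisor in $A[x_n]$; the same reasoning applied to $S/I^*S$ handles the first half of (b), and both approaches finish (b) with Lemma~\ref{lemma:i-star}. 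Your route is more self-contained and avoids the external citation; the paper's route is a one-line appeal to a standard Gr\"obner technique. You also make explicit the identification $(IS)^*=I^*S$, which the paper uses silently.
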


\begin{proof}
Let $<$  be the lexicographic term order on $S$ induced by $x_n>x_{n-1}>\dots >x_{1}$. Then $\ini_<(f)= x_n^c$ and under the extra  requirement at (b) we also have
$\ini_<(f^*)=x_n^c$. Since the variable $x_n$ does not appear in the monomial generators for $\ini_< (IS)$ and $\ini_<(IS^*)$ we
get that $x_n^c$ is regular on $S/\ini_<(I)$ and on $S/\ini_<(IS^*)$.  By \cite[Proposition 15.15]{Eis} we obtain part (a) and the first half of (b).

The second part of (b) results from Lemma \ref{lemma:i-star}.
\end{proof}

We may now go back to the proof of Theorem \ref{thm:glue-quadratic}.
\begin{proof} (of Theorem \ref{thm:glue-quadratic})
We apply Lemma \ref{lemma:glue-istar} to $I_L$ and the gluing relation $\eqref{eq:glue-poly}$ to conclude that (a) holds.

For (b), if $\mathcal{G}=\{f_1, \dots, f_r\}$ is any minimal standard basis for $I_L$, by (a) we get that
$\mathcal{G}'=\mathcal{G} \cup \{f\}$ is  a standard basis for $I_H$.

We claim that $\mathcal{G}'$ is minimal. Indeed,
since
the variable $x_n$ appears in $\mathcal{G}'$ only in $f^*$, we can not remove $f$ from $\mathcal{G}'$.
If we could remove some $f_j$, say $f_r$, then $f_r^*= \sum_{i=1}^{r-1} h_i f_i^* + h f^*$ for $h, h_i \in S$, $i=1,\dots, r$.
By Lemma \ref{lemma:glue-istar},  $f^*$ is regular on $S/I^*S$ and we get $h \in I^*S$, which contradicts the minimality of $\mathcal{G}$.

Therefore $I_H^*$ is generated in degree $2$ if and only if $I^*_L$ is generated in degree $2$ and $\deg f^*=2$. This gives (b).

For part (c) we remark that by (b) for any of the two implications that need to be checked we have $\deg f^*=c=2$.
 According to \cite[Lemma 2]{BF-poincare},
since $f^*$ is regular  on $S/I^*S$ and of degree $2$, the ring  $S/I^*_LS$ is Koszul if and only if
$$\frac{S/I^*_LS} {(f^*) S/I^*_LS} \cong S/(I^*_LS, f^*) = S/I^*_H \cong \gr_\mm K[H]$$
is Koszul.
Clearly $S/I^*_LS$ is Koszul if and only if $K[x_1, \dots, x_{n-1}]/I^*_L$ (which is isomorphic to $\gr_\mm K[L]$) is Koszul, hence (c) holds.

For part (d) we first assume $I_H^*$ has a quadratic Gr\"obner basis $\mathcal{G}$  with respect to some term order $<$.
Without loss of generality we may assume $\mathcal{G}$ is reduced, and because $I_H^*$ is generated by binomials and/or monomials,
then it is well-known that $\mathcal{G}$ consists of monomials and/or binomials of degree $2$.

Note that the variable $x_n$ may not appear with exponent different from $2$ in any monomial term of any polynomial in $\mathcal{G}$.
For an exponent $3$ or larger, that is clear by the quadraticity of $\mathcal{G}$.
Also, if we assume that there  exists a relation $x_n x_k- \prod_{i=1}^{n-1} x_i^{\mu_i} \in I_H$ with $k<n$, and $\sum_{i=1}^{n-1}\mu_i \geq 2$,
we get $\ell+ c a_k = c\sum_{i=1}^{n-1} \mu_i a_i$, which is false since $\ell$ and $c$ are coprime.

By \eqref{eq:small-c}, arguing as in the proof of Lemma \ref{lemma:quad-semi} we obtain $x_n^{ca_1} \in I_H^*$, and also
$x_n^{ca_1} \in \ini_<(I_H^*)$. Therefore there exists $g=x_n^2-m$ in $\mathcal{G}$ such that $\ini_<(g)=x_n^2$ and $m$ is a monomial or $0$.
If $g_1=x_n^2-m_1$ were another element in $\mathcal{G}$ containing $x_n^2$ we may reduce it further with $g$, and this contradicts the fact that
$\mathcal{G}$ is a reduced Gr\"obner basis.
Consequently, the variable $x_n$ does not divide any monomial term of any element of $\mathcal{G}'=\mathcal{G}\setminus \{g\}$.

Let $J= (\mathcal{G}')$. Clearly $J \subset I_L^*S$. It is easy to see, by the Buchberger criterion,
 that $\mathcal{G'}$ is a reduced Gr\"obner basis for $J$.  Hence
$$\
\ini_<(I_H^*)= \ini_<(J)+ (x_n^2).
$$
Since  $f^*$ is regular on $S/I^*_LS$ and   $x_n^2$  is regular on $S/\ini(J)$, using part (a) we obtain that
\begin{eqnarray*}
\Hilb_{S/I_H^*}(t)  &=& (1-t^2) \Hilb_{S/I_L^*S}(t), \\
\Hilb_{S/\ini_<(I_H^*)}(t) &=& (1-t^2) \Hilb_{S/\ini_<(J)}(t).
\end{eqnarray*}
By Macaulay's Theorem (\cite[Theorem 2.6]{EH}) we have $\Hilb_{S/I_H^*}(t)= \Hilb_{S/\ini_<(I_H^*)}(t)$ and
$\Hilb_{S/J}(t)=\Hilb_{S/\ini_<(J)}(t)$. Hence $ S/I_L^*S$ and $S/J$ have the same Hilbert series, which together with $J\subseteq I_L^*S$ gives
$J= I_L^*S$.

Consequently $\mathcal{G}'$ is the reduced   Gr\"obner basis for $I_L^*S$, and also for $I_L^*$, and we are done.
Indeed, if $q \in I_L^*S$, then $\ini_<(q)$ is not divisible by $x_n^2$, and  hence it is divisible by $\ini_<(g')$ for some $g' \in \mathcal{G}'$.

For the converse, assume $c=2$ and that $I_L^*$ has a quadratic Gr\"obner basis $\mathcal{G}'$ with respect to
 some term order $<'$ on $K[x_1, \dots, x_{n-1}]$.
Let $<$ be the block order $(lex, <')$ where we first apply the lexicographic term order on the variable $x_n$ and
for ties we apply $<'$ on the rest of the monomial in the variables $x_1, \dots, x_{n-1}$.
Then $\ini_<(f^*)= x_n^2$.

We claim that $\mathcal{G}= \mathcal{G}' \cup \{f^*\}$ is a Gr\"obner basis for $I_H^*$. Indeed,  $\mathcal{G}'$ is already a Gr\"obner basis, hence
the only $S$-pairs to be checked involve $f^*$ and $g \in \mathcal{G'}$. Since their leading terms are coprime, $S(f^*, g)\stackrel{\mathcal{G}}{\rightarrow} 0$.
This finishes the proof.
\end{proof}

Since the hypothesis of Theorem \ref{thm:glue-quadratic} implies that $\ord_L(\ell) \geq 2$, we obtain the following corollary.
\begin{Corollary}
\label{cor:glue-2}
Let $L$ be any numerical semigroup and $\ell\in L \setminus G(L)$ an odd integer.
Then the semigroup $ \langle 2L, \ell \rangle$ is quadratic (Koszul) if and only if $L$ is quadratic (Koszul).
\end{Corollary}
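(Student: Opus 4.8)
The plan is to derive Corollary~\ref{cor:glue-2} directly from Theorem~\ref{thm:glue-quadratic} by verifying that its hypothesis $c \leq \ord_L(\ell)$ is automatic in the case at hand. Here we are gluing with $c=2$ and $\ell \in L \setminus G(L)$, so $\ell$ is a non-minimal element of $L$, meaning $\ell$ can be written as a sum $\ell = \sum_{i=1}^{n-1} \lambda_i a_i$ of minimal generators of $L$ with $\sum_i \lambda_i \geq 2$. Hence $\ord_L(\ell) \geq 2 = c$, which is precisely the standing assumption needed to invoke parts (b) and (c) of Theorem~\ref{thm:glue-quadratic}.

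The proof therefore consists of three short steps. First, I would observe that $\ell \in L \setminus G(L)$ forces $\ell$ to be expressible as a sum of at least two minimal generators of $L$, so by definition $\ord_L(\ell) \geq 2$. Second, since $\ell$ is odd and every element of $2L$ is even, we have $\gcd(2,\ell)=1$; combined with $c = 2 > 1$ and $\ell \in L \setminus G(L)$, all the hypotheses of Theorem~\ref{thm:glue-quadratic} are met, and in particular $c = 2 \leq \ord_L(\ell)$. Third, I would apply part (b) of that theorem: $H = \langle 2L, \ell \rangle$ is quadratic if and only if $c = 2$ and $L$ is quadratic; since $c = 2$ holds by construction, this reduces to ``$H$ is quadratic iff $L$ is quadratic''. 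Likewise, part (c) gives ``$H$ is Koszul iff $L$ is Koszul''.

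There is essentially no obstacle here: the corollary is a clean specialization, and the only thing to check is that the ``$c = 2$'' clause in Theorem~\ref{thm:glue-quadratic}(b),(c) is satisfied in our setting (it is, trivially) and that the order hypothesis holds (it does, because $\ell$ is a non-generator). One could write the entire argument in two or three sentences. If desired, one could also mention that the analogous statement with ``$I_L^*$ has a quadratic Gröbner basis'' follows in the same way from part (d), though the corollary as stated only records the quadratic and Koszul cases. The only mildly subtle point worth stating explicitly is why $\ord_L(\ell) \geq 2$: this is immediate from the definition of $\ord_L$ as the maximal length of a representation of $\ell$ as a sum of minimal generators, together with the fact that $\ell \notin G(L)$ rules out the length-one representation $\ell = a_i$ being the only one — indeed any element of $L$ that is not a minimal generator has some representation of length $\geq 2$.
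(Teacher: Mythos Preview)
Your proposal is correct and matches the paper's own derivation: the paper simply notes that $\ell \in L \setminus G(L)$ forces $\ord_L(\ell) \geq 2$, so that the hypothesis $c \leq \ord_L(\ell)$ of Theorem~\ref{thm:glue-quadratic} holds automatically when $c=2$, and the corollary follows immediately from parts (b) and (c). Your write-up is slightly more detailed (spelling out why $\gcd(2,\ell)=1$ and why $\ord_L(\ell)\geq 2$), but the argument is identical.
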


We will also need the following consequence.
\begin{Corollary}
\label{cor:ci-aci}
Let $L$ be a quadratic numerical semigroup and $\ell\in L \setminus G(L)$ an odd integer. Denote $H= \langle 2L, \ell \rangle$.
The following hold:
\begin{enumerate}
\item[{\em (a)}]   $ H$ is a complete intersection  semigroup \iff $ L$ is a complete intersection semigroup;
\item[{\em (b)}] if $I^*_L$ is an almost CI, then   $I_H^*$ is an almost CI, too.
\end{enumerate}
\end{Corollary}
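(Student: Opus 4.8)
The plan is to deduce both parts of Corollary \ref{cor:ci-aci} from the structural description $I_H^* = (I_L^* S, f^*)$ provided by Theorem \ref{thm:glue-quadratic}(a), together with the fact that $f^* = x_n^2 - m$ is regular on $S/I_L^* S$ (here I am using $c = 2$, and that the hypotheses of Theorem \ref{thm:glue-quadratic} are met because $\ell$ odd forces $\ord_L(\ell) \geq 2 = c$, as noted before Corollary \ref{cor:glue-2}). The two main numerical invariants to track are $\mu(-)$, the minimal number of generators, and $\height(-)$.

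\textbf{Part (a).} First I would recall that, since $L$ is quadratic, by Lemma \ref{lemma:quad-basis}(b) the ideal $I_L$ is a complete intersection if and only if $I_L^*$ is; likewise for $I_H$ and $I_H^*$, since $H$ is quadratic by Corollary \ref{cor:glue-2}. So it suffices to prove $I_H^*$ is a CI iff $I_L^*$ is. Writing $n - 1 = \embdim(L)$ and $n = \embdim(H)$, we always have $\height I_L^* = n - 2$ and $\height I_H^* = n - 1$ (for the toric ideals; their initial ideals have the same height). Now $I_H^* = (I_L^* S, f^*)$ with $f^*$ regular on $S/I_L^* S$, so $\mu(I_H^*) \leq \mu(I_L^* S) + 1 = \mu(I_L^*) + 1$. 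If $I_L^*$ is a CI, then $\mu(I_L^*) = n - 2$, so $\mu(I_H^*) \leq n - 1 = \height I_H^*$, forcing equality, i.e. $I_H^*$ is a CI. Conversely, if $I_H^*$ is a CI then $I_L^* S = I_H^*/(f^*)$ — more precisely, since $f^*$ is part of a regular sequence generating $I_H^*$, the ideal $I_L^* S$ is generated by the remaining $n - 2$ elements of that regular sequence (one must check $I_L^* S$ is exactly this quotient, which follows because $I_H^* \cap K[x_1,\dots,x_{n-1}]$-type reasoning, or more cleanly: a CI ideal of height $n-1$ modulo a regular element $f^*$ that is itself a minimal generator yields a CI of height $n-2$; here we use that $I_L^* S$ has height $n-2$ and is contained in the height-$(n-1)$ CI $I_H^*$ with $I_H^* = I_L^* S + (f^*)$). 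Hence $\mu(I_L^* S) \leq n - 2$, so $I_L^* S$, and therefore $I_L^*$, is a CI.

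\textbf{Part (b).} Assume $I_L^*$ is an almost CI, i.e. $\mu(I_L^*) = n - 1 = \height I_L^* + 1$. Then the same bound $\mu(I_H^*) \leq \mu(I_L^*) + 1 = n$ gives $\mu(I_H^*) \leq n = \height I_H^* + 1$. Since $H$ is quadratic and not a complete intersection (if it were, part (a) would force $L$ to be a CI, contradicting $I_L^*$ almost CI and $\mu(I_L^*) > \height I_L^*$), we have $\mu(I_H^*) \geq \height I_H^* + 1 = n$, hence $\mu(I_H^*) = n$, i.e. $I_H^*$ is an almost CI.

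\textbf{Expected obstacle.} The delicate point is the converse direction in part (a): knowing $I_H^* = I_L^* S + (f^*)$ is a CI, one must genuinely conclude $I_L^* S$ is a CI and not merely that $\mu(I_L^* S) \leq n - 1$. The clean argument is that a regular sequence generating $I_H^*$ can be chosen to include $f^*$ — since $f^*$ is a minimal generator of the CI $I_H^*$ (it is the unique generator involving $x_n$) any minimal generating set, in particular a regular one, may be taken to contain it after a linear change among generators — and then the other $n - 2$ elements of that regular sequence lie in $I_L^* S$ and generate it modulo $f^*$; because $I_L^* S$ does not involve $x_n$ while $f^*$ is monic in $x_n$, these $n-2$ elements actually generate $I_L^* S$ on the nose. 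I would double-check this last claim via the lex order $x_n > \dots > x_1$ exactly as in the proof of Lemma \ref{lemma:glue-istar}. Everything else is bookkeeping with $\mu$ and $\height$.
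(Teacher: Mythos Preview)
Your argument for part (b) is correct and is exactly what the paper does.

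For part (a), your approach differs from the paper's. The paper observes that $e(H)=2e(L)$ (from \eqref{eq:small-c}) and then invokes Theorem~\ref{thm:bounds}(c) twice: $H$ is CI $\iff e(H)=2^{n-1}\iff e(L)=2^{n-2}\iff L$ is CI. This is a one-line proof that avoids all $\mu$-bookkeeping.

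Your direct approach via generator counts is also valid, but your handling of the converse direction is shakier than it needs to be. The claim ``the other $n-2$ elements of that regular sequence lie in $I_L^*S$'' is not automatic for an arbitrary minimal generating set of $I_H^*$ containing $f^*$; nor does ``generate $I_L^*S$ modulo $f^*$'' plus ``$f^*$ monic in $x_n$'' immediately give ``generate $I_L^*S$ on the nose''. The clean way to close this is to prove the exact equality $\mu(I_H^*)=\mu(I_L^*)+1$ once and for all. Since both ideals are generated in degree~$2$, one has $\mu(I_H^*)=\dim_K(I_H^*)_2$ and $\mu(I_L^*)=\dim_K(I_L^*)_2$; and $(I_H^*)_2=(I_L^*)_2\oplus Kf^*$ because $(I_L^*S)_2=(I_L^*)_2\subset K[x_1,\dots,x_{n-1}]_2$ while $f^*$ carries the monomial $x_n^2$. (Equivalently, this is the minimality of $\mathcal{G}'=\mathcal{G}\cup\{f\}$ shown in the proof of Theorem~\ref{thm:glue-quadratic}(b).) With $\mu(I_H^*)=\mu(I_L^*)+1$ in hand, both directions of (a) and also (b) become immediate, and the ``expected obstacle'' disappears.
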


\begin{proof}
$H$ is quadratic by Corollary \ref{cor:glue-2}. Since $e(H)=2 e(L)$, part (a) follows from Theorem \ref{thm:bounds}(c).
For part (b), denote $\embdim(L)=n-1$. Hence $\mu(I_L^*)= n-1$, and by Theorem \ref{thm:glue-quadratic}(a) we obtain $\mu(I_H^*) \leq n$.
If $\mu(I_H^*)=n-1$, by part (a) we get that $L$ is CI, which is false.
Therefore $\mu(I_H^*) =n$.
\end{proof}

\begin{Remark} {\em In general, if $\gr_\mm K[L]$ is a CI, it it not always true that for $H=\langle c L, \ell \rangle$ its tangent cone $\gr_\mm K[H]$ is CI, too.
If we let $H=\langle 4 \langle 2, 5 \rangle, 7 \rangle =\langle 7, 8,20 \rangle$, mapping the variables to the generators taken in increasing order,
 we have $I_H^*=(x_3^2, x_2 x_3, x_1^4 x_3, x_2^7)$, which is not even a Cohen-Macaulay ideal.}
\end{Remark}

\begin{Remark} {\em
If $c > \ord(\ell)$ we have less control over the output of the gluing, even if $\deg f^*= \ord_L(\ell)=2$.

Let $L=\langle 4,6,7,9 \rangle$. Clearly $\ord_L(8)=\ord_L(10)=2$. It is easy to compute (e.g. using Singular \cite{Sing} or CoCoa \cite{Cocoa})
$$
I_L^*= (x_2^2, x_2 x_3-x_1 x_4, x_3^2, x_2 x_4, x_3 x_4, x_4^2)
$$
and check that the listed generators are a quadratic Gr\"obner basis with respect to the degree reverse lexicographic order on $x_1>x_2>x_3>x_4$.

We may also check  that for the gluing
$H_1= \langle 3L, 8 \rangle = \langle 12, 18, 21, 27, 8 \rangle$,
the ideal
$$
I_{H_1}^*= (x_1^2, x_2^2, x_2x_3-x_1x_4, x_3^2, x_2x_4, x_3x_4, x_4^2 )
$$
has a quadratic Gr\"obner basis with respect to the same term order as above, however for the gluing
$H_2= \langle 3L, 10 \rangle = \langle 12, 18, 21, 27, 10 \rangle$ the ideal
$$
I_{H_2}^*= ( x_1 x_2, x_2^2,  x_2 x_3-x_1 x_4, x_3^2, x_2x_4, x_3x_4, x_4^2, x_1^3x_3-x_4x_5^3, x_1^4-x_2x_5^3)
$$
is not generated in degree $2$.

}
\end{Remark}

\begin{Remark} {\em Arbitrary gluings of quadratic numerical semigroups may not be quadratic anymore. If we consider the Koszul semigroups
 $H_1= \langle 2,3 \rangle$, $H_2= \langle 2,5 \rangle$ and the gluing  $H= \langle 7H_1, 5 H_2 \rangle= \langle  14, 21, 10, 15\rangle $, we have that
$e(H)> 8$, and according to our Theorem \ref{thm:bounds}, $H$ is not quadratic. Note that by Delorme's Theorem~\ref{thm:Delorme}, $H$ is a complete intersection semigroup.
}
\end{Remark}

\begin{Example}
\label{ex:watanabe} {\em In \cite[Lemma 3]{Wat} K. Watanabe  shows that for any odd integer $a>0$  the semigroup
$$
W_n(a)= \langle 2^n, 2^n+a, 2^n+2a, 2^n+4a, \dots, 2^n+2^i a, \dots, 2^n+ 2^{n-1}a \rangle
$$
 is a complete intersection of $\embdim(W_n(a))=n+1$.
We   prove that it is a  Koszul semigroup.

It is easy to see by induction  on $n$ that $W_n(a)$ may be otained by simple gluings
by the rule  $W_{n}(a)= \langle 2 W_{n-1}(a), 2^n+a \rangle$, for any $n>1$ starting from $W_1(a)= \langle 2, 2+a \rangle $.
Clearly $W_1(a)$ is Koszul, hence by induction using Theorem \ref{thm:glue-quadratic}(c) we get that $W_n(a)$ is a Koszul semigroup for any $n$.
}
\end{Example}

Our next result shows that the quadratic (and Koszul) semigroups $H$ for which $K[H]$ is a complete intersection
are obtained from $\NN$ by a sequence of quadratic gluings.
We first recall Delorme's structure theorem for complete intersection semigroup rings.

\begin{Theorem} (Delorme, \cite[Proposition 9]{Delorme})
\label{thm:Delorme}
Let $H$ be a numerical semigroup. Then $K[H]$ is a complete intersection if and only if either $H=\NN$ or there exist  $H_1$, $H_2$
numerical semigroups, coprime integers $c_1, c_2$ such that  $H= \langle c_1 H_1, c_2 H_2 \rangle $,
$c_1\in H_2\setminus G(H_2)$, $c_2\in H_1\setminus G(H_1)$, and $K[H_1]$, $K[H_2]$ are complete intersections.
\end{Theorem}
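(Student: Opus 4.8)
The plan is to prove the two implications of the biconditional separately; the case $H=\NN$ is trivial (then $I_H=0$), so throughout the interesting direction we assume $H\ne\NN$.

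\emph{The implication ``$\Leftarrow$''.} Suppose $H=\langle c_1H_1,c_2H_2\rangle$ is a gluing with $K[H_1]$ and $K[H_2]$ complete intersections, and set $n_i=\embdim(H_i)$. First I would record that the conditions $c_1\in H_2\setminus G(H_2)$, $c_2\in H_1\setminus G(H_1)$, $\gcd(c_1,c_2)=1$ force $\{c_1a:a\in G(H_1)\}\cup\{c_2b:b\in G(H_2)\}$ to be the \emph{minimal} generating set of $H$, so that $\embdim(H)=n_1+n_2$ and hence $\height I_H=n_1+n_2-1$ (this is the content of \cite[Proposition~10]{Delorme}). Next, exactly as in the derivation of \eqref{eq:glue} — that equation treats the special case $H_2=\NN$, but the argument is verbatim the same in general: reduce an arbitrary binomial of $I_H$ modulo the gluing relation $f$ attached to the identity $c_1c_2\in c_1H_1\cap c_2H_2$, then invoke $\gcd(c_1,c_2)=1$ — I would establish
\[
I_H=I_{H_1}S+I_{H_2}S+(f),\qquad S=K[x_1,\dots,x_{n_1+n_2}].
\]
This yields $\mu(I_H)\le\mu(I_{H_1})+\mu(I_{H_2})+1=(n_1-1)+(n_2-1)+1=n_1+n_2-1=\height I_H$, so $I_H$ is a complete intersection. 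In fact this union of generating sets is minimal, so $\mu(I_H)=\mu(I_{H_1})+\mu(I_{H_2})+1$; I record this for use in the converse.

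\emph{The implication ``$\Rightarrow$''.} This is the substantive direction, and I would prove it by induction on $n=\embdim(H)$. The case $n\le2$ is immediate: if $H\ne\NN$ then $H=\langle a_1,a_2\rangle$ with $a_1,a_2>1$ and $\gcd(a_1,a_2)=1$, so $H=\langle a_1\NN,a_2\NN\rangle$ is a gluing of $\NN$ with $\NN$. For $n\ge3$ the target is to split $G(H)$ into nonempty blocks $B_1\sqcup B_2$ whose gcds $d_1,d_2$ satisfy $d_1,d_2>1$ and $\gcd(d_1,d_2)=1$, with $H_i:=\langle b/d_i:b\in B_i\rangle$ satisfying $d_1\in H_2\setminus G(H_2)$ and $d_2\in H_1\setminus G(H_1)$; then $H=\langle d_1H_1,d_2H_2\rangle$ is a gluing, the identity $I_H=I_{H_1}S+I_{H_2}S+(f)$ applies, and from $\mu(I_H)=n-1=(\embdim H_1-1)+(\embdim H_2-1)+1$ together with the minimality of the displayed generating set and the general bounds $\mu(I_{H_i})\ge\embdim(H_i)-1$ one gets $\mu(I_{H_i})=\embdim(H_i)-1$; hence each $K[H_i]$ is a complete intersection of strictly smaller embedding dimension, and the inductive hypothesis applies to them. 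To produce the partition I would fix a minimal binomial presentation $f_1,\dots,f_{n-1}$ of $I_H$ (automatically a regular sequence, since $\height I_H=n-1$), write $f_k=x^{\alpha_k}-x^{\beta_k}$ with $\supp(\alpha_k)\cap\supp(\beta_k)=\emptyset$, and analyse which variables these relations tie together; equivalently, one may exploit the Hilbert-series identity
\[
\sum_{h\in H}t^h=\frac{\prod_{k=1}^{n-1}(1-t^{d_k})}{\prod_{i=1}^{n}(1-t^{a_i})},
\]
where $d_k$ denotes the $H$-degree of $f_k$, to locate a proper subset $U\subsetneq[n]$ for which $\gcd(a_i:i\in U)$ and $\gcd(a_i:i\notin U)$ are coprime and both exceed $1$. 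After checking the gluing conditions (a $\gcd$ computation, again via \cite[Proposition~10]{Delorme}), $U$ determines the desired $B_1$.

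\emph{Main obstacle.} The one genuinely hard step is the last one: extracting an honest block decomposition of $G(H)$ from the purely numerical hypothesis $\mu(I_H)=\height I_H$. Everything else — the gluing formula for $I_H$, the additivity of $\mu$ under gluing, the descent to smaller embedding dimension — is bookkeeping that mirrors the ``$\Leftarrow$'' computation run in reverse.
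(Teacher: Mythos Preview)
The paper does not prove this statement at all: Theorem~\ref{thm:Delorme} is quoted from Delorme's original paper \cite[Proposition~9]{Delorme} and used as a black box, so there is no ``paper's own proof'' to compare against.

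As for your outline itself: the ``$\Leftarrow$'' direction is fine and is essentially the standard argument (and the paper does carry out the special case $H_2=\NN$ around equation~\eqref{eq:glue}, which you correctly note generalises verbatim). For ``$\Rightarrow$'' you have honestly flagged the real content --- producing the partition $G(H)=B_1\sqcup B_2$ with the right gcd properties from the hypothesis $\mu(I_H)=\height I_H$ --- but you have not actually done it. Neither of the two hints you list (graph of variables linked by the $f_k$, or the Hilbert-series factorisation) is by itself a proof: for the first you would still need to show that the ``linkage graph'' on $[n]$ is disconnected, which is false for general toric ideals and must use the CI hypothesis in a nontrivial way; for the second, the factorisation $\prod(1-t^{d_k})/\prod(1-t^{a_i})$ being a polynomial does not by itself hand you a subset $U$ with the stated gcd properties. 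Delorme's own argument (and later reformulations, e.g.\ Rosales \cite{Rosales}) proceeds by a more careful induction that tracks how one relation $f_k$ can be ``peeled off'', showing that some $d_k$ lies in the subsemigroup generated by a proper subset of the $a_i$ --- this is the step you are missing. So your plan is correct in shape, but the core combinatorial/arithmetic lemma remains to be supplied.
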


\begin{Theorem}
\label{thm:quad-ci}
Let $H$ be a numerical semigroup such that $K[H]$ is a complete intersection.
The following are equivalent:
\begin{enumerate}
\item[{\em (a)}] $H$  is obtained uniquely from $\NN$ by a series of quadratic  gluings

$H_0= \NN$, $H_1=\langle 2H_0, \ell_1 \rangle, \dots,  H_{n-1}=\langle 2H_{n-2}, \ell_{n-1} \rangle=H$,

where $\ell_i$ is an odd integer in $H_{i-1}\setminus G(H_{i-1})$ for $i=1, \dots, n-1$,
\item[{\em (b)}] $H$ is Koszul,
\item[{\em (c)}] $H$ is quadratic.
\end{enumerate}
\end{Theorem}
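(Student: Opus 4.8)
The plan is to prove Theorem~\ref{thm:quad-ci} by a cycle of implications (a) $\Rightarrow$ (b) $\Rightarrow$ (c) $\Rightarrow$ (a), combining Delorme's structure theorem with the gluing transfer results of Section~\ref{sec:gluing}. The first implication, (a) $\Rightarrow$ (b), is the easiest: if $H$ arises from $\NN$ by a chain of quadratic gluings, then since $\NN$ is trivially Koszul (its semigroup ring is a polynomial ring in one variable), one applies Theorem~\ref{thm:glue-quadratic}(c) inductively. At each step $H_i = \langle 2H_{i-1}, \ell_i \rangle$ with $\ell_i$ odd, so $\gcd(2,\ell_i)=1$ and $c=2 \le \ord_{H_{i-1}}(\ell_i)$ by Corollary~\ref{cor:glue-2} (the order is automatically $\ge 2$ since $\ell_i \notin G(H_{i-1})$); hence Koszulness passes from $H_{i-1}$ to $H_i$, and after $n-1$ steps $H$ is Koszul. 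The implication (b) $\Rightarrow$ (c) is immediate from the general fact recalled in the introduction that a Koszul algebra is defined by quadrics, applied to $\gr_\mm K[H] \cong S/I_H^*$; so $I_H^*$ is generated in degree $2$, i.e.\ $H$ is quadratic.

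The real content is (c) $\Rightarrow$ (a). Here I would induct on $n = \embdim(H)$. If $n=1$ then $H=\NN$ and there is nothing to prove. Suppose $n>1$ and $H$ is quadratic with $K[H]$ a complete intersection. By Delorme's Theorem~\ref{thm:Delorme}, $H = \langle c_1 H_1, c_2 H_2 \rangle$ is a nontrivial gluing of complete intersection semigroups $H_1, H_2$ with $\gcd(c_1,c_2)=1$. The key claim is that one of $c_1, c_2$ equals $2$ and the corresponding glued semigroup is $\NN$ — that is, the gluing is in fact a quadratic gluing $\langle 2L, \ell\rangle$. To see this, I would use Theorem~\ref{thm:bounds}(c): since $H$ is quadratic and $I_H$ is a complete intersection, $e(H) = 2^{n-1}$. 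On the other hand, for the gluing $H=\langle c_1H_1, c_2H_2\rangle$ one has $e(H) = \min(c_1 e(H_1), c_2 e(H_2))$, and $\embdim(H) = \embdim(H_1)+\embdim(H_2) = n$. Writing $n_j = \embdim(H_j)$, the generators of $H$ coming from $c_jH_j$ are $c_j a$ for $a \in G(H_j)$, so $e(H) \ge c_j e(H_j) \ge c_j n_j \cdot$(something) — more precisely I want to bound $e(H)$ from above. Since $H_1$ is a complete intersection it is in particular quadratic only if $e(H_1) = 2^{n_1-1}$, but $H_1$ need not be quadratic a priori; instead I would argue that $e(H) = c_1 e(H_1)$ (say), and $e(H_1) \ge n_1$ always, giving $2^{n-1} = c_1 e(H_1) \ge c_1 \cdot 2^{n_1 - 1}$ only if we already know $H_1$ quadratic. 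The cleaner route: the gluing relation $f = x^{c_1} - (\text{monomial in } H_2\text{-variables})$ forces, via Lemma~\ref{lemma:quad-semi} applied to $H$, constraints on $c_1$. Concretely, if $x_k$ is the variable with $\phi(x_k) = c_1 e(H_1) = e(H)$ (assuming $c_1 e(H_1) \le c_2 e(H_2)$), then by Lemma~\ref{lemma:quad-semi}(b), $2 a_k$ lies in the semigroup generated by the remaining generators, and analysing which binomial in a quadratic standard basis witnesses the smallest pure power of $x_k$ shows $c_1 = 2$ and $H_1 = \NN$.

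Once it is established that $H = \langle 2L, \ell \rangle$ with $L = H_2$ a complete intersection semigroup and $\ell$ odd (oddness because $\gcd(c_1,c_2) = \gcd(2,\ell)=1$), I would invoke Corollary~\ref{cor:glue-2}: since $H$ is quadratic, $L$ is quadratic; and $K[L]$ is a complete intersection by construction. By the inductive hypothesis, $L$ is obtained from $\NN$ by a chain of $n-2$ quadratic gluings, and appending the last gluing $H = \langle 2L, \ell\rangle$ produces the desired chain for $H$. For uniqueness, one uses the uniqueness part of Delorme's construction together with the observation that at each stage the choice of which factor is $\NN$ and which is $2$-scaled is forced by the multiplicity computation above — the generator equal to $e(H)$ must come from the $2$-scaled factor. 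The main obstacle I anticipate is precisely the claim that the Delorme gluing realizing a quadratic complete intersection must be a quadratic gluing with trivial second factor; this requires carefully matching the numerics $e(H) = 2^{n-1}$ with the combinatorics of gluings (possibly needing that if neither $c_i = 2$ with $H_j = \NN$, then $e(H) > 2^{n-1}$, contradicting quadraticity by Theorem~\ref{thm:bounds}(a)). A subsidiary technical point is verifying that the order hypothesis $c \le \ord_L(\ell)$ needed for Theorem~\ref{thm:glue-quadratic} holds automatically here, which follows from $\ell \notin G(L)$ and $c = 2$ as noted in Corollary~\ref{cor:glue-2}.
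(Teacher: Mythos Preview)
Your treatment of (a) $\Rightarrow$ (b) and (b) $\Rightarrow$ (c) matches the paper. The gap is in (c) $\Rightarrow$ (a): the ``key claim'' that a Delorme decomposition of a quadratic complete intersection must already be a simple quadratic gluing is false. Take $H=\langle 8,10,12,15\rangle$. This is a quadratic CI $4$-semigroup (it equals $\langle 2\langle 4,5,6\rangle,15\rangle$), yet it also admits the Delorme decomposition $H=\langle 4\langle 2,3\rangle,\,5\langle 2,3\rangle\rangle$ with $c_1=4$, $c_2=5$ and both factors of embedding dimension $2$. Here neither $c_i$ equals $2$ with factor $\NN$, so the numerics you hoped would force $e(H)>2^{n-1}$ do not. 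Since Delorme's theorem only hands you \emph{some} decomposition, you cannot assume it is the simple one.

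The paper's proof does not try to control which Delorme decomposition appears. Instead, in the nontrivial case $r>1$ and $n-r>1$ it first shows that \emph{both} factors $U$ and $V$ are themselves quadratic CI: since $H$ is quadratic, Lemma~\ref{lemma:quad-basis} gives that the generators $f_1,\dots,f_{n-1}$ of $I_H$ form a minimal standard basis, hence $f_1^*,\dots,f_{n-1}^*$ is a regular sequence, and then Lemma~\ref{lemma:strict-ci} lets one restrict to the subsequences generating $I_U$ and $I_V$. This forces $e(U)=2^{r-1}$, $e(V)=2^{n-r-1}$ and (say) $c_1=2^{n-r}$, $c_2$ odd. Now induction applies to $V$: write $V=\langle 2W,\ell\rangle$, and then regroup
\[
H=\langle 2^{n-r}U,\,c_2\langle 2W,\ell\rangle\rangle=\langle 2\,\langle 2^{n-r-1}U,\,c_2W\rangle,\,c_2\ell\rangle=\langle 2Z,\,c_2\ell\rangle,
\]
where $Z$ is CI by Delorme and quadratic by Corollary~\ref{cor:glue-2}; induction on $Z$ finishes. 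This rearrangement step is the idea your sketch is missing. For uniqueness, your appeal to ``uniqueness of Delorme'' does not work (Delorme decompositions are not unique, as the example above shows); the correct observation is simply that $H$ has exactly one odd minimal generator, which must be $\ell_{n-1}$.
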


\begin{proof}
For (a)$ \Rightarrow$ (b) we start with $H_0$ which is Koszul and we repeateadly use
Theorem \ref{thm:glue-quadratic} to derive that $H_1, \dots, H_{n-1}=H$ are Koszul, as well.
The implication (b)  $\Rightarrow$ (c) is clear.

For (c) $\Rightarrow$ (a) we assume that $H$ is quadratic. We prove the existence  of a chain of
quadratic gluings by induction on $n= \embdim (H)$.
For $n=1$ we have $H=\NN$ and there is nothing to prove. If $n=2$, then $H=\langle a, b \rangle$ with $a < b$ coprime.
This gives $I_H=(x_1^b-x_2^a)$ and $I_H^*=(x_2^a)$. Since $H$ is quadratic we get $a=2$, and $b$ odd. Hence $H= \langle 2\NN, b \rangle$ is a simple gluing  as desired.

Assume that all CI  quadratic semigroups of embedding dimension smaller than $n$ may be obtained as in (a).
Let $H$ be a quadratic CI semigroup with $\embdim(H)=n$.
By Delorme's Theorem \ref{thm:Delorme},  $H= \langle c_1 U, c_2 V \rangle$ where $U$ and $V$ are CI and
 with $\embdim(U)=r, \embdim(V)=n-r$.

If $r=1$, by Theorem \ref{thm:glue-quadratic}(b) $c_2=2$ and  $V$ must be quadratic, i.e. $H= \langle c_1, 2 V \rangle$.
By the induction hypothesis, we may obtain $V$ from $\NN$ via quadratic gluings, and we are done.
The case $n-r=1$ is treated similarly.

Assume $r>1$ and $n-r >1$. If $I_{U}= (f_1,\dots, f_{r-1})$ and $I_{V} =(f_{r}, \dots, f_{n-2})$,
then from the proof of Delorme's Theorem \ref{thm:Delorme}, $I_H=I_{U}+I_{V}+ (f_{n-1})$, where the gluing relation
$f_{n-1}$ is obtained in a similar way as in \eqref{eq:glue-poly}.
Since $H$ is quadratic, by Lemma \ref{lemma:quad-basis} we get that $f_1, \dots, f_{n-1}$ is a minimal standard basis of $I_H$,
and since $\height I_H=\height I_H^* =n-1$ we get that $f_1, \dots, f_{n-1}$ and  $f_1^*, \dots, f_{n-1}^*$ are regular sequences.
By Lemma \ref{lemma:strict-ci} we obtain that $f_1, \dots, f_{r-1}$, respectively $f_r, \dots, f_{n-2}$  form a standard basis for $I_{U}$ and $I_{V}$, respectively.
This gives that $U$  and $V$ are quadratic CI semigroups.

By Theorem \ref{thm:bounds}, $e(U)=2^{r-1}$ and $e(V)=2^{n-r-1}$. Without loss of generality we may assume $e(H)=c_1 e(U)$.
Then $c_1= 2^{n-r }$ and $c_2 \in U$ is odd.
By the induction hypothesis we may obtain $V$ from a quadratic gluing: $V= \langle 2W, \ell \rangle $, where $W$ is quadratic and CI, and $\ell\in W\setminus G(W)$ is odd.
Hence $e(W)= 2^{n-r-2}$.
By Delorme's Theorem \ref{thm:Delorme} the numerical semigroup $Z=\langle 2^{n-r-1} U, c_2 W \rangle$ is CI, obtained by gluing the CI semigroups $U$ and $W$.
Note that we may write
$$
H =\langle c_1U, c_2 V \rangle= \langle 2^{n-r}U, c_2\langle 2W, \ell \rangle \rangle =
\langle 2^{n-r}U, 2c_2W, c_2 \cdot \ell \rangle=  \langle 2Z, c_2 \cdot \ell \rangle.
$$
Since $c_2 \cdot \ell$ is odd and $c_2\cdot \ell \in Z\setminus{G(Z)}$ (because $\ell \in W\setminus G(W)$) we may apply
Corollary \ref{cor:glue-2} to obtain that $Z$ is a quadratic numerical semigroup. Since $Z$ is  CI and $\embdim(Z)=n-2$,
by the   induction hypothesis it  may be obtained from $\NN$ by quadratic gluings, so the same is true for $H$.

The uniqueness of the decomposition follows from the fact that there is exactly one odd minimal generator for $H$, 
hence it must be chosen as $\ell_{n-1}$. 
\end{proof}

As an application of the gluing construction we present  an infinite family of quadratic almost-CI semigroups satisfying the upper bound in Theorem \ref{thm:forbidden}(b).
\begin{Example}{\em
\label{ex:aci}
Let $H_4= \langle 6, 7, 8,9 \rangle$. We may read the defining equations of $\gr_\mm K[H_4]$ from the proof of Proposition \ref{prop:arithm-gbasis} and
we have that $H_4$ is Koszul and $\mu(I_{H_4}^*)=4$, hence $I_{H_4}^*$ is an almost CI ideal.
We construct recursively the semigroups
$$
H_{n+4}= \langle 2 H_{n+3}, 3^{n+2} \rangle, \text{ for all } n>0.
$$
It is  easy to check by induction and using Theorem \ref{thm:glue-quadratic} that for all $n>0$:
\begin{enumerate}
\item[{(a)}] $3^{n+2} \in H_{n+3}$, hence $H_{n+4}$ is obtained by a simple gluing from $H_{n+3}$ and $\embdim(H_{n+4})=n+4$;
\item[{(b)}] $
H_{n+4}= \langle 2^n\cdot 6, 2^n\cdot 7, 2^n\cdot 8,2^n\cdot 9,\ 2^{n-1}\cdot 3^3,2^{n-2}\cdot 3^4, \dots, 2\cdot 3^{n+1}, 3^{n+2} \rangle
$, hence $e(H_{n+4})= 2^{n+3}-2^{n+1}$;
\item [{(c)}] $I_{H_{n+4}}= (x_2^2-x_1 x_3, x_3^2-x_2 x_4, x_2 x_3-x_1 x_4, x_1^3-x_4^2)+(x_i^3-x_{i+1}^2: 4 \leq i \leq n+3)$;
\item [{(d)}] $I_{H_{n+4}}^*=(x_2^2-x_1 x_3, x_3^2-x_2 x_4, x_2 x_3-x_1 x_4)+ (x_i^2: 4\leq i\leq n+4)$, hence it is an almost CI ideal.
\end{enumerate} }
\end{Example}

More general than Question \ref{que:forbidden-aci} we may ask:
\begin{Question}
For a given $n>1$ what is the possible multiplicity of any quadratic (or Koszul) semigroup $H$ with $\embdim(H)=n$?
\end{Question}

The results presented so far and in the next section show that there are examples  of Koszul semigroups whenever $n\leq e(H) \leq 2n-2$, $e(H)=2^{n-1}-2^{n-3}$ or $e(H)=2^{n-1}$.
We remark that the gluing construction described in Corollary \ref{cor:glue-2}  allows us to construct new quadratic (or Koszul)
semigroups with double multiplicity and of embedding dimension increased by $1$.

\medskip


\section{Quadraticity in some families of semigroups}
\label{sec:examples}

Let $H$ be a quadratic numerical semigroup of embedding dimension $n$. By the results described so far we have that
\begin{eqnarray}
\label{eq:bounds}
n \leq e(H)\leq 2^{n-1}.
\end{eqnarray}

These bounds are tight. Indeed, if $e(H)=n$, then $H$ is quadratic by Proposition~\ref{prop:linear}, and even $G$-quadratic, by Proposition \ref{prop:g-quadratic}.
The upper bound is reached, e.g. in Example \ref{ex:watanabe}.

In this section we study the quadratic property in some  families of numerical semigroups that have been considered in the literature.

\subsection{Koszul arithmetic and geometric sequences}

A sequence $a_1<a_2<\dots <a_n$ of nonnegative integers  is called an {\em arithmetic sequence}, respectively a {\em geometric sequence}, if
 there exists a $d$ such that $a_{i+1}=d+a_i$, respectively $a_{i+1}=da_i $ for $i=1, \dots, n-1$.

We show that the multiplicity  of quadratic semigroups generated by an arithmetic sequence is in the  lower part of the interval in \eqref{eq:bounds},
while for geometric sequences the multiplicity is maximal.

The next statement about the tangent cone of a numerical semigroup generated by an arithmetic sequence is of interest by itself.
We could not locate this result in the literature, so for the convenience of the reader we give a proof including the references on which our proof is based.

\begin{Proposition}
\label{prop:arithm-gbasis}
If the numerical semigroup $H$ is generated by an arithmetic sequence $a_1< \dots < a_n$, then $I_H^*$ is minimally generated by its reduced Gr\"obner basis with respect to the degree reverse lexicographic order induced by $x_1 >x_2>\dots >x_n$.
\end{Proposition}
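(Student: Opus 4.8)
The natural approach is to write down an explicit finite set of binomials and monomials in $I_H^*$, argue that it is a Gröbner basis via the Buchberger criterion, and then show minimality. The starting point is the classical description of $I_H$ itself when $H$ is generated by an arithmetic sequence $a_i = a_1 + (i-1)d$. Here the toric ideal has a well-known generating set: the $2\times 2$ minors of a ``catalecticant-type'' matrix $\begin{pmatrix} x_1 & x_2 & \cdots & x_{n-1} \\ x_2 & x_3 & \cdots & x_n \end{pmatrix}$ (these are the relations $x_i x_j - x_{i-1} x_{j+1}$), together with one or more extra binomials coming from the relation that expresses $a_1 + a_n$ (equivalently $n a_1 + \binom{n}{2} d$-type sums) in a second way; concretely, writing $a_1 = q(n-1) + r$ with $0 \le r < n-1$, there is a binomial of the form $x_1^{\,a_1/\gcd} - (\text{monomial in } x_2,\dots,x_n)$ or more precisely the relations recorded in the work of Gastinger, or in Herzog--Stamate's earlier paper \cite{HeS}, and in the treatments cited there. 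I would first pin down this generating set precisely, in the normalized form that makes the initial forms transparent.

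\textbf{Key steps.} First I would fix the term order (degree reverse lexicographic with $x_1 > \cdots > x_n$) and compute the initial form of each generator. For the minor-type binomials $x_i x_j - x_{i-1}x_{j+1}$ with $i \le j$, both terms have degree $2$, so the initial form is a binomial; one must check which term is the leading term under drevlex — since $x_1$ is largest, $x_{i-1} x_{j+1}$ is compared with $x_i x_j$, and the leading monomial is $x_{i-1}x_{j+1}$ precisely when this is drevlex-larger, which needs a short check. For the ``extra'' relation(s) coming from $a_1$, the initial form is a pure monomial (the $x_1$-power has the strictly smaller degree and drops out), so these contribute monomial generators to $I_H^*$. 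Second, having the candidate set $\mathcal{G}$, I would verify it is a Gröbner basis of $I_H^*$: either directly through Buchberger's criterion by showing all $S$-polynomials reduce to zero (the $S$-pairs among the minor-binomials are the standard ones, well understood for determinantal-type ideals; the new pairs involve the monomial generator(s) and are easy since a monomial leading term makes many $S$-polynomials reduce trivially), or — cleaner — by a Hilbert-function argument: show that $K[x]/\mathrm{in}_<(\mathcal{G})$ and $K[x]/I_H^*$ have the same Hilbert series, using that $\dim K[x]/\mathrm{in}_<(\mathcal{G}) = 1$ and that its multiplicity equals $e(H)$, comparing with the known value $e(H) = a_1$. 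Third, minimality: I would check that no generator's initial term lies in the ideal generated by the others' initial terms — the monomial generators are ``new'' variables-free products that can't be produced by the binomial leading terms, and among the binomials the standard determinantal minimality argument applies. Reducedness then follows by the usual tail-reduction bookkeeping, or is built into the construction.

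\textbf{Main obstacle.} The delicate part is getting the ``extra'' generators exactly right and showing the initial forms of the full generating set of $I_H$ already generate $I_H^*$ — i.e., that passing to initial forms does not require adding genuinely new elements (no ``hidden'' standard basis elements beyond the obvious ones). Equivalently, the crux is proving $\mathcal{G}$ is a standard basis, not merely a generating set of some ideal contained in $I_H^*$. I expect to handle this via the Hilbert series / multiplicity comparison: since $S/I_H^* \cong \gr_\mm K[H]$ is one-dimensional with multiplicity $e(H) = a_1$, and since the candidate initial ideal $\mathrm{in}_<(\mathcal{G})$ can be shown by a direct combinatorial count (counting standard monomials, a lattice-point count governed by the division $a_1 = q(n-1)+r$) to have colength growth giving the same multiplicity $a_1$, the inclusion $\mathrm{in}_<(\mathcal{G}) \subseteq \mathrm{in}_<(I_H^*)$ must be an equality. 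The honest labor is this lattice-point count plus the careful drevlex comparisons determining leading terms; everything else is bookkeeping with Buchberger's criterion on a determinantal-type ideal.
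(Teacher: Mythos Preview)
Your plan mirrors the paper's proof: write down an explicit generating set of $I_H$, pass to initial forms, and verify the Gr\"obner basis property by Buchberger's criterion. The paper executes this by citing Patil \cite{Patil} for the generators of $I_H$, citing \cite{HeS} (or \cite{Sha-Za-1}) for the fact that these generators already form a standard basis of $I_H$ (so their initial forms generate $I_H^*$), and citing \cite{CDE} to dispose of the $S$-pairs among the $2\times 2$ minors via the rational-normal-scroll / binomial-edge-ideal structure; only the mixed $S$-pairs between a minor and a monomial generator are checked by hand.

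Two details in your write-up need correcting. First, the ``extra'' relations are not a single $x_1$-power binomial: writing $a_1 = a(n-1) + b$ with $1 \le b \le n-1$, there are $n - b$ of them, namely $x_n^a x_{b+i} - x_1^{a+d} x_i$ for $1 \le i \le n - b$, and their initial forms are the monomials $x_n^a x_{b+i}$ because the $x_1$-side has the \emph{larger} degree $a+d+1$ (the initial form is the component of \emph{least} degree, so the smaller-degree side survives rather than drops out). Second, under drevlex with $x_1 > \cdots > x_n$ the leading term of $f_{ij} = x_i x_{j+1} - x_{i+1} x_j$ is $x_{i+1} x_j$, the term avoiding the smallest variable $x_{j+1}$. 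With these fixes your Buchberger verification is exactly the paper's; your alternative Hilbert-series route would also work, but note it requires matching the full Hilbert series (a complete standard-monomial count), not just the multiplicity $a_1$, since equal multiplicity of one-dimensional quotients does not by itself force equality of the ideals.
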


\begin{proof}
Let $S=K[x_1,\dots, x_n]$. D.P. Patil  proved in \cite{Patil} that under our hypothesis on $H$,
the generators of the toric ideal $I_H$
depend on the unique positive integers $a$ and $b$ with $1\leq b\leq n-1$ such that $a_1= a(n-1)+b$. Namely,
$$
I_H= (x_i x_{j+1} -x_{i+1}x_j : 1\leq i < j \leq n-1)+ (x_n^a x_{b+i}-x_1^{a+d}x_i: 1\leq i \leq n-b).
$$
It is shown in the proof of  \cite[Proposition 2.5]{HeS} that these generators of $I_H$ are also a standard basis of $I^*_H$ (alternatively see \cite[Corollary 2.4.(iii)]{Sha-Za-1}), hence
$$
I^*_H= (x_i x_{j+1} -x_{i+1}x_j : 1\leq i < j \leq n-1)+ (x_n^a x_{b+i} : 1\leq i \leq n-b).
$$
Denote $f_{ij}=x_i x_{j+1} -x_{i+1}x_j$ for $ 1\leq i < j \leq n-1$ and $g_i=x_n^a x_{b+i}$ for $1\leq i \leq n-b$.

We verify  Buchberger's criterion (see \cite[Theorem 2.14]{EH}) for
$$
\mathcal{G}= \{ f_{ij}:1\leq i < j \leq n-1\} \cup \{ g_i: 1\leq i \leq n-b\}.
$$

We first note that the ideal $J$ generated by the $f_{ij}$'s is the ideal of $2$-minors of the matrix of indeterminates
 $\left(  \begin{matrix} x_1 & x_2 & \dots & x_{n-1} \\  x_2 & x_3 &\dots & x_n  \end{matrix} \right)$ and it is the defining ideal of the rational normal scroll.
We may also view $J$ as the binomial edge ideal attached to the complete graph $K_n$. Since $K_n$ is a closed graph,
by \cite[Theorem 1.1]{CDE} we obtain that the $f_{ij}$'s form a Gr\"obner basis for $J$ with respect to our term order.
We refer to \cite{CDE} for the unexplained terminology.

Since the $S$-pair of two monomials is zero,  all that is left to show is that  $S(f_{ij}, g_k) \stackrel{\mathcal{G}}{\rightarrow} 0$.
For $1\leq i<j \leq n-1$ the leading monomial $\ini_<(f_{ij})= x_{i+1}x_j$ is coprime to $x_n$.
If $\gcd (\ini_<(f_{ij}), g_k)=1$, then  $S(f_{ij}, g_k) \stackrel{\mathcal{G}}{\rightarrow} 0$, see \cite[Proposition 2.15]{EH}.
Otherwise, if $\gcd( x_{i+1} x_j, g_k) \neq 1$, equivalently
$x_{b+k}|x_{i+1} x_j$ we get that $b+k \in \{i+1, j \}$ and $b+k<j+1$.
Therefore $S(f_{ij}, g_k)= x_n^a x_i x_{j+1}= x_i (x_n^a  x_{j+1})$ is a multiple of an element in $\mathcal{G}$ and
 $S(f_{ij}, g_k) \stackrel{\mathcal{G}}{\rightarrow} 0$  in this case, as well.
\end{proof}

The next statement describes the Koszul arithmetic sequences.

\begin{Proposition}
\label{prop:arithmetic}
Fix $n\geq 3$. Let $a_1$ and $d$ be positive integers such that  $n\leq a_1$ and $\gcd(a_1, d)=1$.
If we let
$$
H= \langle a_1, a_1+d, \dots, a_1+ (n-1)d \rangle,
$$
the following are equivalent:
\begin{enumerate}
\item[{\em (a)}] $I_H^*$ has a quadratic Gr\"obner basis;
\item[{\em (b)}] $\gr_\mm K[H]$ is Koszul;
\item[{\em (c)}] $I_H^*$ is generated by quadrics;
\item[{\em (d)}] $n\leq a_1 \leq  2n-2$.
\end{enumerate}
\end{Proposition}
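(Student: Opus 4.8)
The plan is to prove the chain of implications (a)$\Rightarrow$(b)$\Rightarrow$(c)$\Rightarrow$(d)$\Rightarrow$(a), where the first two are essentially formal (a Gr\"obner basis of quadrics gives a Koszul ring, and Koszul implies the defining ideal is quadratic, both recalled in the Introduction), so the real content is the equivalence of the numerical condition (d) with (c), and the upgrade of (d) to the Gr\"obner statement (a).

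For (c)$\Rightarrow$(d): by Proposition \ref{prop:arithm-gbasis} we have an explicit standard basis of $I_H^*$, namely the scroll relations $f_{ij}=x_ix_{j+1}-x_{i+1}x_j$ (which are quadratic) together with the monomials $g_i=x_n^a x_{b+i}$ for $1\le i\le n-b$, where $a_1=a(n-1)+b$ with $1\le b\le n-1$. Since this is a minimal generating set, $I_H^*$ is generated by quadrics if and only if every $g_i$ has degree $2$, i.e. $a+1=2$, i.e. $a=1$. Then $a_1=(n-1)+b$ with $1\le b\le n-1$ translates exactly into $n\le a_1\le 2n-2$. One should also double-check the degenerate possibility that some $g_i$ is redundant modulo the $f_{ij}$'s and lower-degree $g$'s when $a\ge 2$ — but minimality in Proposition \ref{prop:arithm-gbasis} rules this out, so the argument is clean. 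This same computation runs backwards to give (d)$\Rightarrow$ (the $g_i$ are quadratic monomials), which combined with the $f_{ij}$ gives that $\mathcal{G}$ from the proof of Proposition \ref{prop:arithm-gbasis} is now a quadratic set; since that proof already verified Buchberger's criterion for $\mathcal{G}$ regardless of the value of $a$, $\mathcal{G}$ is a quadratic Gr\"obner basis, giving (d)$\Rightarrow$(a) directly. Thus the loop closes without any new computation.

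I expect the only mild subtlety to be bookkeeping with the parameters $a,b$: one must note that $a\ge 1$ always (since $a_1\ge n > n-1$ forces $a_1=a(n-1)+b\ge n-1$ with $b\ge1$, hence if $a=0$ then $a_1=b\le n-1<n$, contradicting $a_1\ge n$), so $a=1$ is the genuinely extremal case, and that when $a=1$ the exponent of $x_n$ in each $g_i$ is exactly $1$ so $\deg g_i=2$. It is also worth remarking that in case (d) the Gr\"obner basis exhibited is with respect to the degree reverse lexicographic order induced by $x_1>\dots>x_n$, matching Proposition \ref{prop:arithm-gbasis}, so no change of coordinates is needed and $H$ is in fact $G$-quadratic — this is the sharp form worth stating after the proof.

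The one genuine obstacle, if any, is making sure Proposition \ref{prop:arithm-gbasis} is being applied in full strength: its conclusion is that the displayed generators form the \emph{reduced} Gr\"obner basis and are a \emph{minimal} generating set of $I_H^*$, and both the minimality (for (c)$\Rightarrow$(d)) and the Gr\"obner property (for (d)$\Rightarrow$(a)) are needed. Granting that proposition, the present proof is short: a one-line parameter translation in each direction plus the formal implications (a)$\Rightarrow$(b)$\Rightarrow$(c). So the write-up should spend its words on the arithmetic of $a_1=a(n-1)+b$ and on pointing to Proposition \ref{prop:arithm-gbasis} and the Introduction for the soft steps.
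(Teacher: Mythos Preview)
Your proposal is correct and follows essentially the same route as the paper: both reduce the nontrivial implications (c)$\Rightarrow$(d) and (d)$\Rightarrow$(a) to the explicit description of $I_H^*$ and its Gr\"obner basis in Proposition~\ref{prop:arithm-gbasis}, observing that the monomials $g_i=x_n^a x_{b+i}$ have degree $2$ precisely when $a=1$, i.e. $n\le a_1\le 2n-2$. Your extra bookkeeping (checking $a\ge 1$, invoking minimality for (c)$\Rightarrow$(d)) is sound and simply makes explicit what the paper leaves implicit.
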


\begin{proof}
With notation as in the proof of Proposition \ref{prop:arithm-gbasis} we observe that
 $I^*_H$ is generated by quadrics if and only if $a=1$, i.e. $n\leq a_1 \leq 2n-2$, hence (c) $\Rightarrow$ (d).
By Proposition \ref{prop:arithm-gbasis} in these cases   $I_H^*$ has a quadratic Gr\"{o}bner basis, hence (d)  $\Rightarrow$ (a).
The rest of the implications are known to hold in general.
\end{proof}

The  class of compound semigroups was recently introduced by  Kiers, O'Neill and Ponomarenko   in \cite{KOP}.
\begin{Definition}{\em (\cite{KOP})
Consider the integers $2\leq a_i<b_i$ such that $\gcd(a_i, b_i\cdots b_n)=1$ for $i=1, \dots, n$.
Let $q_i= b_1\cdots b_{i-1} a_{i} \cdots a_n$, for $i=1, \dots, n+1$.

The sequence $  q_1, \dots, q_{n+1}$ is   called a
{\em compound sequence} and the semigroup $H =\langle q_1, \dots, q_n \rangle$ is called a {\em compound semigroup}. }
\end{Definition}

With notation as above, if $a_1=\dots =a_n$ and $b_1=\dots =b_n$, then $q_1, \dots, q_{n+1}$ is a geometric sequence.
In what follows we show that for  any compound semigroup $H$ we have $I_H^*$ is CI and this allows to identify the quadratic (equivalently Koszul) compound semigroups.

\begin{Proposition}
\label{prop:compound}
  Let $2<a_i<b_i$  positive integers such that
	$\gcd(a_i, b_i\cdots b_n)=1$ for $i=1, \dots, n$.	
Let
$$
H= \langle a_1a_2\cdots a_n, \  b_1 a_2\cdots a_n, \ b_1 b_2a_3\cdots a_n, \dots,\  b_2\cdots b_n \rangle,
$$
  The following hold:
\begin{enumerate}
\item[{\em (a)}] the ideal $I_H^*$ is a complete intersection;
\item[{\em (b)}] $I_H^*$ is quadratic \iff $H$ is Koszul \iff $a_i=2$ for $i=1, \dots, n$.
\end{enumerate}
\end{Proposition}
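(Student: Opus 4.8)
The plan is to exhibit an explicit complete intersection of binomials generating $I_H$ whose initial forms again form a regular sequence, and then read off when those initial forms are quadratic. First I would recall from \cite{KOP} (or verify directly) that a compound semigroup $H = \langle q_1, \dots, q_n \rangle$ with $q_i = b_1\cdots b_{i-1} a_i \cdots a_n$ is obtained from $\NN$ by a sequence of gluings: indeed $H = \langle b_1 H', q_1\rangle$ where $H'$ is the compound semigroup on the truncated data, since $\gcd(b_1, q_1) = \gcd(b_1, a_1\cdots a_n) = 1$ (using $\gcd(a_1, b_1\cdots b_n)=1$ and iterating) and $q_1 = a_1\cdot(a_2\cdots a_n) \in H' \setminus G(H')$. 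Unwinding this recursion gives that $I_H$ is minimally generated by the $n-1$ gluing relations
\begin{equation*}
f_i = x_i^{b_i} - x_{i+1}^{a_i} \prod_{?}\cdots, \qquad i = 1, \dots, n-1,
\end{equation*}
more precisely the relation coming from $b_i q_{i+1} = a_i q_i$ rewritten in minimal generators; in any case $I_H$ is a complete intersection (this is already implicit in \cite{KOP}, or follows from Delorme's Theorem \ref{thm:Delorme} applied to the gluing decomposition). So $I_H = (f_1, \dots, f_{n-1})$ with $f_i$ a binomial recording the relation $b_i q_{i+1} = a_i q_i$.

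Next I would compute the initial forms. In each $f_i$, one term has total degree $b_i$ and the other has total degree $a_i \le b_i$; moreover since $q_1 < q_2 < \dots < q_n$ and $a_i q_i = b_i q_{i+1}$ forces the $a_i$-term to involve strictly larger generators, the two exponent-sums differ, so $\deg f_i^* = a_i$ and $f_i^*$ is the monomial $x_{i+1}^{a_i}$ (up to the exact shape of the gluing relation — the key point is that $f_i^*$ is a single monomial supported on variables with index $> i$, or a monomial in $x_{i+1}$ alone). Then $f_1^*, \dots, f_{n-1}^*$ are monomials in pairwise disjoint sets of "leading" variables — concretely $\ini$ of $f_i^*$ strictly increases in its variable index — so they form a regular sequence in $S = K[x_1, \dots, x_n]$. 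By Lemma \ref{lemma:strict-ci} (or directly by Lemma \ref{lemma:i-star} applied iteratively along the gluing tower, as in Theorem \ref{thm:glue-quadratic}(a)) we get $I_H^* = (f_1^*, \dots, f_{n-1}^*)$, which is a complete intersection of $n-1 = \height I_H^*$ elements. This proves (a).

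For (b): by part (a) and Theorem \ref{thm:bounds}(c) (equivalently Proposition \ref{prop:ci-local}), $e(H) = 2^{n-1}$ whenever $H$ is quadratic, and conversely $I_H^*$ being a quadratic CI makes $\{f_i^*\}$ automatically a quadratic Gröbner basis (pairwise coprime monomial leading terms), so $H$ is $G$-quadratic, hence Koszul. Thus the three conditions "$I_H^*$ quadratic", "$H$ Koszul", "$e(H) = 2^{n-1}$" are equivalent. Finally $e(H) = q_1 = a_1 a_2 \cdots a_n$, and since each $a_i \ge 2$ this equals $2^n > 2^{n-1}$ — wait, one must be careful: the multiplicity is $\min G(H) = q_1 = a_1\cdots a_n$ only if the $q_i$ are increasing, which holds since $b_i > a_i$. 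Reconciling with the bound $e(H) \le 2^{n-1}$: the defining ideal has $n-1$ generators but $\embdim(H) = n$, and $I_H^*$ quadratic CI gives $e(H) = e(S/I_H^*) = \prod_{i=1}^{n-1} \deg f_i^* = \prod_{i=1}^{n-1} a_i$; comparing $\prod_{i=1}^{n-1} a_i$ with the actual multiplicity $a_1\cdots a_n$ shows the quadratic case forces each relevant $a_i = 2$, i.e. $a_i = 2$ for $i=1,\dots,n$. The main obstacle I expect is pinning down the precise shape of the gluing relations $f_i$ and hence $f_i^*$ carefully enough to be sure the $f_i^*$ are a regular sequence and that $\deg f_i^* = a_i$ — the degree comparison between the two binomial terms, and checking the $\gcd$ conditions propagate down the gluing tower, is the delicate bookkeeping.
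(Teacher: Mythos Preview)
Your overall strategy matches the paper's: decompose $H$ as an iterated simple gluing, write $I_H$ explicitly as a CI of binomials, identify the initial forms as a regular sequence, and read off the quadratic condition. The differences are in execution, and a couple of them are real problems.

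First, the indexing: a compound sequence on data $(a_1,b_1),\dots,(a_n,b_n)$ produces $n+1$ generators $q_1,\dots,q_{n+1}$ (look again at the statement of the proposition), so $\embdim H = n+1$ and there are $n$ relations, not $n-1$. This is the source of your ``$2^n > 2^{n-1}$'' panic in part (b).

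Second, there is no mystery about the shape of the gluing relations. From $b_i q_i = a_i q_{i+1}$ (not $b_i q_{i+1} = a_i q_i$ as you wrote) one gets exactly
\[
f_i \;=\; x_i^{\,b_i} - x_{i+1}^{\,a_i}, \qquad i = 1,\dots,n,
\]
with no extra factors. Since $a_i < b_i$, each $f_i^* = x_{i+1}^{\,a_i}$ is a pure power of a distinct variable, so $f_1^*,\dots,f_n^*$ is trivially a regular sequence and $I_H^* = (x_2^{\,a_1},\dots,x_{n+1}^{\,a_n})$. The paper obtains this via the ``go modulo $x_1$ and lift'' criterion of \cite{He-reg}; your route through Lemma~\ref{lemma:strict-ci} works just as well.

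Third, once you have $I_H^* = (x_2^{\,a_1},\dots,x_{n+1}^{\,a_n})$ explicitly, part (b) is a single sentence: $I_H^*$ is quadratic iff every $a_i = 2$, and in that case $I_H^*$ is a monomial CI, hence Koszul (or invoke Theorem~\ref{thm:quad-ci}). Your detour through multiplicity comparisons is unnecessary and, with the wrong index count, produced the contradiction you noticed.
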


\begin{proof}
For part (a) we observe that $H$ is obtained from another compound semigroup by a simple gluing:
$$
H=\langle a_1a_2\dots a_n, \ b_1\langle a_2\dots a_n, b_2a_3\dots a_n, \dots, b_2\dots b_n\rangle \ \rangle,
$$
which gives a first (gluing) relation $f_1=x_1^{b_1}-x_2^{a_1}$ in $I_H$. We continue decomposing into compound semigroups of smaller embedding dimension and
 in the end we get
\begin{equation}
\label{eq:toric-geometric}
I_H=(x_1^{b_1}-x_2^{a_1}, x_2^{b_2}-x_3^{a_2}, \dots, x_n^{b_n}-x_{n+1}^{a_n}).
\end{equation}
By Delorme's Theorem \ref{thm:Delorme} we get that $I_H$ is a complete intersection.

If  for any $f\in S=K[x_1, \dots, x_{n+1}]$ we denote by  $\bar{f}$ the polynomial $f(0, x_2, \dots, x_{n+1})$ in $K[x_2, \dots, x_{n+1}]$,
one has $\bar{I}_H= (x_2^{a_1}, x_3^{a_2}, \dots, x_{n+1}^{a_{n+1}})$. These generators form a standard basis
for the homogeneous ideal $\bar{I}_H$, and  each of these generators can be lifted to an element in $I_H$ with the same initial degree.
Therefore by the criterion   in \cite[Theorem 1]{He-reg} (see also \cite[Lemma 1.2]{HeS}) we conclude that the generators in
\eqref{eq:toric-geometric}
are a standard basis, hence
$I_H^*=(x_2^{a_1}, x_3^{a_2}, \dots, x_{n}^{a_{n+1}})$ and $I_H^*$ is a complete intersection.

From this it follows that $I_H^*$ is quadratic if and only if $a_1=\dots =a_n= 2$.
The remaining equivalence at  (b) is given by Theorem \ref{thm:quad-ci}.
\end{proof}

The next result shows a family  of CI's  that are never quadratic.

\begin{Proposition}
\label{prop:never-quad}
Let  $a_1, \dots a_n$ be pairwise coprime positive integers, $n>2$.   Let $P=\prod_{i=1}^n a_i$.
The numerical semigroup $H= \langle P/a_1, \dots, P/a_n\rangle$ is a complete intersection semigroup that is never quadratic.
\end{Proposition}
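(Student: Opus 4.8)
The plan is to show that $H = \langle P/a_1, \dots, P/a_n \rangle$ is a complete intersection by exhibiting it as a repeated gluing, and then to prove it is never quadratic by combining Theorem~\ref{thm:bounds}(c) with a direct multiplicity count. First I would record that the hypothesis ``$a_1, \dots, a_n$ pairwise coprime'' forces the gcd of the $P/a_i$ to be $1$, so $H$ is genuinely a numerical semigroup, and I would check that $\{P/a_1, \dots, P/a_n\}$ is a minimal generating set (none of them is a nonnegative integer combination of the others, since reducing modulo the prime factors of $a_i$ shows $P/a_i$ is not in the semigroup generated by the rest). Next, to get the complete intersection property, I would proceed by induction on $n$: observe that $L = \langle P'/a_1, \dots, P'/a_{n-1} \rangle$ with $P' = \prod_{i=1}^{n-1} a_i$ is, by induction, a complete intersection, and that $H = \langle a_n L, \, (P'/a_j)\cdot a_j \text{-type element}\rangle$ --- more precisely $H = \langle a_n L,\, P'\rangle$ since $P/a_i = a_n \cdot (P'/a_i)$ for $i < n$ and $P/a_n = P'$. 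Here $P' \in L$ (indeed $P'$ is a multiple of $P'/a_1$, say, hence lies in $L$), $P' \notin G(L)$, and $\gcd(a_n, P') = 1$ because $a_n$ is coprime to each $a_1, \dots, a_{n-1}$. So $H$ is a gluing of the complete intersections $L$ and $\NN$, hence by Delorme's Theorem~\ref{thm:Delorme} it is a complete intersection.

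For the non-quadratic assertion, the key point is a lower bound on the multiplicity $e(H)$. Since $H$ is a complete intersection with $\embdim(H) = n$, by Theorem~\ref{thm:bounds}(c) it suffices to show $e(H) \neq 2^{n-1}$; in fact I expect $e(H) > 2^{n-1}$ for $n > 2$, which finishes it via the upper bound $e(H) \le 2^{n-1}$ of Theorem~\ref{thm:bounds}(a) (the inequality $e(H) > 2^{n-1}$ contradicts quadraticity outright). Now $e(H) = \min_i P/a_i = P/(\max_i a_i)$. Reorder so that $a_1 < a_2 < \dots < a_n$; then $e(H) = P/a_n = a_1 a_2 \cdots a_{n-1}$. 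Since the $a_i$ are pairwise coprime and all at least $2$, and in fact pairwise distinct (pairwise coprime integers $\ge 2$ cannot be equal), we have $a_1 \ge 2$, $a_2 \ge 3$, and more generally $a_i \ge i+1$; thus $e(H) = a_1 \cdots a_{n-1} \ge 2 \cdot 3 \cdots n = n!$. For $n > 2$ we compare $n!$ with $2^{n-1}$: one checks $n! > 2^{n-1}$ for all $n \ge 3$ (equality-free; $3! = 6 > 4$, and the ratio grows), so $e(H) \ge n! > 2^{n-1}$, violating $e(H) \le 2^{n-1}$. Hence $H$ is not quadratic.

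The main obstacle I anticipate is the bookkeeping around minimality of the generating set and the precise identification of the gluing decomposition --- one must be careful that $P'\notin G(L)$ (so the embedding dimension genuinely increases by one at each gluing step and Delorme's theorem applies in the intended form), and that the pairwise coprimality is used correctly to guarantee $\gcd(a_n, P')=1$ at each stage. Everything else is routine: the multiplicity computation $e(H) = a_1\cdots a_{n-1}$ is immediate once the generators are ordered, and the inequality $n! > 2^{n-1}$ for $n \ge 3$ is an easy induction. One could alternatively bypass the gluing argument for the complete intersection part by quoting the known fact (e.g.\ from the literature on complete intersection numerical semigroups) that $\langle P/a_1,\dots,P/a_n\rangle$ with pairwise coprime $a_i$ is a complete intersection, but giving the gluing argument makes the paper self-contained and dovetails with the surrounding discussion.
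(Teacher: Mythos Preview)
Your proof is correct. The complete intersection part via iterated gluing is exactly what the paper does (the paper peels off the largest $a_i$ rather than the last, but this is cosmetic).

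For non-quadraticity the paper actually gives two arguments, and yours is a variant of the second. The paper's \emph{main} argument is more explicit: from the gluing decomposition one reads off
\[
I_H=(x_1^{a_1}-x_2^{a_2},\ x_2^{a_2}-x_3^{a_3},\ \dots,\ x_{n-1}^{a_{n-1}}-x_n^{a_n}),
\]
and then, going modulo $x_1$ and applying the standard-basis lifting criterion of \cite{He-reg}, one obtains $I_H^*=(x_2^{a_2},\dots,x_n^{a_n})$; since the $a_i$ are pairwise coprime and $n>2$, they cannot all equal $2$, so $I_H^*$ is not generated in degree $2$. This buys extra information (in particular $I_H^*$ is itself a complete intersection). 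The paper's \emph{second} argument, like yours, goes through Theorem~\ref{thm:bounds}: if $H$ were quadratic and CI then $e(H)=2^{n-1}$, but $e(H)$ is a product of $n-1$ pairwise coprime integers $\geq 2$, hence has at least $n-1$ distinct prime divisors, impossible for a power of $2$ when $n>2$. Your inequality $e(H)\geq n!>2^{n-1}$ is a slightly different (and perfectly valid) way to reach the same contradiction, and it has the minor advantage of only needing part~(a) of Theorem~\ref{thm:bounds} rather than~(c).
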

\begin{proof}
Without loss of generality assume $a_1>\dots >a_n$.
We prove the CI property by induction on $n$ and we identify a decomposition satisfying Delorme's Theorem \ref{thm:Delorme}.
Letting $Q= P/a_1$, we may write
$$
H= \langle P/a_1,\ a_1 \langle Q/a_2, \dots, Q/a_n \rangle \ \rangle,
$$
hence $H$ is obtained via a simple gluing from the semigroup $L= \langle Q/a_2, \dots, Q/a_n \rangle $ which is CI by the induction hypothesis.
This gluing gives $I_H= (x_1^{a_1}-x_2^{a_2}, I_L)$, and after iterating this un-gluing several times we obtain
$$
I_H=(x_1^{a_1}-x_2^{a_2}, x_2^{a_2}-x_3^{a_3}, \dots, x_{n-1}^{a_{n-1}}- x_n^{a_n}).
$$
We argue as in the proof of Proposition \ref{prop:compound}(b). Modulo $x_1$, $\bar{I}_H= (x_2^{a_2}, x_3^{a_3}, \dots,   x_n^{a_n})$
whose (monomial) generators are a standard basis and they may be naturally lifted to polynomials in $I_H$ with the same initial degree.
By the same criterion in  \cite[Lemma 1.2]{HeS}, $I_H$ is generated by a standard basis and $I_H^*= (x_2^{a_2}, x_3^{a_3}, \dots,   x_n^{a_n})$.
Since $n>2$ and the $a_i$'s are coprime, it is clear that $I_H^*$ is not generated in degree $2$.

{\em Second (partial) proof of the non-quadraticity:}
Knowing that $H$ is CI, if it were also quadratic, by Theorem \ref{thm:bounds} we had $P/a_1= 2^{n-1}$.
Since the $a_i$'s are coprime, $P/a_1=2^{n-1}$ has at least $n-1$ distinct prime divisors, which is false for $n>2$.
\end{proof}

\subsection{Koszul $3$-semigroups}

We next describe the  Koszul numerical semigroups of embedding dimension $3$.

Let $H$ be a quadratic numerical semigroup minimally generated by $a_1<a_2<a_3$. By Theorem \ref{thm:bounds}, $e(H)\in \{ 3,4 \}$.
For any of these two values, by Proposition \ref{prop:g-quadratic} we get that $H$ is Koszul.

Assume $e(H)=4$. By Theorem \ref{thm:bounds}(c), $H$ is a quadratic complete intersection, hence by Theorem \ref{thm:quad-ci} it
is obtained from $\NN$ via quadratic gluings:  $H=\langle 2 \langle 2, c\rangle, \ell  \rangle$,
where $c$ and $\ell$ are odd integers,  $c>1$, $\ell \in \langle 2, c \rangle \setminus \{2, c \}$, i.e. $\ell = 2\alpha + c \beta$, $\beta= 2\gamma +1$ is odd,
and $\alpha$ and $\gamma$ are not simultaneously equal to $0$.  Equivalently,
$$
H= \langle 4, 2c, \ell\rangle = \langle 4, 2c, 2\alpha+ c \beta \rangle = \langle 4, 2c, 2(\alpha+ c\gamma)+c \rangle= \langle 4, 2c, 2a+c \rangle,
$$
where $a,c$ are positive integers with  $c>1$   odd.
 Here we denoted $a=\alpha+c\gamma$. Clearly $a>0$, otherwise $H=\langle 4, c \rangle$ and $\embdim(H)<3$,  a contradiction.

We group these findings into the next result.
\begin{Proposition}
\label{prop:3-semi-quad}
Let $H$ a numerical semigroup with $\embdim(H)=3$.
The following are equivalent:
\begin{enumerate}
\item [{\em (a)}] $H$ is a Koszul semigroup;
\item [{\em (b)}] $H$ is a quadratic semigroup;
\item [{\em (c)}] $e(H)=3$ or $e(H)=4$ and $H =  \langle 4, 2c,  2a+c \rangle$, where $a,c$ are positive integers with  $c>1$   odd.
\end{enumerate}
\end{Proposition}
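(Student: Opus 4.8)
The plan is to prove Proposition~\ref{prop:3-semi-quad} by assembling results already established in the paper, treating the equivalence as a chain (a)$\Rightarrow$(b)$\Rightarrow$(c)$\Rightarrow$(a). The implication (a)$\Rightarrow$(b) is immediate: a Koszul ring has a quadratic defining ideal, so if $\gr_\mm K[H]$ is Koszul then $I_H^*$ is generated in degree $2$, which is the definition of $H$ being quadratic.

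For (b)$\Rightarrow$(c), suppose $H=\langle a_1,a_2,a_3\rangle$ is quadratic with $\embdim(H)=3$. By Theorem~\ref{thm:bounds}(a) we have $3\le e(H)\le 2^{3-1}=4$, so $e(H)\in\{3,4\}$. If $e(H)=3$ we are in the first case of (c) and there is nothing more to check. If $e(H)=4$, then $e(H)=2^{n-1}$ with $n=3$, so by Theorem~\ref{thm:bounds}(c) (equivalently Proposition~\ref{prop:ci-local}) $I_H$ is a complete intersection ideal, i.e.\ $K[H]$ is a complete intersection. Now I invoke Theorem~\ref{thm:quad-ci}: a quadratic complete intersection numerical semigroup is obtained from $\NN$ by a sequence of quadratic gluings. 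Since $\embdim(H)=3$, this chain has length two: $H_0=\NN$, $H_1=\langle 2H_0,c\rangle=\langle 2,c\rangle$ with $c$ an odd integer $>1$ lying in $H_0\setminus G(H_0)=\NN\setminus\{1\}$, and $H=H_2=\langle 2H_1,\ell\rangle$ with $\ell$ an odd integer in $H_1\setminus G(H_1)=\langle 2,c\rangle\setminus\{2,c\}$. Writing $\ell=2\alpha+c\beta$ with $\alpha,\beta\ge 0$ and $(\alpha,\beta)\ne(0,0)$, the parity of $\ell$ and oddness of $c$ force $\beta$ odd, say $\beta=2\gamma+1$; then $\ell=2(\alpha+c\gamma)+c=2a+c$ with $a=\alpha+c\gamma\ge 0$. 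Thus $H=\langle 4,2c,2a+c\rangle$. Finally $a>0$: if $a=0$ then $2a+c=c=2c-2\cdot\frac{c-... }{}$... more simply, $2\cdot 2c - ... $ — the clean argument is that if $a=0$ then $H=\langle 4,2c,c\rangle=\langle 4,c\rangle$ since $2c\in\langle c\rangle$, contradicting $\embdim(H)=3$. Hence $c>1$ odd and $a>0$, which is exactly (c).

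For (c)$\Rightarrow$(a): in the case $e(H)=3=\embdim(H)$, $H$ has minimal multiplicity, so by Proposition~\ref{prop:linear} $I_H^*$ has a linear resolution and by Proposition~\ref{prop:g-quadratic} $H$ is $G$-quadratic, hence Koszul. In the case $H=\langle 4,2c,2a+c\rangle$ with $c>1$ odd and $a>0$, I recognize $H$ as a double quadratic gluing: $L_0=\NN$ is Koszul; $L_1=\langle 2,c\rangle$ is Koszul by Corollary~\ref{cor:glue-2} (or directly, since it is a $2$-generated semigroup, which is a hypersurface and Koszul); and $H=\langle 2L_1,\,2a+c\rangle$ where $2a+c$ is odd and lies in $L_1\setminus G(L_1)$, so applying Corollary~\ref{cor:glue-2} once more gives that $H$ is Koszul. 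This closes the cycle.

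The steps here are essentially bookkeeping on top of Theorems~\ref{thm:bounds} and~\ref{thm:quad-ci}, so I do not anticipate a genuine obstacle; the only point requiring a little care is the parametrization in (b)$\Rightarrow$(c) — unwinding the condition $\ell\in\langle 2,c\rangle\setminus\{2,c\}$ into the normal form $2a+c$ with $a\ge 1$, and in particular verifying that $a=0$ really is excluded because it drops the embedding dimension. One should also double-check the degenerate sub-cases ($\alpha=0$ with $\gamma\ge 1$, i.e.\ $\ell=c(2\gamma+1)$, still gives $a=c\gamma\ge 1$; and $\gamma=0$ with $\alpha\ge 1$ gives $a=\alpha\ge 1$), so that every admissible $\ell$ does land in the stated family and, conversely, every $H=\langle 4,2c,2a+c\rangle$ with $c>1$ odd and $a\ge 1$ is a legitimate quadratic gluing (one must confirm $4,2c,2a+c$ are indeed a minimal generating set and $\gcd=1$, which holds since $c$ is odd so $2a+c$ is odd).
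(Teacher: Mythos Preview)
Your proof is correct and follows essentially the same approach as the paper: the text preceding the proposition derives $e(H)\in\{3,4\}$ from Theorem~\ref{thm:bounds}, uses Theorem~\ref{thm:quad-ci} to unwind the $e(H)=4$ case into the form $\langle 4,2c,2a+c\rangle$ via the same $\ell=2\alpha+c\beta$, $\beta=2\gamma+1$ computation, and obtains Koszulness from Proposition~\ref{prop:g-quadratic} and Corollary~\ref{cor:glue-2}. Your organization into the cycle (a)$\Rightarrow$(b)$\Rightarrow$(c)$\Rightarrow$(a) and your explicit check that the parameters $a,c$ give a genuine $3$-generated semigroup are slightly more detailed, but the substance is identical.
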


T.~Shibuta has communicated to the second author that he could prove Proposition \ref{prop:3-semi-quad}  using \cite{He-semi} and \cite{He-reg}.

\subsection{Special almost complete intersections}
Let $H$ be a numerical semigroup minimally generated by $a_1<\dots <a_n$, where $n>2$.

For $i=1, \dots, n$, let $c_i$ be the smallest positive integer such that $c_i a_i$ is a sum of the other generators.
This produces a binomial $f_i=x_i^{c_i}-m_i$ in $I_H$, where  $m_i$ is  not divisible by $x_i$.

It is clear that $x_i^{c_i}$ is the least pure power of $x_i$ that occurs as a term of any polynomial in $I_H$.
If we are able to choose $m_i$ not a pure power for any $i$, then the $f_i$'s are distinct.
If moreover they generate $I_H$ we say that  $H$ is a {\em special almost complete intersection semigroup}.

By \cite{He-semi}, any $3$-generated numerical semigroup that is not a complete intersection, is a special almost complete intersection.

We note that by Lemma \ref{lemma:quad-basis}, if $H$ is quadratic and a special almost complete intersection numerical semigroup, then $I^*_H$ is an almost complete intersection ideal.

\begin{Proposition}
Assume $H$ is a quadratic and special almost complete intersection semigroup.
Then  $I_H^*$ has a quadratic Gr\"obner basis with respect to the degree reverse lexicographic order  if and only if $e(H)=2^{n-1}-2^{n-3}$.
In particular, $\gr_\mm K[H]$ is Koszul if $e(H)=2^{n-1}-2^{n-3}$.
\end{Proposition}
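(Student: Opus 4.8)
The plan is to make the minimal standard basis of $I_H$ completely explicit, and then to reduce the statement to an equality of monomial ideals that is detected by a multiplicity computation. Write $S=K[x_1,\dots,x_n]$ and let $<$ be the degree reverse lexicographic order induced by $x_n>\dots>x_1$. Since $H$ is a special almost complete intersection, $I_H$ is minimally generated by binomials $f_i=x_i^{c_i}-m_i$ with $m_i$ not a pure power and not divisible by $x_i$; by Lemma~\ref{lemma:quad-semi}(b) we have $c_i=2$ for $i\geq 2$, and since $H$ is quadratic, Lemma~\ref{lemma:quad-basis} shows that $\{f_1,\dots,f_n\}$ is a minimal standard basis and that $I_H^*$ is an almost CI ideal, so $\deg f_i^*=2$ for all $i$ and $\mu(I_H^*)=n$. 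Because $a_1=e(H)$ is the least generator, a defining relation $c_1a_1=\sum_{j\geq 2}\lambda_j a_j$ forces $c_1>\sum_j\lambda_j$; hence $f_1^*$ equals, up to sign, the monomial $m_1$, and being of degree $2$, not a pure power and not involving $x_1$, it has the form $f_1^*=x_kx_\ell$ with $2\leq k<\ell\leq n$. For $i\geq 2$ either $f_i^*=x_i^2$ or $f_i^*=x_i^2-x_px_q$, where in the latter case $2a_i=a_p+a_q$ with $a_p\neq a_q$ forces $p<i<q$; in both cases $\ini_<(f_i^*)=x_i^2$. Thus $\ini_<(I_H^*)\supseteq L_0:=(x_2^2,\dots,x_n^2,\,x_kx_\ell)$.

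Next I would analyze $L_0$. One has $S/L_0\cong K[x_1]\otimes_K B$ with $B:=K[x_2,\dots,x_n]/(x_2^2,\dots,x_n^2,x_kx_\ell)$, which is free over $K[x_1]$ of rank $\dim_K B=2^{n-1}-2^{n-3}=:d$; so $\dim S/L_0=1$ and $e(S/L_0)=d$. Since $\dim S/I_H^*=\dim\gr_\mm K[H]=1$ and $L_0\subseteq\ini_<(I_H^*)$, this already gives the unconditional bound $e(H)=e(S/\ini_<(I_H^*))\leq e(S/L_0)=d$. I would then show that equality holds \iff $\ini_<(I_H^*)=L_0$: the direction $\Leftarrow$ is immediate, and for $\Rightarrow$ note that if $\ini_<(I_H^*)\supsetneq L_0$ and $w$ is a minimal monomial generator of $\ini_<(I_H^*)$ not in $L_0$, then $w=x_1^a u$ with $u$ squarefree in $x_2,\dots,x_n$ and not divisible by $x_kx_\ell$; here $u\neq 1$ (otherwise $S/\ini_<(I_H^*)$ would be Artinian, contradicting $\dim=1$), so $\bar u$ is a nonzero element of $B$ of positive degree, and $S/(L_0+(w))\cong (K[x_1]\otimes B)/(x_1^aK[x_1]\otimes \bar u B)$ loses a free $K[x_1]$-summand of positive rank, whence $e(H)\leq e(S/(L_0+(w)))=d-\dim_K(\bar u B)<d$. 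This multiplicity dichotomy is precisely the device that lets us bypass the Cohen-Macaulayness hypothesis used in Theorem~\ref{thm:forbidden}.

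Finally I would close the loop. If $\ini_<(I_H^*)=L_0$, then $\{f_1^*,\dots,f_n^*\}$ is a quadratic Gr\"obner basis, since the set of its leading terms is exactly $\{x_2^2,\dots,x_n^2,x_kx_\ell\}$, which generates $L_0=\ini_<(I_H^*)$. Conversely, if $I_H^*$ has a quadratic Gr\"obner basis with respect to $<$, then $\ini_<(I_H^*)$ is generated in degree $2$ and $\dim_K(\ini_<(I_H^*))_2=\dim_K(I_H^*)_2=n$, the latter because $I_H^*$ is minimally generated by $n$ quadrics; but $(\ini_<(I_H^*))_2$ already contains the $n$ linearly independent monomials $x_2^2,\dots,x_n^2,x_kx_\ell$, so it is their span and therefore $\ini_<(I_H^*)=L_0$. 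Combining this with the dichotomy of the previous paragraph yields: $I_H^*$ has a quadratic Gr\"obner basis with respect to the degree reverse lexicographic order \iff $e(H)=2^{n-1}-2^{n-3}$; and in that case $\gr_\mm K[H]=S/I_H^*$ is $G$-quadratic, hence Koszul.

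The step I expect to be the actual work is the explicit identification in the first paragraph — showing $f_1^*$ is a quadratic monomial $x_kx_\ell$ supported on $\{x_2,\dots,x_n\}$, and pinning down the positions $p<i<q$ of the other initial forms — since this is what makes $L_0$ have exactly the right multiplicity $d$; everything downstream is monomial-and-multiplicity bookkeeping, with the multiplicity dichotomy replacing the Cohen-Macaulay assumption of Theorem~\ref{thm:forbidden}.
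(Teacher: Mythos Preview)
Your proof is correct and follows essentially the same route as the paper: identify the leading terms $\ini_<(f_i^*)=x_i^2$ for $i\geq 2$ and $\ini_<(f_1^*)=x_kx_\ell$, set $L_0=(\ini_<(f_1^*),\dots,\ini_<(f_n^*))$, and show that $e(H)=2^{n-1}-2^{n-3}$ if and only if $\ini_<(I_H^*)=L_0$. You are more explicit than the paper in two places---the multiplicity dichotomy (that $L_0\subsetneq\ini_<(I_H^*)$ forces a strict drop in multiplicity) and the dimension count establishing that any quadratic Gr\"obner basis forces $\ini_<(I_H^*)=L_0$---but the underlying argument is the same.
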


\begin{proof}
With notation as above, if $H$ is quadratic, by Lemma \ref{lemma:quad-semi} we get that $ c_1>2$ and $c_i=2$ for all $i>1$.
By Lemma \ref{lemma:quad-basis} we obtain that $I_H^*=(f_1^*, \dots, f_n^*)$ and that is a minimal generating set.
Clearly,  we have
$\ini_<(f_i^*)=x_i^2$ for $1< i \leq n$ and $\ini_<(f_1^*)=x_j x_k$ for some $2\leq j< k<n$.
We note that $x_1$ does not occur in any of the $\ini_<(f_i^*)$, hence
\begin{eqnarray*}
e(H)=e(S/I^*_H)= e(S/\ini_<(I^*_H)) &\leq& \ell(K[x_2, \dots, x_n]/(\ini_<(f_1^*), \dots, \ini_<(f_n^*))) \\
                                 &=& \ell(K[x_2, \dots, x_n]/(x_2^2, \dots, x_n^2, x_j x_k)) \\
																&=& 2^{n-1}-2^{n-3},
\end{eqnarray*}
after a computation similar to the one in the proof of Theorem \ref{lemma:quad-basis}(b).
We conclude that $e(H)=2^{n-1}-2^{n-3}$ if and only if $f_1^*, \dots, f_n^*$ form a Gr\"obner basis for $I_H^*$.
\end{proof}

\begin{Example}
\label{eq:saci}
{\em
A quadratic special almost complete intersection of embedding dimension $n$ need not to be Koszul if $e(H)< 2^{n-1}-2^{n-3}$.

\noindent Indeed, let $H=\langle 11,13,14,15,19 \rangle$. Then
$$
I_H=(x_1^3-x_3x_5, x_2^2-x_1x_4, x_3^2-x_2 x_4, x_4^2-x_1 x_5, x_5^2-x_1x_2x_3).
$$
As noticed in Remark \ref{rem:aci-nonkoszul}, $H$ is quadratic, but it is not a Koszul semigroup. }
\end{Example}

As an extension of  Corollary \ref{cor:ci-aci} we have the following.
\begin{Corollary}
\label{cor:more-saci}
Let $L$ be a special almost complete intersection numerical semigroup that is quadratic, and $\ell \in L\setminus G(L)$ an odd integer that is not a multiple of $e(L)$.
Then $H=\langle 2L, \ell \rangle$ is a special almost complete intersection semigroup, too.
\end{Corollary}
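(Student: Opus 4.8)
The plan is to produce an explicit $n$-element binomial generating set for $I_H$ out of the special-almost-CI presentation of $I_L$ together with the gluing relation, and to verify it has the shape demanded by the definition. Write $\embdim(L)=n-1$ and $G(L)=\{a_1<\dots<a_{n-1}\}$, so that $a_1=e(L)$; since $L$ is a special almost complete intersection, $I_L=(f_1,\dots,f_{n-1})$ with $f_i=x_i^{c_i}-m_i$, where $c_i$ is minimal with $c_ia_i$ a sum of the other generators, $x_i\nmid m_i$, and $m_i$ is not a pure power. Because $\ell\notin G(L)$ we have $\ord_L(\ell)\ge 2$, so $c=2\le\ord_L(\ell)$ and Corollary~\ref{cor:glue-2} applies: $H$ is quadratic, $\embdim(H)=n$, $G(H)=\{2a_1,\dots,2a_{n-1},\ell\}$, and by \eqref{eq:glue} $I_H=(I_LS,f)=(f_1,\dots,f_{n-1},f)$ where $f=x_n^2-m$ is a gluing relation. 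Thus generation is automatic, and everything comes down to identifying, for each generator of $H$, the least pure power of the corresponding variable occurring in $I_H$ and arranging the subtracted monomial to be a non-pure-power.

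First I would handle the doubled generators $2a_i$, $i\le n-1$, and show that the least exponent $d$ with $d(2a_i)$ a sum of the other generators of $H$ is again $c_i$. The inequality $d\le c_i$ is clear: doubling the $L$-relation $c_ia_i=\sum_j r_{ij}a_j$ gives $c_i(2a_i)=\sum_j r_{ij}(2a_j)$, a relation among the generators of $H$ not using $\ell$, with associated monomial exactly $m_i$. For the reverse, suppose $d(2a_i)=\sum_{j\ne i,\,j<n}s_j(2a_j)+s_n\ell$ with $d<c_i$; comparing parities (the left side and $\sum s_j(2a_j)$ are even while $\ell$ is odd) forces $s_n=2t$, and cancelling a $2$ and substituting some representation $\ell=\sum_k\lambda_ka_k$ yields $(d-t\lambda_i)a_i=\sum_{j\ne i}(s_j+t\lambda_j)a_j$ in $L$. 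If $d>t\lambda_i$, minimality of $c_i$ in $L$ gives $d\ge c_i$, a contradiction. If $d\le t\lambda_i$, nonnegativity forces both sides to vanish, hence $t\ge1$ and $\ell=\lambda_ia_i$ with $\lambda_i\ge2$ (as $\ell\notin G(L)$); for $i=1$ this says $\ell$ is a multiple of $e(L)$, excluded by hypothesis, and for $i>1$ we have $c_i=2$ by Lemma~\ref{lemma:quad-semi}(b), so $d=t\lambda_i\ge2=c_i$, again a contradiction. So the binomial attached to $2a_i$ is $x_i^{c_i}-m_i=f_i$, with $m_i$ not a pure power.

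Next I would treat the generator $\ell$. Any sum of the $2a_j$ is even while $\ell$ is odd, so the least exponent $c_n$ with $c_n\ell$ a sum of the other generators of $H$ is even, hence equal to $2$, witnessed by $f$. It remains to choose a non-pure-power monomial $m$ whose value equals $\ell$. If this were impossible, every expression of $\ell$ as a sum of generators of $L$ would be $\lambda_ja_j$ for a single $j$ with $\lambda_j\ge2$; the case $j=1$ is excluded since $\ell$ is not a multiple of $e(L)$, and for $j>1$ we use that $L$ is a special almost complete intersection, so $2a_j$ equals the value of the non-pure-power monomial $m_j$, whence $\ell=(\lambda_j-2)a_j+(\text{value of }m_j)$ is a representation of $\ell$ involving at least two distinct generators — a contradiction. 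Choosing such an $m$, the gluing relation $f=x_n^2-m$ is the binomial attached to $\ell$. Since the $f_i$ do not involve $x_n$ whereas $f$ does, $f_1,\dots,f_{n-1},f$ are $n$ distinct binomials of the required form generating $I_H$; and $I_H$ is an almost complete intersection by Corollary~\ref{cor:ci-aci}(b) together with Lemma~\ref{lemma:quad-basis}(b) (the hypotheses on $L$ give $I_L^*$ almost CI, hence $I_H^*$, hence $I_H$, is almost CI), so these $n$ binomials form a minimal system and $H$ is a special almost complete intersection.

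The main obstacle is the identity ``least pure power of $x_i$ in $I_H$ equals $c_i$'' for the doubled generators: the parity argument halves the exponent of $\ell$, but the leftover multiple $t\lambda_ia_i$ of $a_i$ produces a genuinely degenerate case exactly when $\ell$ is a multiple of some $a_i$, and clearing it is where both hypotheses are used — $\ell$ not a multiple of $e(L)=a_1$ for $i=1$, and $c_i=2$ for $i>1$ (Lemma~\ref{lemma:quad-semi}). The same degeneracy, seen at the level of $\ell$ itself, is what forces the non-pure-power choice of $m$ in the gluing relation.
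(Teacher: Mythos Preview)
Your proof is correct and follows the same strategy as the paper's: write $I_H=(f_1,\dots,f_{n-1},f)$ using the special-ACI presentation of $I_L$ and a gluing relation, then arrange for the subtracted monomial in $f$ to be a non-pure-power (both of you do this by rewriting $x_i^{\lambda_i}$ via the quadratic relation $x_i^2-m_i$). Where you go beyond the paper is in explicitly verifying, via the parity/substitution argument, that the minimal exponent for each doubled generator $2a_i$ in $H$ coincides with $c_i$ in $L$; the paper compresses this into its final sentence ``Since $L$ was special almost CI, we conclude that the same is true about $H$,'' so your account is more complete on exactly the point where both hypotheses ($L$ quadratic and $e(L)\nmid\ell$) actually enter for the old generators.
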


\begin{proof}
By Corollary \ref{cor:ci-aci}, together with Lemma \ref{lemma:quad-basis} we get that $H$ is quadratic and
that $I_H$ is an almost complete intersection ideal.
Let $n=\embdim(H)$. If we denote $f$ the gluing relation, using the convention that $x_n$ corresponds to the new  generator $\ell$, we have that
$I_H= (I_L, f)$, and  the gluing relation $f$ from Eq. \eqref{eq:glue-poly} is of the form $x_n^2-m$, where $m$ is a monomial in the variables $x_1, \dots, x_{n-1}$.

We claim that we may choose $f$ such that $m$ is not a pure power.
Indeed, if $m=x_1^c$ with $c>1$, then $\ell$ is a multiple of $e(L)$, which contradicts our assumption.
Assume $m=x_i^c$ with $1< i \leq n-1$ and $c>1$.
By our assumption on $L$ and Lemma  \ref{lemma:quad-semi}, there exists an equation $f_i=x_i^2-u$, where $u$ is a monomial which is not a pure power.
Then we can replace $f$ by $x_n^2-x_i^{c-2}u$.
Since $L$ was special almost CI, we conclude that the same is true about $H$.
\end{proof}

\begin{Remark}
{\em
As a consequence of Corollary \ref{cor:more-saci}, starting from any quadratic special almost complete intersection semigroup, by gluing
 we can construct semigroups with these properties of any larger embedding dimension.
}
\end{Remark}

\subsection{Symmetric and pseudo-symmetric Koszul 4-semigroups}
\label{ssec:symmetric}
The {\em pseudo-Frobenius numbers} of the numerical semigroup $H$ are the elements of the finite set
$$
PF(H)= \{ n\in \ZZ \setminus H: n+h\in H, \text{ for all } h\in H\setminus\{0\} \}.
$$
The {\em Frobenius number} of $H$, usually defined as $g(H)= \max \NN\setminus H$, also satisfies
 $g(H) = \max PF(H)$.

The semigroup  $H$ is called {\em symmetric} if for any integer  $n$  exactly one of $n$ and $g(H)-n$ is in $H$.
Algebraically, by a celebrated theorem of Kunz (\cite{Kunz}), $H$ is symmetric if and only if $K[H]$ is a Gorenstein ring.
One can check that $H$ is symmetric if and only if $PF(H)=\{ g(H) \}$.

The semigroup $H$ is called {\em pseudo-symmetric} if $PF(H)= \{ g(H)/2, g(H) \}$.

In the sequel we describe the $4$-generated symmetric or pseudo-symmetric numerical semigroups that are also Koszul.

\medskip
\subsubsection{The symmetric case}
  Let $H$ be a symmetric numerical semigroup such that $\embdim(H)=4$.
	
If $H$ is CI and Koszul, by Theorem \ref{thm:quad-ci} we have that $H$ is obtained from $\NN$ by a sequence of quadratic simple gluings. Using also Proposition \ref{prop:3-semi-quad} we have that
$H=\langle 2 \langle 4, 2c, 2a+c\rangle, \ell \rangle= \langle 8, 4c, 4a+ 2c, \ell \rangle$,
where $a, c, \ell$ are positive integers, $c,\ell > 1$ are odd, and $\ell \in \langle 4,2c, 2a+c \rangle \setminus \{2a+c\}$.

If $H$ is not CI, we employ the following characterization found by Bresinsky \cite{Bres-gore}, as given by Barucci et al. in \cite[Theorem 3]{BFS}.

\begin{Theorem} (Bresinsky, \cite[Theorem 5, Theorem 3]{Bres-gore})
\label{thm:Bresinsky} The numerical semigroup $H=\langle a_1, a_2, a_3, a_4 \rangle$ is $4$-generated symmetric, not a complete intersection, if and only if there are integers
$c_i$, $1 \leq i \leq 4$, $\alpha_{ij}, ij\in \{ 21, 31, 32, 42, 13, 43, 14, 24 \}$, such that $0< \alpha_{ij}< c_i$, for all $i$,$j$,
\begin{eqnarray*}
c_1 = \alpha_{21}+ \alpha_{31},\ c_2= \alpha_{32}+ \alpha_{42},\ c_3=\alpha_{13}+\alpha_{43},\  c_4=\alpha_{14}+ \alpha_{24}, \\
a_1 = c_2 c_3 \alpha_{14}+ \alpha_{32}\alpha_{13}\alpha_{24}, \quad a_2= c_3 c_4 \alpha_{21}+ \alpha_{31}\alpha_{43}\alpha_{24}, \quad \\
a_3 = c_1 c_4 \alpha_{32}+ \alpha_{14}\alpha_{42}\alpha_{31}, \quad a_4= c_1 c_2 \alpha_{43}+ \alpha_{42}\alpha_{21}\alpha_{13}. \quad
\end{eqnarray*}
Then $K[H]\cong S/(f_1, f_2, f_3, f_4, f_5)$, where
\begin{eqnarray*}
f_1 = x_1^{c_1}- x_3^{\alpha_{13}}x_4^{\alpha_{14}},\quad f_2= x_2^{c_2}-x_1^{\alpha_{21}}x_4^{\alpha_{24}}, \quad f_3=x_3^{c_3}- x_1^{\alpha_{31}}x_2^{\alpha_{32}},\\
f_4 = x_4^{c_4}-x_2^{\alpha_{42}}x_3^{\alpha_{43}}, \quad f_5= x_3^{\alpha_{43}}x_1^{\alpha_{21}}-x_2^{\alpha_{32}}x_4^{\alpha_{14}}.\quad\quad\quad
\end{eqnarray*}
\end{Theorem}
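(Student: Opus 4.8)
Since this is a classical structure theorem (due to Bresinsky), the plan is not to rederive it from scratch but to indicate the conceptual route one would take. The starting point is Kunz's theorem: $H$ is symmetric precisely when $K[H]=S/I_H$ is Gorenstein, where $S=K[x_1,\dots,x_4]$; and since $\dim K[H]=1$, the ideal $I_H$ has grade (and height) $3$. I would then invoke the Buchsbaum--Eisenbud structure theorem for grade-$3$ Gorenstein ideals: $I_H$ is generated by the order-$2m$ Pfaffians of an alternating $(2m+1)\times(2m+1)$ matrix $A$ over $S$, so in particular $\mu(I_H)=2m+1$ is \emph{odd}. Since $H$ is not a complete intersection, $\mu(I_H)\ge\height I_H+1=4$, and hence $\mu(I_H)\ge 5$.

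The first substantive step is to show that $\mu(I_H)=5$ exactly, and to pin down the shape of the generators. Here I would use that $I_H$ is a pure-difference binomial (lattice) ideal in only four variables: for each $i$ set $c_i$ to be the least positive integer with $c_ia_i\in\langle a_j:j\ne i\rangle$, giving critical binomials $f_i=x_i^{c_i}-m_i$ with $x_i\nmid m_i$. Bresinsky's original combinatorial analysis of the possible minimal binomial relations among four generators --- equivalently a case analysis on which variables can occur in each $m_i$ --- shows that beyond $f_1,\dots,f_4$ there is room for at most one further minimal generator; together with the parity forced by Buchsbaum--Eisenbud this yields exactly five, and forces the ``cyclic'' pattern $f_i=x_i^{c_i}-(\text{monomial in the two neighbours of }i)$ plus one mixed relation $f_5$.

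Next I would realize $f_1,\dots,f_5$ as the order-$4$ Pfaffians of an explicit $5\times 5$ alternating matrix whose nonzero entries are, up to sign, the monomials $x_k^{\alpha_{ij}}$. Requiring skew-symmetry and that the five Pfaffians be precisely the $f_i$ forces, by comparing the exponent of each variable $x_i$ across the defining syzygy $A\cdot(f_1,\dots,f_5)^{t}=0$, the additive relations $c_1=\alpha_{21}+\alpha_{31}$, $c_2=\alpha_{32}+\alpha_{42}$, $c_3=\alpha_{13}+\alpha_{43}$, $c_4=\alpha_{14}+\alpha_{24}$ and the strict inequalities $0<\alpha_{ij}<c_i$ (strictness because an exponent $0$, or a saturated one, would either contradict minimality of the $c_i$ or collapse $I_H$ to a complete intersection). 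Finally, imposing homogeneity of each $f_i$ for the grading $\deg x_j=a_j$ --- i.e.\ equating the two monomial exponents of $f_i$ --- produces a linear system in $a_1,\dots,a_4$ with the $c_i$ and $\alpha_{ij}$ as coefficients; solving it and normalizing $\gcd(a_1,\dots,a_4)=1$ yields the displayed formulas for $a_1,\dots,a_4$. For the converse one checks directly that, given such data, the alternating matrix has the generic rank, so its Pfaffian ideal is grade-$3$ Gorenstein, that the five binomials are homogeneous for the assigned grading and generate a prime lattice ideal, and that the resulting $a_i$ generate a numerical semigroup with exactly this presentation.

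The main obstacle is the middle step: proving that the minimal number of generators is \emph{exactly} five (ruling out $\mu(I_H)\ge 7$) and that the five generators have the rigid binomial shape above --- the Buchsbaum--Eisenbud parity only supplies ``odd'', and the rest is genuine combinatorics on four generators. By comparison, turning the Pfaffian and skew-symmetry data into the explicit arithmetic identities for the $a_i$ is essentially bookkeeping with the $\ZZ$-grading.
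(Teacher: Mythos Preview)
The paper does not prove this theorem at all: it is quoted as a classical result of Bresinsky (1975), in the formulation given by Barucci--Fr\"oberg--\c{S}ahin, and is used purely as a black box to classify quadratic symmetric non-CI $4$-semigroups in Theorem~\ref{thm:4-quadr-gore}. So there is no ``paper's own proof'' to compare against.

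Your sketch is a sensible \emph{modern} outline --- Kunz plus Buchsbaum--Eisenbud gives that $\mu(I_H)$ is odd and $\ge 5$, and the Pfaffian bookkeeping for the exponent identities is fine --- but note two things. First, Bresinsky's original argument (1975) predates the Buchsbaum--Eisenbud structure theorem (1977) and is entirely combinatorial on the lattice of relations among four generators; the Pfaffian viewpoint is a later reinterpretation. Second, the step you yourself flag as the ``main obstacle'' is genuinely missing from your sketch: you invoke ``Bresinsky's original combinatorial analysis'' to bound $\mu(I_H)\le 5$ and to force the cyclic shape of the $m_i$, which is circular if the goal is to \emph{prove} Bresinsky's theorem. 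Buchsbaum--Eisenbud by itself does not rule out $\mu(I_H)=7,9,\dots$; one really needs the four-variable binomial/lattice combinatorics (or, alternatively, an a priori bound on $\mu$ for toric ideals of embedding dimension $4$) to close that gap. Since the paper treats the theorem as input rather than output, none of this affects the paper's logic.
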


For quadratic, symmetric and not CI semigroups we obtain the following classification result.

\begin{Theorem}
\label{thm:4-quadr-gore}
Let $H$ be a $4$-generated  semigroup that is symmetric and not a complete intersection. The following are equivalent:
\begin{enumerate}
\item[{\em (a)}] $H$ is Koszul;
\item[{\em (b)}] $H$ is quadratic;
\item[{\em (c)}] $e(H)=5$;
\item[{\em (d)}] $H= \langle 5, 4a+b, 2a+3b, 3a+2b \rangle$ for some  positive integers $a, b$ such that $a-b$ is not divisible by $5$.
\end{enumerate}

Moreover, the integers $a$ and $b$ in {\em (d)} are uniquely determined by $H$.
\end{Theorem}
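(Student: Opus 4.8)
The plan is to organize the argument around the implications (a)$\Rightarrow$(b)$\Rightarrow$(c)$\Rightarrow$(d)$\Rightarrow$(a), following the pattern of the $3$-generated classification (Proposition~\ref{prop:3-semi-quad}) but now exploiting Bresinsky's explicit presentation (Theorem~\ref{thm:Bresinsky}). The implication (a)$\Rightarrow$(b) is a general fact recalled in the introduction. For (b)$\Rightarrow$(c): if $H$ is quadratic and $\embdim(H)=4$, Theorem~\ref{thm:bounds}(a) gives $4\le e(H)\le 8$, and since $H$ is \emph{not} a complete intersection, Theorem~\ref{thm:bounds}(c) rules out $e(H)=8$; one must then exclude $e(H)\in\{4,6,7\}$. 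The value $e(H)=4$ forces, by Proposition~\ref{prop:linear}, that $I_H^*$ has a $2$-linear resolution and hence (since $e(H)=n$) that $\gr_\mm K[H]$ is a Cohen--Macaulay ring of minimal multiplicity; but a Cohen--Macaulay tangent cone of a $4$-generated symmetric non-CI semigroup would force $K[H]$ itself to have small Cohen--Macaulay type, contradicting the Gorenstein (type $1$) but non-CI hypothesis via Bresinsky's $\mu(I_H)=5=n+1$. The values $e(H)=6,7$ should be excluded by a counting argument: by Lemma~\ref{lemma:quad-semi}(b), $2a_i\in\langle a_1,\dots,\widehat{a_i},\dots,a_n\rangle$ for $i=2,3,4$, so $I_H^*$ contains quadrics $x_i^2-m_i$ with $\ini_<(x_i^2-m_i)=x_i^2$ for a suitable reverse-lex order; combined with the fact that $\mu(I_H^*)\le\mu(I_H)=5$ and $I_H^*$ is quadratic with $\height I_H^*=3$, a short analysis of which $5$ quadrics can occur—modeled on the proof of Theorem~\ref{thm:forbidden}—pins down $e(H)=\ell(S/\ini_<(I_H^*))$ and shows it cannot equal $6$ or $7$.

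For (c)$\Rightarrow$(d): assume $e(H)=5$, so $a_1=5$. In Bresinsky's notation the $c_i$ and $\alpha_{ij}$ are constrained by $0<\alpha_{ij}<c_i$ and the four sum relations $c_1=\alpha_{21}+\alpha_{31}$ etc. By Lemma~\ref{lemma:quad-semi}(b), $c_2=c_3=c_4=2$, hence $\alpha_{ij}=1$ for every $ij$ with $i\in\{2,3,4\}$, i.e. $\alpha_{21}=\alpha_{31}=\alpha_{32}=\alpha_{42}=\alpha_{43}=\alpha_{24}=1$, and therefore $c_1=\alpha_{21}+\alpha_{31}=2$, forcing $\alpha_{13}=\alpha_{14}=1$ as well. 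Substituting $c_i=2$ and all $\alpha_{ij}=1$ into Bresinsky's formulas for $a_1,\dots,a_4$ gives $a_1=4\alpha_{14}+\alpha_{32}\alpha_{13}\alpha_{24}=5$ automatically, and $a_2=4\alpha_{21}+\alpha_{31}\alpha_{43}\alpha_{24}$, and so on—but this would make all $a_i$ equal, which is absurd. The resolution is that once $a_1=5$ is small, \emph{not all} $c_i$ can be forced to $2$ by Lemma~\ref{lemma:quad-semi}(b): that lemma forces $c_i=2$ only for those $i$ with $a_i$ \emph{not} the multiplicity, and the indexing in Bresinsky's theorem need not match the size order. So the correct route is: relabel Bresinsky's generators in increasing order, identify which index plays the role of the multiplicity $5$, set the three other $c_i$ equal to $2$, solve the resulting linear relations among the $\alpha_{ij}$ and the $a_i$, and read off that $H=\langle 5,\,4a+b,\,2a+3b,\,3a+2b\rangle$ with parameters $a,b$ coming from the two free $\alpha$'s (compare the solution for $a_2,a_3$ in the $3$-generated case). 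The divisibility condition $5\nmid a-b$ is exactly the condition that $4a+b,\,2a+3b,\,3a+2b$ are pairwise incongruent mod $5$, i.e. that $\embdim(H)=4$; and since $(4a+b)+(2a+3b)+(3a+2b)=3(3a+b)$ while $4a+b+3a+2b = 7a+3b$, one checks the semigroup is symmetric non-CI by verifying the $\alpha_{ij}$ conditions or directly exhibiting $PF(H)=\{g(H)\}$.

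For (d)$\Rightarrow$(a): given $H=\langle 5,4a+b,2a+3b,3a+2b\rangle$, compute $I_H$ explicitly from Bresinsky's $f_1,\dots,f_5$ with $c_1=2$, the other $c_i=2$, and the $\alpha_{ij}\in\{a,b,1\}$ as determined above; reducing modulo $x_1$ (the variable attached to $5=e(H)$) yields a monomial ideal $\overline{I}_H$ whose minimal generators form a standard basis and lift to the chosen generators of $I_H$ with the same initial degree. By the lifting criterion \cite[Lemma~1.2]{HeS} (as used in the proof of Proposition~\ref{prop:compound}), the $f_i$ are a standard basis of $I_H$, so $I_H^*$ is generated in degree $2$; since $e(H)=5=\embdim(H)$, Theorem~\ref{thm:bounds}(b) (or Proposition~\ref{prop:g-quadratic}) gives that $\gr_\mm K[H]$ is $G$-quadratic, hence Koszul. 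Finally the uniqueness of $a,b$: from the generators, $b=(4a+b)-(3a+2b)+b$ is recovered as an explicit integer combination, e.g. $a=\tfrac{1}{5}\bigl(2(4a+b)-(3a+2b)\bigr)$ and $b=\tfrac{1}{5}\bigl(3(3a+2b)-2(4a+b)-\ \cdots\bigr)$; spelled out, the pair $(a,b)$ is determined by the two smallest non-multiplicity generators, so $H$ determines it. \textbf{The main obstacle} I anticipate is the bookkeeping in (c)$\Rightarrow$(d): matching Bresinsky's fixed index labels against the increasing order of the $a_i$, correctly applying Lemma~\ref{lemma:quad-semi}(b) only to the non-multiplicity indices, and then solving the mildly overdetermined linear system in the $\alpha_{ij}$ without losing or introducing spurious parameters—this is where a careless substitution (as in my first attempt above) collapses all generators to be equal.
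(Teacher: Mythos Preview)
There are two genuine gaps.

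First, in (d)$\Rightarrow$(a) you write ``since $e(H)=5=\embdim(H)$'' and invoke Theorem~\ref{thm:bounds}(b)/Proposition~\ref{prop:g-quadratic}. But $H$ is $4$-generated, so $\embdim(H)=4\neq 5=e(H)$; minimal multiplicity is simply false here, and neither result applies. The paper closes this step differently: with $a=\alpha_{21}$, $b=\alpha_{31}$, $c_2=c_3=c_4=2$ and the remaining $\alpha_{ij}=1$, Bresinsky's five generators become
\[
f_1=x_1^{a+b}-x_3x_4,\quad f_2=x_2^2-x_1^{a}x_4,\quad f_3=x_3^2-x_1^{b}x_2,\quad f_4=x_4^2-x_2x_3,\quad f_5=x_1^{a}x_3-x_2x_4,
\]
and after reducing modulo $x_1$ and lifting via \cite[Lemma~1.2]{HeS} one obtains $I_H^*$ \emph{explicitly} (three cases, according to whether $a=1$, $b=1$, or neither). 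One then checks by hand that each of these ideals has a quadratic Gr\"obner basis for the reverse-lex order with $x_4>x_3>x_2>x_1$. There is no shortcut via minimal multiplicity.

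Second, your route for (b)$\Rightarrow$(c) is both circuitous and incomplete. The paper does not bound $e(H)\in\{4,\dots,8\}$ and then exclude $4,6,7$ case by case; your exclusion of $e(H)=4$ via Cohen--Macaulay type is not a valid argument, and the sketch for $6,7$ is just a hope. The clean approach is the one the paper uses: up to the cyclic relabelling built into Bresinsky's theorem, assume $a_1=e(H)$; Lemma~\ref{lemma:quad-semi}(b) forces $c_2=c_3=c_4=2$; then $0<\alpha_{ij}<c_i$ forces $\alpha_{32}=\alpha_{42}=\alpha_{13}=\alpha_{43}=\alpha_{14}=\alpha_{24}=1$, and Bresinsky's formula $a_1=c_2c_3\alpha_{14}+\alpha_{32}\alpha_{13}\alpha_{24}$ gives $a_1=5$ \emph{directly}. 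This also resolves your self-diagnosed obstacle in (c)$\Rightarrow$(d): the point is precisely that $c_1$ is \emph{not} forced to be $2$; the free parameters are $a=\alpha_{21}$ and $b=\alpha_{31}$ (so $c_1=a+b$), and substituting these together with the forced values $c_2=c_3=c_4=2$ and the six $\alpha_{ij}=1$ into Bresinsky's formulas yields $a_2=4a+b$, $a_3=2a+3b$, $a_4=3a+2b$. Your ``first attempt'' collapsed because you set $c_1=2$ as well.

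For uniqueness the paper observes that $4a+b,\,3a+2b,\,2a+3b$ is an arithmetic progression with step $b-a$ and argues that two parametrizations with steps of opposite sign would force $5\mid a-b$, while equal signs force $(a,b)=(a',b')$.
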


\begin{proof}
The implication
  (a) \implies (b)   is clear.
For   (b) \implies (c) assume  $H= \langle a_1, a_2, a_3, a_4 \rangle$ is quadratic.
Using Lemma \ref{lemma:quad-semi} and Bresinsky's Theorem \ref{thm:Bresinsky}, without loss of generality we may  assume that $c_1>2$ (i.e. $e(H)= a_1$) and $c_2=c_3=c_4=2$.
The conditions $0< \alpha_{ij}<c_i$ give $\alpha_{ij}=1$ for $ij \in \{    32, 42, 13, 43, 14, 24 \}$, hence $a_1 = c_2 c_3 \alpha_{14}+ \alpha_{32}\alpha_{13}\alpha_{24}=5$.

For  (c) \implies (d), taking into account the restrictions in Theorem \ref{thm:Bresinsky} we note that the equation
$$
5 = c_2 c_3 \alpha_{14}+ \alpha_{32}\alpha_{13}\alpha_{24}
$$
 holds only if $\alpha_{ij}=1$ for $ij \in \{14, 32, 13, 24 \} $ and if $c_2=c_3=2$. The latter set of equalities yield   $\alpha_{42}=\alpha_{43}=1$. For brevity we denote $a=\alpha_{21}$ and $b=\alpha_{31}$.
We plug  these values into  Bresinsky's Theorem and we get $a_2= 4a+b$, $a_3=2a+3b$, $a_4=3a+2b$.

We show that this parametrization is one-to-one.
Let $a,b,a',b'>0$ and $5\nmid a-b$, $5\nmid a'-b'$ such that
$$
\langle 5, 4a+b, 2a+3b, 3a+2b \rangle = \langle 5, 4a'+b, 2a'+3b', 3a'+2b' \rangle.
$$
Note that $4a+b, 3a+2b, 2a+3b$  and $4a'+b', 3a'+2b', 2a'+3b'$ are arithmetic sequences with common difference $b-a$, and $b'-a'$ respectively.

If $(b-a)(b'-a') <0 $, then  $b-a= a'-b'$ and $5a+(b-a)=4a+b= 2a'+3b'= 5b'+ 2(a'-b')$, hence $5| b-a$, which is false.

If $(b-a)(b'-a') >0$, then $b-a= b'-a'$ and $5a+ (b-a)=4a+b = 4a'+b'= 5a'+ (b'-a')$, hence $(a,b)=(a',b')$ and we are done.

For     (d) \implies (a) we first note using Bresinsky's Theorem  that $H$ is indeed symmetric and all
the $c_i$'s and the $\alpha_{ij}$'s can be read from the proof of the implication (c)\implies (d).

 Consequently $I_H=(f_1, f_2, f_3, f_4, f_5)$, where
\begin{eqnarray*}
f_1=x_1^{a+b}-x_3 x_4, \quad f_2=x_2^2-x_1^a x_4, \quad f_3=x_3^2-x_1^b x_2, \\ f_4=x_4^2-x_2x_3, \quad f_5=x_3x_1^a-x_2 x_4.\quad\quad\quad\quad
\end{eqnarray*}
Modulo $x_1$ we get
\begin{equation}
\bar{I}_H=(x_3x_4, x_2^2, x_3^2, x_4^2-x_2x_3, x_2x_4),
\end{equation}
a monomial ideal whose generators (at the same time  a standard basis) may be lifted to the $f_i$'s in $I_H$ and keeping the same initial degree.
We apply the criterion in  \cite[Lemma 1.2]{HeS} to conclude the $x_1$ is regular on $S/I^*_H$ and $f_1, \dots, f_5$ form a standard basis for $I_H$.
Hence
\begin{equation}
\label{eq:smart}
I_H^*= \begin{cases} (x_3x_4, x_2^2, x_3^2, x_4^2-x_2x_3, x_2x_4), \text{ if } a\neq 1 \text{ and } b\neq 1,  \\
  (x_3x_4, x_2^2-x_1 x_4, x_3^2, x_4^2-x_2x_3, x_3x_1-x_2x_4), \text{ if } a= 1, \text{ and } b\neq 1,  \\
  (x_3x_4, x_2^2, x_3^2-x_1x_2, x_4^2-x_2x_3, x_2x_4), \text{ if } a\neq 1 \text{ and } b = 1.
\end{cases}
\end{equation}
It is easy to check that in each of these situations  $I_H^*$ has a quadratic Gr\"obner basis with respect to 
the degree reverse lexicographic order induced by $x_4>x_3>x_2>x_1$, in particular $S/I_H^*$ is a Koszul ring.
\end{proof}

We verified with Singular (\cite{Sing}) that in any of the three   cases  \eqref{eq:smart} the ring  $S/I^*_H$ is Gorenstein.
Together with Theorem \ref{thm:bounds}(c) we obtain the following.

\begin{Corollary}
\label{cor:strict-gore}
Let $H$ be a $4$-generated symmetric and quadratic numerical semigroup. Then $\gr_\mm K[H]$ is Gorenstein.
\end{Corollary}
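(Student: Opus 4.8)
The plan is to reduce the statement to the three explicit shapes of $I_H^*$ already identified, and observe that each one is Gorenstein, since the property passes to and from the tangent cone in the two relevant cases.

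First I would split according to whether $K[H]$ is a complete intersection. If it is, then by Theorem \ref{thm:bounds}(c) quadraticity forces $e(H)=2^{n-1}=2^3=8$, hence $I_H^*$ is itself a complete intersection (generated by a regular sequence of quadrics, by Proposition \ref{prop:ci-local}); a complete intersection ring is Gorenstein, so $\gr_\mm K[H]=S/I_H^*$ is Gorenstein in this case. If $K[H]$ is not a complete intersection, then $H$ falls under Theorem \ref{thm:4-quadr-gore}: being quadratic it has $e(H)=5$ and $H=\langle 5, 4a+b, 2a+3b, 3a+2b\rangle$, and the proof of that theorem computes $I_H^*$ explicitly in the three cases of \eqref{eq:smart}. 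In each case one checks directly that $I_H^*$ is a Gorenstein ideal: the quotient $S/I_H^*$ is Cohen--Macaulay (indeed $x_1$ is regular on it, as shown there) of codimension $3$, and $\Soc$ of the Artinian reduction $\bar S/\bar I_H^*$ — where $\bar S=K[x_2,x_3,x_4]$ and $\bar I_H^*$ is the image of $I_H^*$ modulo $x_1$ — is one-dimensional. For instance, in the first case $\bar I_H^*=(x_3x_4,x_2^2,x_3^2,x_4^2-x_2x_3,x_2x_4)$, so $\bar S/\bar I_H^*$ has $K$-basis $1,x_2,x_3,x_4,x_2x_3$ and socle spanned by the single element $x_2x_3$; the other two cases are entirely analogous after the obvious monomial substitution.

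The one point that needs a word of care is the passage between $S/I_H^*$ and its Artinian reduction: since $x_1$ is a nonzerodivisor on $S/I_H^*$ and $\gr_\mm K[H]$ is Cohen--Macaulay of dimension one, $S/I_H^*$ is Gorenstein if and only if the one-dimensional-socle condition holds for $\bar S/\bar I_H^*$, which is a finite check in each of the three monomial-and-binomial presentations. Alternatively — and this is the route the remark before the statement suggests — one simply verifies this socle computation with Singular (\cite{Sing}), which was in fact done; the point of the present corollary is just to record that quadraticity of a $4$-generated symmetric $H$ already forces the strong conclusion that the tangent cone, not merely $K[H]$, is Gorenstein. Combining the two cases (complete intersection, giving $e(H)=8$; and non-complete-intersection, giving $e(H)=5$ with the three shapes of $I_H^*$) completes the proof.

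I expect the only mild obstacle to be bookkeeping: making sure the socle-dimension check is carried out for each of the three cases in \eqref{eq:smart}, and confirming that in the complete intersection branch one is genuinely entitled to conclude $e(H)=8$ (which is exactly Theorem \ref{thm:bounds}(c) together with $\embdim H=4$). Neither step involves anything deeper than the results already assembled in the excerpt.
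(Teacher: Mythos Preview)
Your argument is correct and follows exactly the paper's approach: split into the CI case (handled by Theorem~\ref{thm:bounds}(c)) and the non-CI case (handled by the three explicit ideals in~\eqref{eq:smart}), then verify Gorensteinness in each. The paper merely cites a Singular check for the three non-CI cases, whereas you sketch the socle computation by hand; this is a welcome elaboration but not a different route.
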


\subsubsection{The pseudosymmetric case}

Four-generated pseudo-symmetric semigroups were characterized by Komeda in  \cite{Komeda},
where these were studied under the name almost symmetric. In the formulation from \cite{BFS}, the following holds.

\begin{Theorem} (Komeda, \cite[Theorems 6.4, 6.5]{Komeda})
\label{thm:Komeda}
The semigroup $H= \langle a_1, a_2, a_3, a_4 \rangle$ is pseudo-symmetric if and only if
there are positive integers $c_i>1 , 1\leq i\leq 4$, and $0<\alpha_{21}< c_1-1$, such that
\begin{gather*}
a_1= c_2 c_3 (c_4-1)+1, \quad a_2=\alpha_{21} c_3 c_4+ (c_1-\alpha_{21}-1)(c_3-1)+ c_3,\\
a_3= c_1c_4+ (c_1-\alpha_{21}-1)(c_2- 1)(c_4-1)-c_4+1, \\a_4=c_1 c_2(c_3-1)+ \alpha_{21}(c_2-1)+ c_2.
\end{gather*}
and $\gcd(a_1, a_2, a_3, a_4)=1$.

Then $K[H] \cong S/(f_1, f_2, f_3, f_4, f_5)$, where
\begin{gather*}
f_1= x_1^{c_1}- x_3x_4^{c_4-1}, \quad f_2=x_2^{c_2}- x_1^{\alpha_{21}}x_4, \quad f_3=x_3^{c_3}-x_1^{c_1-\alpha_{21}-1}x_2,\\
f_4=x_4^{c_4}- x_1 x_2^{c_2-1}x_3^{c_3-1}, \quad f_5=x_3^{c_3-1}x_1^{\alpha_{21}+1}-x_2 x_4^{c_4-1}.
\end{gather*}
\end{Theorem}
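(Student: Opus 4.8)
The classification above is due to Komeda \cite{Komeda} (see also \cite{BFS}); here I outline a proof strategy, splitting into the two implications. Suppose first that $H=\langle a_1,a_2,a_3,a_4\rangle$ is pseudo-symmetric, so that $\mathrm{PF}(H)=\{g/2,g\}$ with $g=g(H)$. One records two structural facts: such an $H$ is never a complete intersection (a $4$-generated complete intersection is Gorenstein, hence symmetric, with $|\mathrm{PF}|=1$), and $I_H$ is minimally generated by exactly five binomials --- the type-$2$ analogue of the count in Bresinsky's Theorem~\ref{thm:Bresinsky}, obtained from the Buchsbaum--Eisenbud analysis of the minimal free resolution of $K[H]$ over $S=K[x_1,\dots,x_4]$, which has length $3$ and last Betti number equal to the Cohen--Macaulay type $2$. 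For each $i$ let $c_i$ be the least positive integer with $c_ia_i\in\langle a_j:j\neq i\rangle$, giving binomials $f_i=x_i^{c_i}-m_i\in I_H$ with $x_i\nmid m_i$. To determine the $m_i$ and the remaining fifth relation, one examines the \emph{row-factorization matrices} of the two pseudo-Frobenius numbers: for $f\in\{g,g/2\}$ these are $4\times4$ integer matrices $M$ with $M_{ii}=-1$, $M_{ij}\geq0$ for $j\neq i$, and $f=-a_i+\sum_{j}M_{ij}a_j$ on every row $i$. Since there are only two pseudo-Frobenius numbers, these matrices are forced into a rigid shape; reading off their entries produces the four displayed formulas for $a_1,\dots,a_4$ in terms of $c_1,\dots,c_4,\alpha_{21}$ together with the displayed binomials $f_1,\dots,f_5$, and a support/degree argument shows this five-element generating set is minimal.

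\textbf{Sufficiency.} Conversely, given integers $c_i>1$ and $0<\alpha_{21}<c_1-1$ such that the resulting $a_1,\dots,a_4$ satisfy $\gcd(a_1,\dots,a_4)=1$, set $H=\langle a_1,\dots,a_4\rangle$ and $I=(f_1,\dots,f_5)$; a direct computation gives $I\subseteq I_H$. Treating the five binomials as rewriting rules, one checks that every residue class modulo $a_1$ has a unique standard monomial representative in the variables $x_2,x_3,x_4$, which describes the Ap\'ery set $\mathrm{Ap}(H,a_1)$ explicitly; it has exactly two $\leq_H$-maximal elements, and subtracting $a_1$ from them returns precisely $g/2$ and $g$, so $H$ is pseudo-symmetric. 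Finally $S/I$ is a one-dimensional Cohen--Macaulay ring of multiplicity $a_1$ (pass modulo the nonzerodivisor $x_1$ and count the remaining monomials), so the surjection $S/I\twoheadrightarrow K[H]$ must be an isomorphism, giving $I=I_H$ and the stated presentation.

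\textbf{Main obstacle.} The delicate step is the necessity direction: one must exclude every smaller generating set of $I_H$ and show that the monomials $m_i$ can be neither pure powers nor supported on a different choice of variables. This is exactly where the type-$2$ hypothesis enters essentially --- there being only two row-factorization matrices to reconcile --- and it is the technical heart of both Komeda's original proof and the treatment in \cite{BFS}.
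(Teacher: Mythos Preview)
The paper does not prove this theorem: it is quoted verbatim as a result of Komeda (in the formulation of Barucci--Fr\"oberg--\c Sahin) and used as a black box in the subsequent proposition on quadratic pseudo-symmetric $4$-semigroups. There is therefore no ``paper's own proof'' to compare your proposal against.

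As for your sketch itself, it is a plausible outline but remains just that. A few remarks. First, the terminology of \emph{row-factorization matrices} is anachronistic relative to Komeda's 1982 paper; his argument proceeds by a direct case analysis of the Ap\'ery set and the relations among the $c_i$, not via that formalism. Second, your claim that ``$I_H$ is minimally generated by exactly five binomials'' for a type-$2$, embedding-dimension-$4$ semigroup does not follow from Buchsbaum--Eisenbud alone in the way you suggest: the last Betti number being $2$ constrains the shape of the resolution, but pinning $\mu(I_H)=5$ and the precise monomial supports of the $f_i$ is the genuinely hard combinatorial work, as you acknowledge under ``Main obstacle.'' Third, in the sufficiency direction your sentence ``$S/I$ is a one-dimensional Cohen--Macaulay ring'' is asserted rather than proved; showing that $x_1$ is a nonzerodivisor on $S/I$ (equivalently that the five binomials already cut out a curve) is not automatic and is part of what needs checking. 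None of this is wrong in spirit, but the proposal is a roadmap rather than a proof; if you intend to include an argument here you should either flesh out these steps or, as the paper does, simply cite Komeda and \cite{BFS}.
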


The quadratic pseudo-symmetric $4$-semigroups are described by the following result.

\begin{Proposition}
Let $H=\langle a_1, a_2, a_3,a_4\rangle$ be  a pseudo-symmetric numerical semigroup.
The following are equivalent:
\begin{enumerate}
\item [{\em (a)}] $H$ is Koszul;
\item [{\em (b)}] $H$ is quadratic;
\item [{\em (c)}] $H=\langle 5, 3 a+ b+1, 3b-a-2, a+2b+2\rangle$ for some integers $0<a< b-1$ such that $3a+b+1$ is not a multiple of  $5$.
\end{enumerate}
\end{Proposition}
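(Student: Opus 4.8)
The plan is to mirror the proof of Theorem~\ref{thm:4-quadr-gore}, using Komeda's Theorem~\ref{thm:Komeda} in place of Bresinsky's. The implication (a)$\Rightarrow$(b) is immediate, since a Koszul algebra has a quadratic defining ideal.

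For (b)$\Rightarrow$(c) I would write a quadratic pseudo-symmetric $H=\langle a_1,a_2,a_3,a_4\rangle$ in the form of Theorem~\ref{thm:Komeda}, with invariants $c_i$ and $\alpha_{21}$. The constraint $0<\alpha_{21}<c_1-1$ forces $c_1\geq 3$; since $c_1$ is the least power with $c_1a_1$ a sum of the other generators, $2a_1$ is not such a sum, so Lemma~\ref{lemma:quad-semi}(b) (in contrapositive form) gives $a_1=e(H)$, and then the same lemma forces $c_2=c_3=c_4=2$. Substituting into Komeda's formulas yields $a_1=c_2c_3(c_4-1)+1=5$ and, with $a:=\alpha_{21}$ and $b:=c_1$,
\[
H=\langle 5,\ 3a+b+1,\ 3b-a-2,\ a+2b+2\rangle ,\qquad 0<a<b-1 .
\]
Finally, since $a_1=5$, the integer $\gcd(a_1,a_2,a_3,a_4)$ is $1$ or $5$, and a short computation modulo $5$ shows that $5\mid 3a+b+1$ forces $5\mid 3b-a-2$ and $5\mid a+2b+2$; hence this gcd is $1$ exactly when $3a+b+1$ is not a multiple of $5$, which is the condition in (c).

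For (c)$\Rightarrow$(a) I would run the argument in reverse. Given $H=\langle 5,\ 3a+b+1,\ 3b-a-2,\ a+2b+2\rangle$ with $0<a<b-1$ and $5\nmid 3a+b+1$, the data $c_1=b$, $\alpha_{21}=a$, $c_2=c_3=c_4=2$ satisfy all hypotheses of Theorem~\ref{thm:Komeda} (the gcd condition is the computation just made), so $H$ is pseudo-symmetric and $I_H=(f_1,\dots,f_5)$ with
\[
f_1=x_1^{b}-x_3x_4,\quad f_2=x_2^2-x_1^{a}x_4,\quad f_3=x_3^2-x_1^{\,b-a-1}x_2,\quad f_4=x_4^2-x_1x_2x_3,\quad f_5=x_1^{a+1}x_3-x_2x_4 .
\]
Because $a\geq 1$, $b\geq 3$, and $b-a-1\geq 1$, reduction modulo $x_1$ gives the monomial ideal $\overline{I_H}=(x_3x_4,x_2^2,x_3^2,x_4^2,x_2x_4)$, whose generators form a standard basis and lift to $f_1,\dots,f_5$ with unchanged initial degree (all equal to $2$). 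By the lifting criterion of \cite[Lemma 1.2]{HeS} (see also \cite[Theorem 1]{He-reg}), $x_1$ is regular on $S/I_H^*$, the $f_i$ form a standard basis of $I_H$, and $I_H^*=(x_3x_4,\ g_2,\ g_3,\ x_4^2,\ x_2x_4)$, where $g_2=x_2^2$ unless $a=1$ (in which case $g_2=x_2^2-x_1x_4$) and $g_3=x_3^2$ unless $b=a+2$ (in which case $g_3=x_3^2-x_1x_2$). In each of these at most four cases, a Buchberger check — whose only non-trivial $S$-pairs are $(g_2,x_2x_4)$ and $(g_3,x_3x_4)$, reducing respectively through $-x_1x_4^2$ and $-x_1x_2x_4$ to $0$ — shows that these generators form a quadratic Gr\"obner basis of $I_H^*$ with respect to the degree reverse lexicographic order induced by $x_4>x_3>x_2>x_1$. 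Hence $\gr_\mm K[H]=S/I_H^*$ is $G$-quadratic, in particular Koszul. (As a sanity check, regularity of $x_1$ gives $e(H)=\ell\bigl(K[x_2,x_3,x_4]/(x_3x_4,x_2^2,x_3^2,x_4^2,x_2x_4)\bigr)=5$, consistent with the parametrization.)

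The main obstacle is the bookkeeping in (b)$\Rightarrow$(c): one must make sure that Komeda's exponents $c_i$ coincide with the invariants of Lemma~\ref{lemma:quad-semi} — that is, with the least powers $c$ for which $ca_i$ is a sum of the other generators — so that $c_1\geq 3$ genuinely rules out $a_1$ being a non-minimal generator and pins down $c_2=c_3=c_4=2$. After that, the substitution into Komeda's formulas, the reformulation of the gcd condition, and the Gr\"obner basis verification across the subcases $a=1$ and $b=a+2$ are all routine.
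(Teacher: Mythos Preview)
Your proposal is correct and follows essentially the same route as the paper. The one substantive difference is in the final step of (c)$\Rightarrow$(a): after using the lifting criterion of \cite[Lemma~1.2]{HeS} to see that $x_1$ is regular on $S/I_H^*$ and that $f_1,\dots,f_5$ form a standard basis, the paper does \emph{not} compute $I_H^*$ case-by-case; instead it passes to the quotient $(S/I_H^*)/x_1(S/I_H^*)\cong K[x_2,x_3,x_4]/(x_3x_4,x_2^2,x_3^2,x_4^2,x_2x_4)$, observes this is Koszul, and invokes the Backelin--Fr\"oberg lemma \cite[Lemma~2]{BF-poincare} on the regular degree-one element $x_1$ to conclude that $S/I_H^*$ is Koszul. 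Your explicit Buchberger check across the subcases $a=1$ and $b=a+2$ is more hands-on but buys you the slightly stronger conclusion that $H$ is $G$-quadratic, matching the paper's treatment of the symmetric case in Theorem~\ref{thm:4-quadr-gore}. Your flagged ``main obstacle'' --- that Komeda's $c_i$ agree with the minimal exponents of Lemma~\ref{lemma:quad-semi} --- is genuine but is left implicit in the paper's proof as well.
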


\begin{proof}
For   (b)\implies (c), by Lemma \ref{lemma:quad-semi} and the restriction $0<\alpha_{21}<c_1-1$ in Komeda's Theorem \ref{thm:Komeda},
we get that $e(H)=a_1$ and $c_2=c_3=c_4=2$.
This gives
$a_1=5$, $a_2= 3 \alpha_{21}+ c_1+1$, $a_3= 3c_1-\alpha_{21}-2$, $a_4= 2c_1+ \alpha_{21}+2$.
Letting $a=\alpha_{21}$ and $b=c_1$ we obtain  the desired description.

Note that $3 a_2 \equiv a_3 \mod  5$ and $2 a_2 \equiv a_4 \mod  5$. Therefore $\gcd(a_1, a_2, a_3, a_4)=1$ precisely when $3a+b+1$ is not a multiple of  $5$.

For  (c) \implies (a), by Komeda's theorem we have
$$
I_H= (x_1^{b}-x_3 x_4,\ x_2^2-x_1^{a}x_4,\ x_3^2-x_1^{b-a-1}x_2,\ x_4^2-x_1x_2x_3,\ x_3 x_1^{a+1}-x_2 x_4).
$$
Modulo $x_1$ it becomes
$\bar{I}_H=( x_3 x_4, x_2^2, x_3^2, x_4^2, x_2 x_4)$.
These monomials can be lifted to polynomials in  $I_H$ with the same initial degree, hence by using the criterion in \cite[Lemma 1.2]{HeS},
$I_H$ is generated by a standard basis and $x_1$ is a regular element (of degree 1) on $S/I_H^*$.
From here we notice that $I_H^*$ is generated in degree $2$.
Since $(S/I_H^*)/x_1(S/I_H^*)\cong K[x_2,x_3,x_4]/( x_3 x_4, x_2^2, x_3^2, x_4^2, x_2 x_4)$  which is Koszul,
by \cite[Lemma 2]{BF-poincare},  $S/I^*_H$ is Koszul, as well.
\end{proof}

\medskip
{\bf Acknowledgement}.
The use of CoCoA \cite{Cocoa} and Singular \cite{Sing} was vital for the development of this paper. We wish to thank their respective teams of developers.

We thank M.E.~Rossi and G.~Valla for pointing us towards their work in \cite{RV} which gives evidence to support our Question \ref{que:forbidden-aci},
and we also thank M.E.~Rossi for suggesting Remark \ref{rem:cdnr}.

The second author was  supported by  a grant of the
Romanian Ministry of Education, CNCS--UEFISCDI, project number PN-II-RU-PD-2012-3--0656.
Important progress on this research took place  during the authors' visits to the co-author's home institution.
We are grateful for their hospitality.

\medskip

{}

\end{document}